%
%
%

\documentclass{article}

\usepackage[a4paper,
 total={140mm,237mm},
 left=35mm,
 top=30mm,
 ]{geometry}

%
%
%

\usepackage[T1]{fontenc}
\usepackage[utf8]{inputenc}
\usepackage{mathpazo}
\usepackage[hidelinks]{hyperref}
\usepackage{doi}
\usepackage{relsize}

\usepackage{amssymb}
\usepackage{amsmath}
\usepackage[dvipsnames]{xcolor}
\usepackage[english]{babel}
\usepackage{amsthm}
\usepackage{enumitem}
\usepackage{tikz}
\usetikzlibrary{arrows}
\usetikzlibrary{calc}
\usepackage{bm}
\usepackage{csquotes}

\usepackage[shortcuts]{extdash}
\usepackage{authblk}
\usepackage{rotating}
\usepackage{caption}
\usepackage{tabularx}
\usepackage{multirow}

\usetikzlibrary {shapes.misc}

\usepackage{extarrows}

\usetikzlibrary{arrows.meta}
\usepackage[shortcuts]{extdash}

\usepackage[format=plain,
            labelfont=it,
            textfont=it]{caption}
\usepgflibrary{shapes.geometric}
\usepackage{float}
\allowdisplaybreaks

\usepackage[maxnames=20,backend=biber, url=false, doi=false, eprint=false]{biblatex}

\addbibresource{references.bib}


\newtheorem{theorem}{Theorem}[section]

\newtheorem{corollary}[theorem]{Corollary}

\newtheorem{lemma}[theorem]{Lemma}
\newtheorem{proposition}[theorem]{Proposition}

\theoremstyle{definition}
\newtheorem{example}[theorem]{Example}
\newtheorem{definition}[theorem]{Definition}
\newtheorem{question}[theorem]{Question}

%

%
%
\hyphenation{Za-risky}
%

\usepackage{rotating}
\usepackage{tikz}
\usepackage{caption}
\usepackage{tabularx}
\usepackage{multirow}
\usepackage{bm}

\usetikzlibrary {shapes.misc}
\usetikzlibrary{arrows}

\usepackage{extarrows}

\usetikzlibrary{arrows.meta}
\usepackage[shortcuts]{extdash}

%

\setlength{\arrayrulewidth}{0.3mm}
\setlength{\tabcolsep}{3pt}
\newcommand{\ignore}[1]{}

\newtheorem{prop}[theorem]{Proposition}

\theoremstyle{remark}

\theoremstyle{definition}


\newcommand{\mic}[1]{\textcolor{black}{#1}}
\newcommand{\pau}[1]{\textcolor{black}{#1}}

\DeclareMathOperator{\ZZ}{Z}
\newcommand{\zar}{\tau_{\ZZ}}
\DeclareMathOperator{\ppww}{pw}
\newcommand{\pw}{\tau_{\ppww}}

\def\acts{\curvearrowright}

\DeclareMathOperator{\Pol}{Pol}
\DeclareMathOperator{\Aut}{Aut}
\DeclareMathOperator{\Autf}{Autf}
\DeclareMathOperator{\End}{End}
\DeclareMathOperator{\Emb}{Emb}
\DeclareMathOperator{\EEmb}{EEmb}
\DeclareMathOperator{\acl}{acl}
\newcommand{\sA}{\mathbb{A}}
\newcommand{\sB}{\mathbb{B}}
\newcommand{\sC}{\mathbb{C}}

\newcommand{\fullclone}{O}
\newcommand{\cP}{P}
\newcommand{\cS}{S}
\newcommand{\cT}{T}
\newcommand{\cC}{C}
\newcommand{\cD}{D}
\newcommand{\cU}{U}
\newcommand{\classC}{\mathcal{C}}
\newcommand{\cG}{{G}}
\newcommand{\cK}{\mathcal{K}}

\usetikzlibrary{decorations.markings}
\tikzset{middlearrow/.style={
        decoration={markings,
            mark= at position 0.6 with {\arrow{#1}} ,
        },
        postaction={decorate}
    }
}

\newcommand{\cclosure}[1]{\langle{#1}\rangle}
\newcommand{\tcl}[1]{\overline{#1}}


\newcommand{\caut}{\classC_\cG^\mathrm{ol}}
\newcommand{\cgroup}
{\classC_\cG^\mathrm{cl}}
\newcommand{\csecondgroup}
{\classC_\cG^\mathrm{2}}
\newcommand{\csecondeemb}
{\classC_{\tcl{\cG}}^\mathrm{2}}
\newcommand{\ceemb}{\classC_{\tcl{\cG}}^{\mathrm{ol}}}
\newcommand{\cend}
{\classC_\cT^\mathrm{ol}}
\newcommand{\csemi}
{\classC_\cT^\mathrm{cl}}
\newcommand{\ctcl}
{\classC_\cT^\mathrm{cl}}
\newcommand{\csecondsemi}
{\classC_\cT^\mathrm{2}}
\newcommand{\cpol}
{\classC_\cC^\mathrm{ol}}
\newcommand{\cclone}
{\classC_\cC^\mathrm{cl}}
\newcommand{\csecondclone}
{\classC_\cC^\mathrm{2}}

\newcommand{\csatgroup}
{\classC_\cG^\mathrm{sat}}
\newcommand{\csateemb}
{\classC_{\tcl{\cG}}^\mathrm{sat}}
\newcommand{\csatsemi}
{\classC_\cT^\mathrm{sat}}
\newcommand{\csatclone}
{\classC_\cC^\mathrm{sat}}

\newcommand{\cclgroup}{\classC_{\tcl{\cG}}^{\mathrm{cl}}}

\newcommand{\cnagroup}
{\classC_\cG^\mathrm{na}}
\newcommand{\cnaeemb}
{\classC_{\tcl{\cG}}^\mathrm{na}}
\newcommand{\cnasemi}
{\classC_\cT^\mathrm{na}}
\newcommand{\cnaclone}
{\classC_\cC^\mathrm{na}}

\newcommand{\cpolgroup}
{\classC_\cG^\mathrm{Po}}
\newcommand{\cpoleemb}
{\classC_{\tcl{\cG}}^\mathrm{Po}}
\newcommand{\cpolsemi}
{\classC_\cT^\mathrm{Po}}
\newcommand{\cpolclone}
{\classC_\cC^\mathrm{Po}}

\newcommand{\tgcl}
{\tau_{c}}

\newcommand{\somega}
{S_\Omega}
\newcommand{\omom}
{\Omega^\Omega}


\begin{document}

\title{A guide to topological reconstruction on endomorphism monoids and polymorphism clones}

\author[1]{Paolo Marimon}
\author[2]{Michael Pinsker}
\affil[1]{Department of Computer Science, University of Oxford. Oxford, UK.}
\affil[2]{Institut f\"{u}r Diskrete Mathematik \& Geometrie, TU Wien. Vienna, Austria.}

\maketitle
\begin{abstract}
{Various spaces of symmetries of a structure are naturally endowed with both an algebraic and a topological structure. For example, the automorphism group of a structure is, {on top of being a group,} 
 a topological group when equipped with the topology of pointwise convergence. In some cases, {the algebraic structure of such space alone is sufficiently rich} 
 to determine its topology (under some requirements on the topology). For automorphism groups, the problem of when this happens has been actively pursued over the last 40 years. {With the exception of some early work of Lascar, the analogue of this problem for endomorphism monoids and polymorphism clones has only received attention in the past 15 years.}
In this guide, we survey the current state of affairs in this relatively young line of research. {We moreover use this opportunity to polish several existing results and to extend them beyond what was hitherto known.}} 
\end{abstract}
\textbf{Keywords:} endomorphism monoid, polymorphism clone, topological reconstruction, Zariski topology, pointwise convergence topology, bi-interpretation, $\omega$\-/categoricity.

\section{Introduction}

Given a structure $\sA$ with domain $\Omega$, we may associate with it various spaces of symmetries: its automorphism group $\Aut(\sA)$, its monoids of elementary self-embeddings $\EEmb(\sA)$, of self-embeddings $\Emb(\sA)$, and of endomorphisms $\End(\sA)$, and its polymorphism clone $\Pol(\sA)$ (the space of homomorphisms from some finite power of $\sA$ into itself, see Section~\ref{sec:spacesofsymm}).\\

Each space of symmetries of $\sA$ is naturally endowed with the topology of pointwise convergence $\pw$. For the spaces which are subsets of  $\Omega^\Omega$ {(which include all but the last space
above)}, this is just the subspace topology induced by the product topology on $\Omega^\Omega$, where $\Omega$ is endowed with the discrete topology; in the case of $\Pol(\sA)$ it is the topology induced in the sum space $\fullclone:=\bigcup_{n\in\mathbb{N}}\Omega^{\Omega^n}$ of the spaces $\Omega^{\Omega^n}$ which again bear the product topology. 
It is folklore that the subgroups of the full symmetric group  $S_\Omega$ on $\Omega$ which are topologically closed in it  correspond to automorphism groups of first-order structures with domain $\Omega$. Analogously, closed submonoids of the full transformation monoid $\Omega^\Omega$ are precisely the  endomorphism monoids of first-order structures, 
and closed subclones of the full clone $\fullclone:=\bigcup_{n\in\mathbb{N}}\Omega^{\Omega^n}$ the  polymorphism clones of first-order structures.\\

When $\Omega$ is countable, as we will be assuming throughout 
this paper,  closed submonoids of $\Omega^\Omega$ 
are Polish topological semigroups. That is, they are separable and completely metrizable (i.e. Polish) and the semigroup operation is continuous. Similarly, closed subgroups of $S_\Omega$ are Polish topological groups (that is, both group multiplication and inversion are continuous), and closed subclones of $\fullclone$ are Polish topological clones (that is, composition of functions is continuous).\\

The picture described above raises two natural 
questions:
\begin{question}\label{q:reconstruction} What information can we recover about the original structure $\sA$ from a given space of symmetries (thought of as a topological group / monoid / clone)?
\end{question}
\begin{question}\label{q:algebratotopology} To what extent does the algebraic structure of a space of symmetries determine its  topological structure?  
\end{question}
Regarding Question~\ref{q:reconstruction}, we have an extremely satisfying answer when $\sA$ belongs to the class of $\omega$-categorical structures, which includes e.g.~$(\mathbb{N}; =)$, $(\mathbb{Q};<)$, the random graph, countable vector spaces over finite fields, and the countable atomless Boolean algebra $B_\infty$. 
In that case, its spaces of symmetries 
recover the original structure up to certain notions of bi-interpretation~\cite{AhlbrandtZiegler, BodJunker, Topo-Birk}. In particular, $\Aut(\sA)$ and $\EEmb(\sA)$ recover $\sA$ up to bi-interpretability, $\Pol(\sA)$ recovers $\sA$ up to the stronger notion of primitive positive bi-interpretability, and $\End(\sA)$ recovers it up to the intermediate notion of existential positive bi-interpretability (modulo some minor caveats, see Theorem~\ref{thm:biinterpretations}). Moreover, by a  result  of~\cite{Rubin}, restricting our attention to $\omega$-categorical structures with no algebraicity, their spaces of symmetries with the topologies of pointwise convergence recover the original structure even up to bi-definability \mic{(see also~\cite{RomanMichael})}.\\


This fact makes Question~\ref{q:algebratotopology} even more inviting in 
the $\omega$-categorical 
context. There are several ways of making Question~\ref{q:algebratotopology} precise, which gives  rise to different notions of ``topological reconstruction''. Let us focus for a moment on a closed transformation monoid $\cS$ and on a class of topological semigroups $\classC$. We say that $\cS$ is \textbf{reconstructible} with respect to $\classC$ if whenever there is an algebraic isomorphism $\alpha:\cS\to\cT\in\classC$, there is also a topological isomorphism $\xi:\cS\to\cT$. If, instead, we ask the topological isomorphism between $\cS$ and $\cT$ is already given by $\alpha$, then we say that $\cS$ has \textbf{automatic homeomorphicity} (AH). Automatic homeomorphicity with respect to the class of Polish semigroups is known as the \textbf{Unique Polish Property} (UPP) since it is equivalent to  there being a unique Polish semigroup topology on $\cS$. Finally, one might ask that \mic{all semigroup homomorphisms} from $\cS$ to a member $\cT$ of $\classC$ are continuous. This property is known as \textbf{automatic continuity} (AC). All notions above can be defined analogously for topological groups and topological clones.\\

For  automorphism groups of $\omega$-categorical structures, these notions of reconstruction have been heavily studied in model theory, group theory, and topological dynamics. Reconstruction of $\Aut(\sA)$ with respect to the class $\caut$ of automorphism groups of $\omega$-categorical structures (also known as closed oligomorphic permutation groups) is equivalent to being able to recover $\sA$ from $\Aut(\sA)$ up to bi-interpretation. And there are well-established (though not fully general) methods for proving properties that imply automatic homeomorphicity of $\Aut(\sA)$ with respect to the class $\cgroup$ of closed subgroups of $S_\Omega$. 
A notable fact for groups is that by the Open Mapping Theorem for Polish groups~\cite[Theorem 1.2.6]{BeckerKechris}, 
if $\Aut(\sA)$ has automatic continuity with respect to a given class of Polish groups, then it also has automatic homeomorphicity with respect to it; in particular, this applies to $\cgroup$. 
Automatic continuity with respect to $\cgroup$ can be equivalently characterised by the \textbf{small index property} (SIP): a Polish group has the small index property if every subgroup of index $\leq\aleph_0$ is open.\footnote{{Some authors define the SIP by asking that subgroups of index $<2^{\aleph_0}$ are open~\cite{MacphersonSurvey}. By \mic{the Baire category theorem}, every open subgroup of a Polish group has index $\leq\aleph_0$, which is why we use the former definition for the purposes of automatic continuity.}} 
The small index property has been proved for several natural classes of $\omega$-categorical structures: $S_\Omega$, $\Aut(\pau{\mathbb{Q}; <)}$, the automorphism group of the countable atomless Boolean algebra, the automorphism groups of all $\omega$-categorical $\omega$-stable structures and of the random graph~\cite{DixonNeumannThomas, Truss, kwiatkowska2012group, HodgesHodkinsonLascarShelah}. With the exception of $\pau{\Aut(\mathbb{Q}; <)}$, the mentioned groups have been shown to have a stronger property than SIP, known as  \textbf{ample generics}, which implies automatic continuity with respect to the class $\csecondgroup$ of second countable topological groups~\cite{KechrisRosendal}. Finally, showing that $\Aut(\sA)$ has a \textbf{weak $\forall\exists$-interpretation} (in the sense of~\cite{Rubin}) also implies that $\Aut(\sA)$ has automatic homeomorphicity with respect to $\cgroup$. This latter method applies to several structures for which the SIP is not known (such as the generic poset), or does not hold (such as the Cherlin-Hrushovski structure, cf.~Definition~\ref{def:CH})~\cite{MacphersonBarbina}. \\




Moving from automorphism groups to monoids of elementary embeddings, endomorphism monoids, and polymorphism clones, the way various notions of reconstruction interact takes several unexpected turns. The point of this survey is to help the reader navigate this sea of notions of topological reconstruction and the techniques that have been developed in this field for spaces of symmetries other than automorphism groups. In the process of doing so, we polished several existing results in the literature to clarify the picture of the current state of the art, and we also prove the significant transfer result (Theorem~\ref{thm:mainthm}) that, for $\sA$ a countable $\omega$-categorical structure, automatic homeomorphicity of $\Aut(\sA)$ with respect to $\cgroup$ implies automatic homeomorphicity of $\EEmb(\sA)$ with respect to the class $\csemi$ of closed transformation monoids on $\Omega$.\\

\begin{figure}[t] 
\centering

\begin{tikzpicture}[scale=1.1]

  \node (a0) at (0,0) {$\cS$ has AC wrt.};
  \node (a1) at (2,0) {$\cend$};
  \node[anchor=east] at (2.9,1) {(10)};
  \node (a2) at (4,0) {$\csemi$};
  \node (a3) at (8,0) {$\csecondsemi$};
    \node[anchor=west] at (7.2,1) {(11)};
  \draw[-implies, double, thick] (a2) to (a1);
    \draw[-implies, double, thick] (a3) to (a2);

      \node (b0) at (0,2) {$\cS$ has AH wrt.};
  \node (b1) at (2,2) {$\cend$};
  \node (b2) at (4,2) {$\csemi$};
  \node (b3) at (6,2) {\pau{$\cpolsemi$ (UPP)}};
  \draw[-implies, double, thick] (b2) to (b1);
    \draw[-implies, double, thick] (b3) to (b2);
       \draw[middlearrow={Rays}, -implies, double, thick, dashed] (b1)  to  (a2);
    \draw[middlearrow={Rays},-implies, double, thick, dashed] (b3) to (a3);

     \node (c0) at (0,4) {$\overline{G}$ has AH wrt.};
  \node (c1) at (2,4) {$\cend$};
    \node[anchor=east] at (2,5) {(7)};
\node[anchor=west] at (4,5) {(7)};
    
  \node (c2) at (4,4) {$\csemi$};
  \draw[-implies, double, thick] (c2) to (c1);

   \node (d0) at (0,6) {$G$ has AH wrt.};
  \node (d1) at (2,6) {$\caut$};
  \node (d2) at (4,6) {$\cgroup$};
  \node (d3) at (6,6) {\pau{$\cpolgroup$ (UPP)}};
  \draw[-implies, double, thick] (d2) to (d1);
    \draw[-implies, double, thick] (d3) to (d2);
      \draw[-implies, double, thick] (d1) to (c1);
      \draw[-implies, double, thick] (d2) to (c2);

      \node (e0) at (0,8) {$G$ has AC wrt.};
  \node (e1) at (2,8) {$\caut$};
  \node (e2) at (4,8) {$\cgroup\pau{\Leftrightarrow}$SIP};
    \node (e3) at (8,8) {$\csecondgroup\pau{\Leftrightarrow}\csecondsemi$};
        \node at (8,8.3) {(2)};
       \node(f4) at (10,9) {ample generics};
        \node[anchor=east] at (9.1,8.7) {(3)};
       \node (f3) at (5.5,7) {weak $\forall\exists$-int.};
        \node[anchor=east] at (2,7) {\pau{(4)}};
         \node[anchor=west] at (4,7) {\pau{(4)}};
        \node at (5.3,7.5) {(5)};
        \node at (3.2,7) {(5)};
        \node at (5.3,6.5) {(6)};
          \node[anchor=west] at (7,4) {(8)};
            \node[anchor=west] at (9,4) {(8)};
    
  \draw[-implies, double, thick] (e2) to (e1);
        \draw[-implies, double, thick] (e3) to (e2);
         \draw[-implies, double, thick] (f4) to (e3);
      \draw[-implies, double, thick] (e1) to (d1);
      \draw[-implies, double, thick] (e2) to (d2);
      \draw[-implies, double, thick] (e3) to (d3);
        \draw[-implies, double, thick] (f3) to (d2);
        \draw[middlearrow={Rays},implies-implies, double, thick, dashed] (f3) to (e2);
         \draw[middlearrow={Rays},-implies, double, thick, dashed] (d2) to[bend left] (e2);

         \draw[middlearrow={Rays},-implies, double, thick, dashed] (f4) to[bend left] (b1);
         \draw[middlearrow={Rays},-implies, double, thick, dashed] (f4) to (a3);

\draw[-implies, double, thick, dotted] (e2)  to[bend left] node[above, midway] {? (1)} (e3);

\draw[-implies, double, thick, dotted] (a1)  to node[left, midway] {? (9)} (b1);
\draw[-implies, double, thick, dotted] (a2)  to node[right, midway] {? (9)} (b2);
\node[align=left, anchor=west] at (-1,-2) {\scriptsize (1) \cite[Question 4.7]{EJMMMP-zariski};\\
\scriptsize (2) \cite[Proposition 4.1]{EJMMMP-zariski};\\
\scriptsize (3) \cite[Theorem 6.24]{KechrisRosendal};\\
\scriptsize \pau{(4) Open Mapping Theorem for Polish groups;}\\
\scriptsize (5) \cite[Theorem 4.4]{MacphersonBarbina};\\
\scriptsize (6) \cite[Proposition 2.9]{Lascar};\\
};
\node[align=left, anchor=west] at (5.5,-2) {\scriptsize(7) Theorem~\ref{thm:mainthm};\\
\scriptsize (8) Theorem~\ref{thm:ACdoesnotimplyAH}\\
\scriptsize (9) Question~\ref{q:acandah}; \\
\scriptsize (10) \cite[Corollary 10]{Reconstruction};\\
\scriptsize (11) \cite[Proposition 2.3]{pinsker2023semigroup};\\
};
\end{tikzpicture}

\caption{Implications amongst automatic continuity (AC) and automatic homeomorphicity (AH) notions with respect to different classes, with the relevant citations. Above, $G$ indicates the automorphism group of an $\omega$-categorical structure, $\overline{G}$ its monoid of elementary embeddings, and $\cS$ its endomorphism monoid.}
\label{fig:AHandAC}
\end{figure}
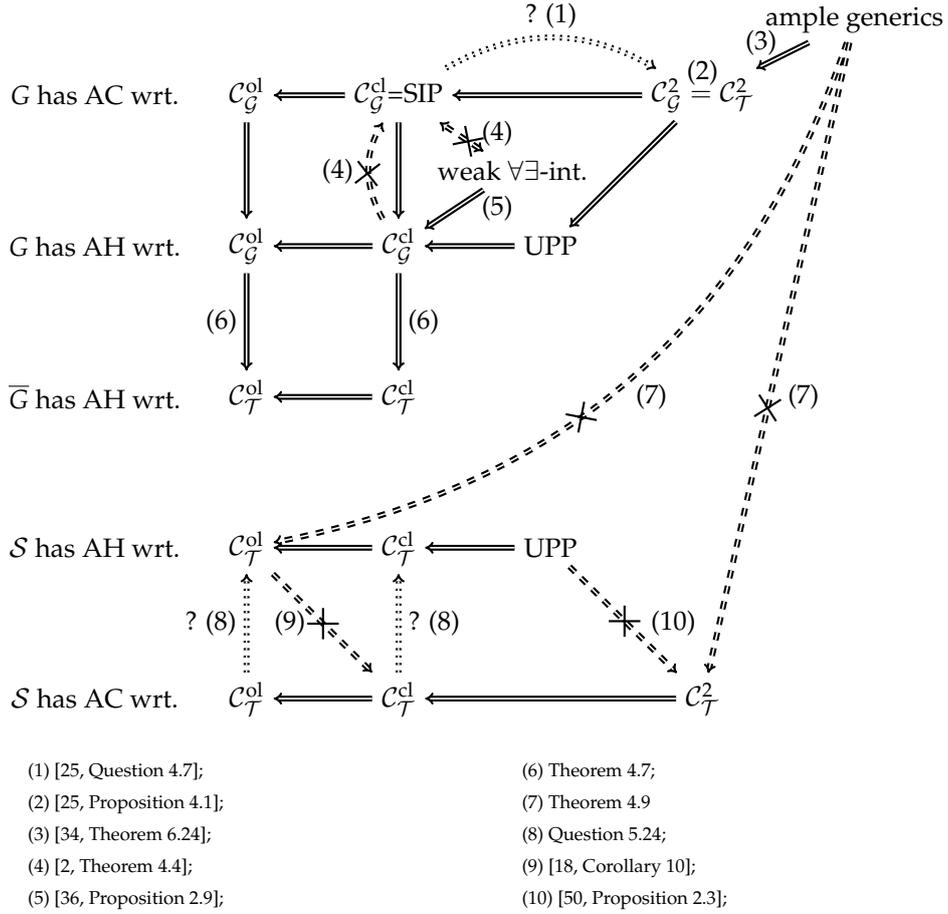

Roughly, the picture we will paint for monoids of elementary embeddings and endomorphism monoids is as follows: very little is known about reconstructibility (wrt $\csemi$) as a separate notion from automatic homeomorphicity (wrt $\csemi$). The only known $\omega$-categorical  counterexample to reconstruction is the same for automorphism groups, monoids of elementary embeddings, endomorphism monoids, and polymorphism clones~\cite{BodirskyEvansKompatscherPinsker}, and comes from a classical construction by~\cite{EvansHewitt}. Regarding automatic homeomorphicity and automatic continuity, we summarise the current situation in Figure~\ref{fig:AHandAC}. Notably, there is no Open Mapping Theorem for Polish semigroups, and hence automatic continuity and homeomorphicity are formally incomparable notions \mic{(and actually distinct~\cite{EJMMMP-zariski}).} The high connectivity of the top part of the figure displays the main result of this paper (Theorem~\ref{thm:mainthm}). Nevertheless, the reader should not be fooled by what is omitted from the figure: no monoid of  elementary embeddings of an $\omega$-categorical structure has UPP or automatic continuity with respect to $\csemi$. Meanwhile, the bottom of the figure is highly disconnected from the top half, with the only implications being the obvious ones. In particular, no matter how strong the reconstruction properties of $\Aut(\sA)$ are, $\End(\sA)$ may fail to have automatic homeomorphicity even with respect to itself. This is the result of our efforts in polishing previous results of~\cite{Reconstruction}.\\

Despite the disconnect  between the bottom and top part of Figure~\ref{fig:AHandAC}, several techniques have been developed to prove versions of automatic homeomorphicity for endomorphism monoids (and polymorphism clones). \mic{On one hand, there is a body of 
%
results which rely on lifting 
automatic homeomorphicity from $\Aut(\sA)$ to $\End(\sA)$, passing through $\EEmb(\sA)$.} This in particular includes work of~\cite{behrisch-truss-vargas, truss2021reconstructing, behrisch-vargasgarcia-stronger, EJMMMP-zariski} and previous work on monoids of elementary embeddings now subsumed by Theorem~\ref{thm:mainthm}~\cite{pech-saturated, Reconstruction, behrisch-truss-vargas, truss2021reconstructing}. On the other hand, the seminal paper~\cite{EJMMMP-zariski} introduced several new ideas and techniques which are especially suited to the study of semigroups and clones and which do not  (necessarily) rely on upwards transfer from the study of some underlying topological group. In particular, the authors introduce \textbf{Property X}, a frequently occurring property for semigroups that can be used to deduce maximality of the topology of pointwise convergence amongst Polish semigroup topologies. Property X can then be mixed with the study of the \textbf{Zariski topology} $\zar$, a topology defined through purely intrinsic information on an algebraic semigroup, which, if shown to coincide with $\pw$, implies that $\pw$ is minimal amongst Hausdorff semigroup topologies. This yields a route to prove UPP without relying on any form of reconstruction for an underlying group which is often fruitful since $\pw=\zar$ on $\End(\sA)$ under fairly general conditions~\cite{PINSKER_SCHINDLER_2023, EJMPP, new}. The latter fact is notable since it stands in stark contrast with the situation for groups, where the Zariski topology  tends to be less well-behaved and usually distinct from $\pw$~\cite{ghadernezhad2024group, bardyla2025note}.\\

Indeed, the work of~\cite{PINSKER_SCHINDLER_2023} in combination with~\cite{Rubin} (see~\cite{RomanMichael}) yields another remarkable fact that we note in this survey: monoids of elementary embeddings of $\omega$-categorical structures with no algebraicity have \textbf{automatic action compatibility} in the sense that any algebraic isomorphism between two of them is induced from a bijection between their underlying domains. The analogue of this statement for automorphism groups 
is wide open, with substantial works in the field proving it under additional assumptions 
~\cite{Rubin, PaoliniShelahReconstructing, MacphersonBarbina}.\\

In spite of the growing body of available techniques, we are still far from any form of complete picture 
for topological reconstruction of endomorphism monoids or polymorphism clones. Throughout this guide, we include several open questions which we believe may help the field grow.


\section{Preliminaries}
We will assume some basic knowledge of model theory, such as the first five chapters of~\cite{Tent-Ziegler}. For more on $\omega$-categorical structures and topological reconstruction in the context of automorphism groups, we refer the reader to the survey~\cite{MacphersonSurvey}.

\subsection{Spaces of symmetries and their topologies}\label{sec:spacesofsymm}

Let $\Omega$ be a countable set.  We endow $\Omega^\Omega$ with the topology of pointwise convergence $\pw$ given by the product topology on $\Omega^\Omega$, where $\Omega$ is assigned the discrete topology. We write $\fullclone^{(1)}:=\Omega^\Omega$ for the \textbf{full transformation monoid} on $\Omega$, where the monoid operation is composition of functions. When we talk about the topology of pointwise convergence $\pw$ on any subspace of $\fullclone^{(1)}$ we mean the induced subspace topology. The monoid $\fullclone^{(1)}$ with the topology $\pw$ is a \textbf{topological semigroup} in the sense that the semigroup operation is continuous. Moreover, $\fullclone^{(1)}$ is Polish, i.e.~separable and completely metrizable. Closed submonoids of $\fullclone^{(1)}$ are still topological semigroups with $\pw$. The full symmetric group $S_\Omega$ with $\pw$ is a  Polish \textbf{topological group} (i.e.~
composition and inversion are continuous), and any of its 
subgroups which is closed in $S_\Omega$ is a Polish topological group. 

For $n\geq 1$, let $\fullclone^{(n)}=\Omega^{{\Omega}^n}$ be the set of functions $f:\Omega^n\to \Omega$, and $\fullclone:=\bigcup_{n\in\mathbb{N}} \fullclone^{(n)}$. We call $\fullclone$ the \textbf{full clone} on $\Omega$. Analogously to the previous paragraph, we endow each $\fullclone^{(n)}$ with the product topology and endow $\fullclone$ with the sum topology.
This defines the topology of pointwise convergence $\pw$ on $\fullclone$. Note that $\pw$ has a \mic{subbasis} of open sets $\{\cU_{(\overline{a}, b)}\vert \ \overline{a}\in\Omega^n, b\in\Omega, n\geq 1\}$, where for $\overline{a}\in\Omega^{n}$, $b\in\Omega$, 
\[\cU_{(\overline{a}, b)}:=\{f\in\fullclone^{(n)}\vert \ f(\overline{a})=b\}\;.\]
Similarly, $\{\cU_{(a, b)}\vert \ a, b\in\Omega\}$ forms a subbasis of open sets of $\pw$ on $\fullclone^{(1)}$. We often find it convenient to think of the notion of topological closure induced by $\pw$: given $W\subseteq\fullclone$ and $f\in\fullclone$, we say that $f\in\overline{W}$ if for all finite $F\subseteq\Omega$ there is some $g\in W$ such that $g_{\upharpoonright F}=f_{\upharpoonright F}$. Topological closure on $\fullclone^{(1)}$ is identical.

\begin{definition}
We call ${C}\subseteq \fullclone$ a \textbf{function clone} over \mic{$\Omega$} if
\begin{itemize}
    \item ${C}$ contains all projections:
   for each $1\leq i\leq k\in\mathbb{N}$, ${C}$ contains the $k$-ary projection to the $i$th coordinate $\pi_{i}^k\in\fullclone^{(k)}$, given by $(x_1, \dots, x_k)\mapsto x_i$;
    \item ${C}$ is closed under composition: for all $f\in{C}\cap\fullclone^{(n)}$ and all $g_1, \dots, g_n\in {C}\cap\fullclone^{(m)}$, $f(g_1, \dots, g_n)$, given by
    \[(x_1, \dots, x_m)\mapsto f(g_1(x_1, \dots, x_m), \dots, g_n(x_1, \dots, x_m)),\]
    is in ${C}\cap\fullclone^{(m)}$.
\end{itemize}   
\end{definition}
We say that a function clone is \textbf{closed} if it is closed in $\pw$ on $\fullclone$. Given a set of functions $W\subseteq\fullclone$, we denote by $\cclosure{W}$ the smallest closed function clone containing $W$. A closed function clone $\cC$ together with $\pw$ is a (Polish)  \textbf{topological clone} in the sense that each $\cC\cap \fullclone^{(n)}$ is clopen and each operation $\mathrm{comp}^n_m:\fullclone^{(n)}\times (\fullclone^{(m)})^n\to \fullclone^{(m)}$ given by $(f, g_1, \dots, g_n)\mapsto f(g_1, \dots, g_n)$ is continuous. 

\begin{definition} Let $\cC$ and $\cD$ be function clones \mic{on a set $\Omega$}. A \textbf{clone homomorphism} $\xi:\cC\to\cD$ is a map which preserves arities of functions, is \mic{the identity}
on the projections $\{\pi_i^k\vert 1\leq i\leq k\}$ and preserves composition of functions. 
\end{definition}

\begin{definition}
Let $\sA$ and $\sB$ be first-order structures in the same relational language. A map $f:\sA\to\sB$ is a \textbf{homomorphism} if it sends tuples in a relation of $\sA$ to tuples in the corresponding relation of $\sB$; it is an \textbf{embedding} if it is moreover injective and sends only tuples in a relation of $\sA$ to tuples in the corresponding relation of $\sB$; and it is an \textbf{elementary embedding} if it preserves the truth of arbitrary first-order formulas. \mic{We indicate that $\sA$ is an elementary substructure of $\sB$ (i.e.~the identity is an elementary embedding) by $\sA\preceq \sB$.}
 We write $\Aut(\sA)$, $\Emb(\sA)$, $\EEmb(\sA)$, and $\End(\sA)$ for, respectively, the automorphism group, the monoid of self-embeddings, the monoid of elementary self-embeddings, and the endomorphism monoid of $\sA$. 
By a \textbf{polymorphism} of $\sA$ we mean a map $\sA^n\to \sA$ for some $n\geq 1$ which preserves  each relation $R$ of $\sA$: 
    \[\text{ if }\begin{pmatrix}
    a^1_1\\
    \vdots\\
    a^1_k
\end{pmatrix}, \dots, 
\begin{pmatrix}
    a^n_1\\
    \vdots\\
    a^n_k
\end{pmatrix}
\in R \text{, then }
\begin{pmatrix}
    f(a^1_1, \dots, a^n_1)\\
    \vdots\\
    f(a^1_k, \dots, a^n_k)
\end{pmatrix}\in R.\]
Note that unary polymorphisms correspond to endomorphisms and that the polymorphisms of a relational structure form a closed subclone of $\fullclone$. We write $\Pol(\sA)$ for the polymorphism clone of $\sA$. 
\end{definition}

The following correspondences are easy to prove (allowing relational structures to be in an infinite language):
\begin{itemize}
    \item closed subgroups of $S_\Omega$ correspond to  automorphism groups of relational structures on $\Omega$;
    \item closed submonoids of $\fullclone^{(1)}$ correspond to endomorphism monoids of relational structures on $\Omega$;
    \item closed function clones on $\Omega$ correspond to polymorphism clones of relational structures on $\Omega$. 
\end{itemize}

\subsection{\texorpdfstring{$\omega$}{omega}-categorical structures and related notions}

\begin{definition} Let $\Omega$ be a countable set (we allow $\Omega$ to be finite). We say that a permutation group $G\acts \Omega$ is \textbf{oligomorphic} if $G\acts \Omega^n$ has finitely many orbits for each $n\in\mathbb{N}$ in its natural action $g\cdot (x_1, \dots, x_n)=(g x_1, \dots, g x_n)$ for $(x_1, \dots, x_n)\in \Omega^n$. A structure $\sA$ is $\bm{\omega}$\textbf{-categorical} if its automorphism group is oligomorphic. 
\end{definition}
With our definition, all finite structures are $\omega$-categorical. A large class of countably infinite $\omega$-categorical structures is given by homogeneous structures in a finite relational language (\textbf{finitely homogeneous} structures):
\pau{a countably infinite structure $\sA$ in a relational language is \textbf{homogeneous} if every isomorphism between finite substructures extends to an automorphism  of $\sA$.}
Homogeneous 
structures are in one-to-one correspondence with certain well-behaved classes of finite relational structures known as Fra\"{i}ss\'{e} classes. Given a Fra\"{i}ss\'{e} class $\cK$, the homogeneous structure whose \textbf{age} (i.e.~class of finite substructures) is $\cK$ is known as its \textbf{Fra\"{i}ss\'{e} limit}. We refer the reader to~\cite{MacphersonSurvey} for a survey on $\omega$-categorical and homogeneous structures. We usually name homogeneous structure after their age (for which they are the \textbf{generic} countable object). Thus, the generic $\mic{\mathbb{K}_n}$-free graph $\mic{\mathbb{H}_n}$ is the countable homogeneous graph whose age is the class of finite graphs which do not embed a copy of the complete graph $\mic{\mathbb{K}_n}$, the generic poset is the Fra\"{i}ss\'{e} limit of finite partially ordered sets, and so on.\\

Spaces of symmetries of $\omega$-categorical structures have natural characterisations in terms of their actions: closed oligomorphic subgroups of $S_\Omega$ correspond to the automorphism groups of $\omega$-categorical structures; closed transformation monoids on $\Omega^\Omega$ are called  oligomorphic
 if they contain an oligomorphic permutation group, and correspond to endomorphism monoids of $\omega$-categorical structures.  Moreover, monoids of elementary embeddings of $\omega$-categorical structures correspond precisely to closed oligomorphic transformation monoids with a dense set of invertibles. Similarly, polymorphism clones of $\omega$-categorical structures correspond to closed oligomorphic clones, i.e., closed clones whose unary operations are an oligomorphic transformation monoid.\\

\mic{Often, the following weakening of $\omega$-categoricity will be sufficient:}
\begin{definition}
    \pau{A countable structure $\sA$ is \textbf{saturated} if for each of its finite subsets $B$, $\sA$ realises all $1$-types over $B$.}
\end{definition}
In particular, $\omega$-categorical structures are saturated.
In countable saturated structures we have that $\EEmb(\sA)=\overline{\Aut(\sA)}$.\\

The latter fact provides one motivation for reconstructing the topology on the class 
 of closed submonoids $\cS$ of $\Omega^\Omega$ whose subgroup $G$ of invertibles is dense. 
 Further, monoids of this class are an essential intermediate step when lifting reconstruction results from automorphism groups to endomorphism monoids. Finally, if $G$ is even oligomorphic, such monoids are called \textbf{model-complete cores}, and they appear as an essential concept in various applications e.g.~in Ramsey theory or Constraint Satisfaction (see \cite{MottetPinskerCores, BKOPP-equations, Cores-journal, BodirskyRamsey}). The name of model-complete core was originally given to $\omega$-categorical structures whose endomorphism monoid has a dense set of invertibles; it was shown in~\cite{Cores-journal} that every $\omega$-categorical structure $\sA$ has a substructure $\sC$ which is a model-complete core and such that $\sA$ has a homomorphism into $\sC$, and $\sC$ is $\omega$-categorical and unique up to isomorphism~\cite{Cores-journal}. Closed clones whose monoid of  unary functions are a model-complete core are called \mic{\textbf{model-complete core clones}.}

Given a group $G\acts\Omega$ and  $B\subseteq \Omega$ finite, the \textbf{algebraic closure} of $B$, $\acl(B)$, is the set of all elements of $\Omega$ which have finite orbit under the action $G_{(B)}\acts\Omega$ of the pointwise stabiliser of $B$. We say that $G\acts \Omega$ has \textbf{locally finite algebraic closure} if the algebraic closure of any finite set is finite; {this is the case for all oligomorphic actions.}  Moreover, $G\acts\Omega$ has \textbf{no algebraicity} if for all finite $B\subseteq\Omega$, $\acl(B)=B$. {It is easy to see that this implies that {the center $Z(G)$} of $G$ is trivial.}\\


\mic{Some remarkable results of~\cite{Lascar, lascar1989demi} on monoids of elementary embeddings on a structure $\sA$ (with domain $\Omega$) rely on the assumption that $\sA$ 
is $\bm{G}$\textbf{-finite}:\footnote{\mic{We note that several papers on reconstruction use a different definition of $G$-finiteness as given  in~\cite[p.35]{lascar1989demi}. However, that definition is known to be equivalent to $G$-finiteness (as needed for the results of Lascar we mention) only under the additional assumption of $G$-compactness, which can fail even in the $\omega$-categorical setting~\cite{ivanov2010countably}. This confusion arose from the fact that~\cite{lascar1989demi} tacitly assumes model-theoretic stability, which implies $G$-compactness.}}
for every finite $X\subseteq\Omega$, there is some finite $Y\subseteq \Omega$ such that $\Aut(\sA)_{(Y)}=\Autf_{(X)}(\sA)$, where $\Autf_{(X)}(\sA)$ is the group of Lascar strong automorphisms fixing $X$:}
\begin{align*}
\Autf_{(X)}(\sA) :=&\{g\in\Aut(\sA)_{(X)}\vert \text{ there are } \sB\succeq \sA, \, X\subseteq\sB’\preceq\sB,  \text{ and } f\in\Aut(\sB)_{(\sB’)}\\ 
& \text{ such that } g=f|_{\sA}\}\:.
\end{align*}

As far as we know, there are no known examples of finitely homogeneous structures which are not $G$-finite. However, allowing for an infinite language it is easy to build such structures~\cite{lascar1982category}:

\begin{definition}\label{def:CH} By the Cherlin-Hrushovski structure $\sA_{\mathrm{CH}}$ we mean the homogeneous structure obtained as the Fra\"{i}ss\'{e} limit of finite structures with, for each $n\in\mathbb{N}$, a $2n$-ary equivalence relation $E_n$ on $n$-tuples, such that each $E_n$ has at most 
two equivalence classes.
\end{definition}

The Cherlin-Hrushovski structure also does not have the small index property~\cite{Lascar}.

\subsection{Interpretations}

One motivation for reconstructing the topology of various spaces of symmetries of an $\omega$-categorical structure is that one can then further reconstruct the structure itself up to various notions of interpretation.

\begin{definition}\label{def:interpretation} Let $\sA$ and $\sB$ be first-order structures 
\mic{with domains $A$ and $B$}. A  \textbf{first-order interpretation (fo-interpretation)} of $\sB$ in $\sA$ is a partial surjection $I:A^d\to B$ for some $d\in\mathbb{N}$ such that for each relation $R$ of $\sB$ defined by an atomic \mic{formula} 
$\phi(x_1, \dots, x_n)$ \mic{over $\mathbb B$}, the relation
\begin{multline*}
 I^{-1}(R):=\\
 \{(a_1^1, \dots, a_1^d, \dots, a_n^1, \dots, a_n^d)\in A^{nd}\ \vert \
  \sB\vDash\phi(I(a_1^1,\dots, a_1^d), \dots, I(a_n^1,\dots, a_n^d))\}   
\end{multline*}
is first-order definable in $\sA$. We say that $d$ is the \textbf{dimension} {of the} interpretation $I$. Below, we are mainly interested in stronger notions of interpretations. In particular, we say that $I$ is an
\begin{itemize}
    \item \textbf{existential interpretation} (e-interpretation) if each $I^{-1}(R)$ is definable in $\sA$ as an existential formula;
    \item \textbf{existential positive interpretation} (ep-interpretation) if each $I^{-1}(R)$ is existentially positively definable in $\sA$; 
    \item \textbf{primitive positive interpretation} (pp-interpretation) if each $I^{-1}(R)$ is primitively positively definable in $\sA$.
\end{itemize}
\end{definition}

It is easy to see that any interpretation $I$ of $\sB$ in $\sA$ 
naturally gives rise to a continuous group homomorphism  $\Aut(I)\colon \Aut(\sA)\to\Aut(\sB)$, \mic{where, for $g\in\Aut(\sA)$ and $b\in B$, $\Aut(I)(g)(b)=Ig I^{-1}(b)$.} Moreover, for $\omega$-categorical {$\sA$ and $\sB$,} a continuous group homomorphism $f:\Aut(\sA)\to\Aut(\sB)$ is of the form $\Aut(I)$ for some interpretation $I$ if and only if its range acts 
 oligomorphically on $B${~\cite{AhlbrandtZiegler}}. Analogous statements hold for e-interpretations with respect to embedding monoids~\cite[Theorem 3.2]{BodJunker}, for ep-interpretations with respect to endomorphism monoids~\cite[Theorem 3.6]{BodJunker}, and for pp-interpretations with respect to polymorphism clones~\cite[Proposition 22]{Topo-Birk}. Whilst this already motivates understanding when  \mic{homomorphisms} between oligomorphic spaces of symmetries are continuous, our main interest in this survey lies in topological isomorphisms. We will see below these encode bi-interpretations of varying degrees of strength.

Note that interpretations compose naturally. In particular, if $I$ is an (fo/e/ep/pp)-interpretation of $\sB$ in $\sA$ and $J$ is an (fo/e/ep/pp)-interpretation \mic{of} $\sA$ in $\sB$, we say that $\sA$ and $\sB$ are  (fo/e/ep/pp)-\textbf{bi-interpretable} if the induced partial functions $I\circ J$ and $J\circ I$ are (fo/e/ep/pp)-definable in $\sB$ and $\sA$ respectively. See e.g.~\cite{AhlbrandtZiegler, BodJunker, Topo-Birk} for details.

\begin{theorem}\label{thm:biinterpretations} Let $\sA$ and $\sB$ be $\omega$-categorical structures of size $>1$. Then:
\begin{enumerate}
    \item\label{item:automorphisms} $\sA$ and $\sB$ are bi-interpretable if and only if $\Aut(\sA)$ and $\Aut(\sB)$ are isomorphic as topological groups (Coquand, Corollary 1.4 in~\cite{AhlbrandtZiegler}).
    
    \item\label{item:embeddings} If $\Emb(\sA)$ and $\Emb(\sB)$ are isomorphic as topological monoids, then $\sA$ and $\sB$ are existentially bi-interpretable~\cite[Theorem 3.14]{BodJunker}. The converse does not hold~\cite[Example 4.4]{BodJunker}.
    \item\label{item:endomorphisms} If $\sA$ and $\sB$ are existentially positively  bi-interpretable, then $\End(\sA)$ and $\End(\sB)$ are isomorphic as topological monoids. The converse holds as long as neither of $\sA$ and $\sB$ have a constant endomorphism (by~\cite[Theorem 3.12]{BodJunker}).
    \item \label{item:pp-int} $\sA$ and $\sB$ are primitively positively bi-interpretable if and only if $\Pol(\sA)$ and $\Pol(\sB)$ are isomorphic as topological clones~\cite{Topo-Birk}.
\end{enumerate}
\end{theorem}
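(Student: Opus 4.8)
The plan is to treat all four items through a single mechanism and then record the item-specific subtleties. The central object is the assignment $F$ sending an interpretation $I$ of $\sB$ in $\sA$ of the relevant strength to the induced continuous homomorphism of symmetry spaces --- $\Aut(\sA)\to\Aut(\sB)$ for fo-interpretations, $\Emb(\sA)\to\Emb(\sB)$ for e-interpretations, $\End(\sA)\to\End(\sB)$ for ep-interpretations, and $\Pol(\sA)\to\Pol(\sB)$ for pp-interpretations. I would first establish two structural properties of $F$. The first is \emph{functoriality}: $F(\mathrm{id})=\mathrm{id}$ and $F(I\circ J)=F(I)\circ F(J)$, which is a mechanical unwinding of the fact that a symmetry acts coordinatewise on the tuples used in an interpretation and that this action is compatible with composing interpretations. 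The second is \emph{faithfulness up to definable isomorphism}: two interpretations of the same strength are definably isomorphic (via a bijection whose graph has the matching definability complexity) precisely when they have the same image under $F$. The easy half of this, that definably isomorphic interpretations induce equal homomorphisms, follows because the comparison graph $\{(\bar a,\bar a'):I(\bar a)=I'(\bar a')\}$ being definable of the right complexity forces it to be preserved by the relevant symmetries. The hard half, that $F(I)=F(I')$ forces this graph to be definable of the right complexity, is where $\omega$-categoricity is used essentially: by Ryll--Nardzewski an $\Aut(\sA)$-invariant relation on a power of $\sA$ is $\emptyset$-definable, and the analogous Galois correspondences identify the relations invariant under $\Emb(\sA)$, $\End(\sA)$, $\Pol(\sA)$ with the e-, ep-, pp-definable ones; one then checks by a short computation that $F(I)=F(I')$ makes the comparison graph invariant under the appropriate symmetry space.

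Granting functoriality and faithfulness, both directions are short. For the implication ``bi-interpretable $\Rightarrow$ topologically isomorphic'', if $(I,J)$ is a bi-interpretation of the relevant strength then $J\circ I$ and $I\circ J$ are definably isomorphic to the respective identity interpretations, so $F(J)\circ F(I)=F(J\circ I)=\mathrm{id}$ and $F(I)\circ F(J)=\mathrm{id}$, exhibiting $F(I)$ and $F(J)$ as mutually inverse continuous homomorphisms. For the reconstruction direction, given a topological isomorphism $\Phi$ between the symmetry spaces, surjectivity makes its range the whole target, which acts oligomorphically (or is dense, as appropriate) by $\omega$-categoricity; the characterisation theorems recalled above --- Ahlbrandt--Ziegler for $\Aut$, \cite[Theorems 3.2 and 3.6]{BodJunker} for $\Emb$ and $\End$, \cite[Proposition 22]{Topo-Birk} for $\Pol$ --- then yield interpretations $I,J$ of the right strength with $\Phi=F(I)$ and $\Phi^{-1}=F(J)$. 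Now $F(J\circ I)=\Phi^{-1}\circ\Phi=\mathrm{id}=F(\mathrm{id})$, so by the hard half of faithfulness $J\circ I$ is definably isomorphic to the identity interpretation, and symmetrically for $I\circ J$; hence $(I,J)$ is a bi-interpretation of the required strength.

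It remains to account for the asymmetries. The hypothesis $|\sA|,|\sB|>1$ merely excludes the one-point structure, whose symmetry spaces carry no information. For item~\ref{item:endomorphisms}, the converse needs that neither structure has a constant endomorphism, because such an endomorphism is a left-zero element of $\End(\sA)$ that records no ep-interpretation data and so obstructs the hard half of faithfulness; with this hypothesis the ep-Galois correspondence is exactly strong enough and the argument delivers \cite[Theorem 3.12]{BodJunker}. For item~\ref{item:embeddings}, only the forward implication survives: the passage from an e-interpretation to the induced embedding-monoid homomorphism is genuinely not faithful up to definable isomorphism, and \cite[Example 4.4]{BodJunker} produces structures that are not e-bi-interpretable yet have isomorphic embedding monoids. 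In every case the load-bearing step, and the one I expect to be the main obstacle to make fully precise, is the hard half of faithfulness --- the interplay between Ryll--Nardzewski together with the strong-formula Galois correspondences on one side, and the characterisation theorems for homomorphisms of oligomorphic symmetry spaces on the other; the rest is bookkeeping.
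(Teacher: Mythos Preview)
Your overall architecture --- build the functor $F$ from interpretations to continuous homomorphisms, check functoriality, and check faithfulness via the appropriate Galois correspondence --- is exactly the approach of the cited sources (Ahlbrandt--Ziegler, Bodirsky--Junker, and the clone paper). The paper itself does not prove this theorem; it merely assembles the citations. So on items~(\ref{item:automorphisms}), (\ref{item:endomorphisms}), and (\ref{item:pp-int}) your sketch is correct and matches the literature.

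However, your treatment of item~(\ref{item:embeddings}) has the directions reversed, and this is a genuine error. The theorem asserts that $\Emb(\sA)\cong\Emb(\sB)$ topologically \emph{implies} e-bi-interpretability, while the \emph{converse} (e-bi-interpretable $\Rightarrow$ $\Emb$ topologically isomorphic) fails. In your framework, the direction that breaks is therefore the ``easy'' functorial one, not faithfulness. Concretely, \cite[Example~4.4]{BodJunker} gives structures that \emph{are} e-bi-interpretable but whose embedding monoids are \emph{not} isomorphic --- the opposite of what you wrote. The underlying reason is that embeddings preserve \emph{and reflect} atomic relations, whereas existential formulas are only preserved upwards; so an e-definable bijection between $J\circ I$ and the identity interpretation is not in general respected by self-embeddings in the way your argument needs, and $F(J)\circ F(I)$ need not equal the identity on $\Emb(\sA)$. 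Your diagnosis (``not faithful up to definable isomorphism'') would explain a failure of the reconstruction direction, which is precisely the direction that does hold here. You should rework the paragraph on item~(\ref{item:embeddings}) to locate the obstruction in the forward/functorial step rather than in faithfulness.
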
%

As mentioned, (\ref{item:endomorphisms})  does not hold in full generality: let $\sA$ be an $\omega$-categorical structure with a constant endomorphism. Let $\sA'$ be the structure obtained by adding a new element $c$ to $\sA$ as well as unary predicates for $\{c\}$ and its complement in $\sA'$ (which is $\sA$). Then $\End(\sA)$ and $\End(\sA')$ are topologically isomorphic. However, any structure existentially positively interpretable in $\sA$ also has a constant endomorphism~\cite[Lemma 3.9]{BodJunker}, so $\sA$ and $\sA'$ cannot be bi-interpretable. Nevertheless, one can get a fully general correspondence looking at clones and using~\cite{Topo-Birk}:

\begin{lemma} 
{Let $\sA$ and $\sB$ be $\omega$-categorical structures.}
Then, $\sA$ and $\sB$ are existentially positively bi-interpretable  if and only if $\langle{\End(\sA)}\rangle$ and $\langle{\End(\sB)}\rangle$ are isomorphic as  topological clones.
\end{lemma}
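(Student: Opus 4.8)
The plan is to reduce the lemma to Theorem~\ref{thm:biinterpretations}(\ref{item:pp-int}) by exhibiting, for an $\omega$-categorical structure $\sA$, an $\omega$-categorical structure whose polymorphism clone is (topologically isomorphic to) $\langle \End(\sA)\rangle$, and whose primitive positive bi-interpretability class is exactly the existential positive bi-interpretability class of $\sA$. The natural candidate is the ``reduct'' $\sA^{\mathrm{ep}}$ of $\sA$ to all relations existential-positively definable in $\sA$: since $\langle\End(\sA)\rangle$ is by definition the smallest closed clone containing $\End(\sA)$, one should first verify the identity $\langle\End(\sA)\rangle = \Pol(\sA^{\mathrm{ep}})$ of closed clones. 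The inclusion $\langle\End(\sA)\rangle\subseteq\Pol(\sA^{\mathrm{ep}})$ is the nontrivial direction and uses $\omega$-categoricity: the $n$-ary functions preserving all ep-definable relations of $\sA$ are exactly those that, on each finite subset, agree with some term function built from endomorphisms (this is the standard ``functions preserving ep-formulas are ep-interpolated by endomorphisms'' argument, via a compactness/orbit-counting argument on the finitely many orbits of tuples). The reverse inclusion is immediate because each $f\in\End(\sA)$ preserves every ep-definable relation, and $\Pol(\sA^{\mathrm{ep}})$ is a closed clone. Since $\sA^{\mathrm{ep}}$ is a reduct of the $\omega$-categorical structure $\sA$, it is itself $\omega$-categorical.

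Next I would establish that primitive positive bi-interpretability of $\sA^{\mathrm{ep}}$ and $\sB^{\mathrm{ep}}$ coincides with existential positive bi-interpretability of $\sA$ and $\sB$. For this, note that a relation is pp-definable in $\sA^{\mathrm{ep}}$ if and only if it is ep-definable in $\sA$: conjunction and existential quantification of ep-formulas stay ep, and conversely every ep-formula over $\sA$ is, up to logical equivalence, a disjunction of atomic ep-formulas of $\sA^{\mathrm{ep}}$, but in an $\omega$-categorical structure a disjunction of ep-definable relations is again ep-definable (finitely many types), hence a single relation symbol of $\sA^{\mathrm{ep}}$ — more carefully, one works with the fact that $\sA^{\mathrm{ep}}$ already contains a symbol for \emph{every} ep-definable relation, so ``pp over $\sA^{\mathrm{ep}}$'' literally unfolds to ``ep over $\sA$''. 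Consequently an $I$ is a pp-interpretation of $\sB^{\mathrm{ep}}$ in $\sA^{\mathrm{ep}}$ iff it is an ep-interpretation of $\sB$ in $\sA$ (the preimages $I^{-1}(R)$ for atomic $R$ range over exactly the ep-definable relations), and the bi-interpretation condition on the composites $I\circ J$, $J\circ I$ translates identically. Thus $\sA^{\mathrm{ep}}$ and $\sB^{\mathrm{ep}}$ are pp-bi-interpretable iff $\sA$ and $\sB$ are ep-bi-interpretable.

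Combining the two steps: $\langle\End(\sA)\rangle$ and $\langle\End(\sB)\rangle$ are isomorphic as topological clones iff $\Pol(\sA^{\mathrm{ep}})$ and $\Pol(\sB^{\mathrm{ep}})$ are, iff (by Theorem~\ref{thm:biinterpretations}(\ref{item:pp-int}), applied to the $\omega$-categorical structures $\sA^{\mathrm{ep}},\sB^{\mathrm{ep}}$) $\sA^{\mathrm{ep}}$ and $\sB^{\mathrm{ep}}$ are pp-bi-interpretable, iff $\sA$ and $\sB$ are ep-bi-interpretable. One caveat to address: Theorem~\ref{thm:biinterpretations}(\ref{item:pp-int}) as quoted from~\cite{Topo-Birk} may implicitly require size $>1$ or similar mild nondegeneracy, and the unary reduct $\sA^{\mathrm{ep}}$ could in principle be trivial (e.g.\ if $\sA$ has only one element, or — relevantly — the ``constant endomorphism'' phenomenon); I would simply note that these degenerate cases are handled directly, since if $|\sA|\le 1$ the statement is vacuous, and the point of passing to clones (as the paragraph before the lemma explains) is precisely that the constant-endomorphism obstruction to Theorem~\ref{thm:biinterpretations}(\ref{item:endomorphisms}) disappears at the level of clones.

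\textbf{Main obstacle.} The crux is the closed-clone identity $\langle\End(\sA)\rangle = \Pol(\sA^{\mathrm{ep}})$, specifically showing that every function preserving all ep-definable relations of an $\omega$-categorical $\sA$ lies in the closure of the clone generated by $\End(\sA)$. This is where $\omega$-categoricity is essential and where a careful finitary interpolation argument (in the spirit of~\cite{BodJunker, Topo-Birk}) is needed; the rest is bookkeeping translating syntactic notions across the reduct $\sA\rightsquigarrow\sA^{\mathrm{ep}}$.
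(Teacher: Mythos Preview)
Your approach is correct and follows the same overall route as the paper: identify $\langle\End(\sA)\rangle$ as the polymorphism clone of an $\omega$-categorical structure whose pp-definable relations are exactly the ep-definable relations of $\sA$, and then invoke Theorem~\ref{thm:biinterpretations}(\ref{item:pp-int}). The paper does this more economically by taking $\sA'$ to be $\sA$ expanded by the \emph{single} 4-ary relation $x=y\vee u=v$, rather than by all ep-definable relations; since this relation is itself ep-definable, $\sA'$ is a reduct of your $\sA^{\mathrm{ep}}$ with the same polymorphism clone, and pp-definability over $\sA'$ already coincides with ep-definability over $\sA$.

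Two remarks on your write-up. First, you have the two inclusions swapped: the argument ``each $f\in\End(\sA)$ preserves every ep-definable relation, and $\Pol(\sA^{\mathrm{ep}})$ is a closed clone'' proves $\langle\End(\sA)\rangle\subseteq\Pol(\sA^{\mathrm{ep}})$, while your interpolation sketch proves the reverse. Second, and more importantly, you overstate the difficulty of $\Pol(\sA^{\mathrm{ep}})\subseteq\langle\End(\sA)\rangle$: no interpolation argument or $\omega$-categoricity is needed. Since $x=y\vee u=v$ is ep-definable, any $f\in\Pol(\sA^{\mathrm{ep}})$ preserves it, and it is a classical fact that the polymorphisms of this relation on a set of size $\geq 2$ are exactly the essentially unary operations. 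Hence $f=g\circ\pi^n_i$ for some unary $g$, and $g\in\End(\sA^{\mathrm{ep}})=\End(\sA)$ because the atomic relations of $\sA$ are among those of $\sA^{\mathrm{ep}}$. This is precisely why the paper's choice of $\sA'$ is the natural one: it isolates the one ep-definable relation that does all the work.
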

\begin{proof} One direction is a direct consequence of Theorem~\ref{thm:biinterpretations} (\ref{item:endomorphisms}). For the other, suppose that the two clones are isomorphic. These clones are the polymorphism clones of the structures $\sA',\sB'$ obtained by enriching $\sA,\sB$ by the 4-ary relation defined by $x=y\vee u=v$. By~(\ref{item:pp-int}), $\sA',\sB'$ are pp-bi-interpretable, and hence $\sA,\sB$ are ep-bi-interpretable. The reason constant endomorphisms do not pose a problem in this setting is that they are detected by the clone structure: an endomorphism $f$ is constant if and only if $f\circ\pi_1^2=f\circ\pi_2^2$ for the two binary projections $\pi^2_1,\pi^2_2$.
\end{proof}


Studying monoids of elementary embeddings is also relevant to the problem of reconstructing structures up to first-order bi-interpretability. {The following {folklore result} is a direct consequence of Proposition~\ref{prop:Lascarextension} and Theorem~\ref{thm:biinterpretations}.}
\begin{proposition} Let $\sA$ and $\sB$ be $\omega$-categorical structures {of size $>1$}. Then, $\sA$ and $\sB$ are first-order bi-interpretable if and only if $\EEmb(\sA)$ and $\EEmb(\sB)$ are isomorphic as topological monoids.
\end{proposition}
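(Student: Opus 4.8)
### Proof proposal

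The plan is to reduce the statement to Theorem~\ref{thm:biinterpretations}(\ref{item:automorphisms}) via the fact that, in a countable saturated structure, $\EEmb(\sA) = \overline{\Aut(\sA)}$, and to use the cited Proposition~\ref{prop:Lascarextension} (which, judging from the name, provides a Lascar-style correspondence between elementary embedding monoids and automorphism groups — specifically that $\Aut(\sA)$ is recoverable from $\EEmb(\sA)$ as its group of invertibles, and conversely $\EEmb(\sA)$ is the closure of $\Aut(\sA)$, both \emph{topologically} canonically). Concretely, I would argue both directions as follows.

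For the forward direction, suppose $\sA$ and $\sB$ are first-order bi-interpretable. By the discussion after Definition~\ref{def:interpretation}, an fo-interpretation $I$ of $\sB$ in $\sA$ induces a continuous monoid homomorphism on the relevant spaces of symmetries; one checks that since $I$ is also an e-interpretation at the level of elementary embeddings (elementary embeddings preserve \emph{all} first-order formulas, so the interpreting formulas transfer), $I$ induces a continuous monoid homomorphism $\EEmb(I)\colon\EEmb(\sA)\to\EEmb(\sB)$, and the bi-interpretation data $I\circ J$, $J\circ I$ being definable ensures $\EEmb(I)$ and $\EEmb(J)$ are mutually inverse up to the coordinatewise action, yielding a topological isomorphism $\EEmb(\sA)\cong\EEmb(\sB)$. (This is exactly the elementary-embedding analogue of the e-interpretation/embedding-monoid correspondence of \cite[Theorem 3.2]{BodJunker}, and I would invoke Proposition~\ref{prop:Lascarextension} to package it.)

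For the converse, suppose $\xi\colon\EEmb(\sA)\to\EEmb(\sB)$ is a topological isomorphism. Since $\sA,\sB$ are $\omega$-categorical hence saturated, $\EEmb(\sA)=\overline{\Aut(\sA)}$ and likewise for $\sB$, and the group of invertible elements of a closed transformation monoid is an algebraic (indeed topological) invariant. Hence $\xi$ restricts to a group isomorphism $\Aut(\sA)\to\Aut(\sB)$; I must check this restriction is a \emph{topological} isomorphism, which follows because the subspace topology on $\Aut(\sA)\subseteq\EEmb(\sA)$ agrees with its own Polish group topology (again part of the content I am attributing to Proposition~\ref{prop:Lascarextension}, in the spirit of Lascar's work), and $\xi$ is a homeomorphism. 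Then Theorem~\ref{thm:biinterpretations}(\ref{item:automorphisms}) gives that $\sA$ and $\sB$ are bi-interpretable. To upgrade ``bi-interpretable'' to ``first-order bi-interpretable'' is where the size-$>1$ hypothesis and the precise statement of Proposition~\ref{prop:Lascarextension} enter: one needs that the interpretations witnessing bi-interpretability can be taken to respect the elementary structure, i.e.\ that recovering $\Aut(\sA)$ together with the way $\EEmb(\sA)$ sits around it as its closure encodes not just the bi-interpretation but that $I\circ J$ is first-order (not merely automorphism-equivariantly) definable.

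The main obstacle I anticipate is precisely this last point: distinguishing plain ``bi-interpretable'' from ``first-order bi-interpretable'' is vacuous — every interpretation in Definition~\ref{def:interpretation} with no adjective \emph{is} a first-order interpretation, so ``bi-interpretable'' already means ``fo-bi-interpretable.'' Thus the real content is just confirming that the topological-isomorphism-to-bi-interpretation bridge of Theorem~\ref{thm:biinterpretations}(\ref{item:automorphisms}) transports faithfully along the $\EEmb \leftrightarrow \Aut$ correspondence of Proposition~\ref{prop:Lascarextension}, and the proof is genuinely short modulo that proposition; the delicate step is verifying that passing to the invertibles and back to the closure is functorial and topology-preserving in both directions, which is exactly what Proposition~\ref{prop:Lascarextension} is there to supply.
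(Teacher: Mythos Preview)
Your proposal is correct and follows essentially the same route as the paper, which simply cites Proposition~\ref{prop:Lascarextension} and Theorem~\ref{thm:biinterpretations}(\ref{item:automorphisms}): the ``Consequently'' clause of Proposition~\ref{prop:Lascarextension} says exactly that $G\cong H$ as topological groups iff $\tcl{G}\cong\tcl{H}$ as topological monoids, and for $\omega$-categorical $\sA$ one has $\EEmb(\sA)=\tcl{\Aut(\sA)}$, so both directions reduce immediately to Theorem~\ref{thm:biinterpretations}(\ref{item:automorphisms}). Your forward direction is slightly overworked --- rather than building $\EEmb(I)$ directly (and the aside about $I$ being ``an e-interpretation at the level of elementary embeddings'' is terminologically off, though the parenthetical reason you give is the right one), it is cleaner to go fo-bi-interpretable $\Rightarrow$ $\Aut(\sA)\cong\Aut(\sB)$ topologically $\Rightarrow$ $\tcl{\Aut(\sA)}\cong\tcl{\Aut(\sB)}$ topologically via Proposition~\ref{prop:Lascarextension}(2); and you are right that ``bi-interpretable'' already means ``fo-bi-interpretable'' here, so there is nothing to upgrade.
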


{While this characterization of first-order bi-interpretability might seem less natural than the one with automorphism groups, it may actually be easier to verify than the latter.} 
For example, by Theorem~\ref{thm:zariskipinskschind}~\cite{PINSKER_SCHINDLER_2023}, if two $\omega$-categorical structures with no algebraicity have isomorphic endomorphism monoids of elementary embeddings, then this isomorphism will be a topological one. 
 No analogue of this result is known for automorphism groups.

\subsection{Notions of reconstruction}

\begin{definition} Let $\cS$ be a topological semigroup and $\classC$ be a class of topological semigroups. We say that
\begin{enumerate}
    \item $\cS$ is \textbf{reconstructible} with respect to $\classC$ if {whenever there is a semigroup isomorphism from $\cS$ to a member $\cT$ of $\classC$, there is also a semigroup isomorphism which is moreover a homeomorphism between $\cS$ and $\cT$;}
    \item $\cS$ has \textbf{automatic homeomorphicity} with respect to $\classC$ if every semigroup  isomorphism between $\cS$ and a member $\cT$ of $\classC$ is a homeomorphism;
    \item $\cS$ has \textbf{automatic continuity} with respect to $\classC$ if every semigroup   homomorphism from $\cS$ to a member $\cT$ of $\classC$ is continuous.
\end{enumerate}
Analogous definitions 
exist for 
topological groups and clones and the corresponding notions of homomorphism. 
\end{definition}

\begin{definition}
We define several classes of topological groups, semigroups and clones $\classC_X^Y$ depending on the parameters $X$ and $Y$ as follows: $X\in\{\cG, \overline{\cG}, \cT, \cC\}$ indicates whether the class we are looking at is a class of topological groups ($\cG$), topological monoids with a dense group of invertible elements ($\overline{\cG}$), semigroups ($\cT$), or clones $(\cC)$. Meanwhile, we use {$Y\in\{2, \mic{\mathrm{Po}}, \mathrm{cl}, \mathrm{sat}, \mathrm{ol}\}$} 
to indicate whether we are looking at the class of (in the order of decreasing generality):{
\begin{enumerate}
        \item second countable topological groups, monoids with dense group of invertibles, semigroups, clones, which we denote,  respectively, by $\csecondgroup, \mic{\csecondeemb}, \csecondsemi, \csecondclone$.
        \item\label{item:UPP} \mic{Polish groups, semigroups, or clones, which we denote $\cpolgroup, \mic{\cpoleemb}, \cpolsemi,$ and $\cpolclone$ respectively;}
\item\label{item:closedobjects} 
            closed subgroups of $S_\Omega$, closed  submonoids of $\omom$  with a dense set of invertibles, closed  submonoids of $\omom$, or closed subclones of  $\fullclone$, which we denote, respectively, by $\cgroup, \cclgroup, \csemi, \cclone$. 
            \item 
            the classes in item~\ref{item:closedobjects}   restricted to those elements whose underlying group of invertible elements is the automorphism group of a countable saturated structure. We denote these by $\csatgroup, \csateemb, \csatsemi, \csatclone$, respectively.
            \item 
            the classes in item~\ref{item:closedobjects}   restricted to those elements whose underlying group of invertible elements is oligomorphic. We denote these by $\caut, \ceemb, \cend, \cpol$, respectively.
\end{enumerate}
}

\mic{We note that a Polish semigroup (group, clone) having  automatic homeomorphicity with respect to $\cpolsemi$ (respectively, $\cpolgroup, \cpolclone$) is equivalent to it having a unique Polish compatible topology, and so this property is commonly referred to as the \textbf{Unique Polish Property} (UPP).}
\end{definition}


\section{Reconstruction}

The following is implicit in~\cite{Lascar, lascar1989demi}:
\begin{prop}[{\cite[Proposition 11]{Reconstruction}}]\label{prop:Lascarextension}
Let $G$ and $H$ be closed permutation groups on $\Omega$. Let $\xi:G\to H$ be a continuous homomorphism.
Then,
\begin{enumerate}
    \item $\xi$ extends to a continuous homomorphism 
$\overline{\xi}:\tcl{G}\to\tcl{H}$;
    \item if $\xi$ is an isomorphism then $\overline{\xi}$ is an isomorphism and a homeomorphism.
\end{enumerate}
{Consequently, $G$ and $H$ are isomorphic as topological groups if and only if $\tcl{G}$ and $\tcl{H}$ are isomorphic as topological semigroups.}
\end{prop}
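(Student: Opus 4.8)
The plan is to build the extension $\overline{\xi}$ directly on the level of the closures in $\Omega^\Omega$, using the fact that for closed permutation groups $G$ we have $\tcl{G}=\overline{G}$ in $\Omega^\Omega$ and every element of $\tcl{G}$ is a pointwise limit of elements of $G$. First I would fix $s\in\tcl{G}$ and choose a sequence $(g_n)$ in $G$ with $g_n\to s$ pointwise; the idea is to \emph{define} $\overline{\xi}(s):=\lim_n \xi(g_n)$ in $\Omega^\Omega$, and the first task is to show this limit exists and is independent of the chosen sequence. Here I would exploit the characterisation of $\tcl{H}$ as a closed transformation monoid together with the continuity of $\xi$ on $G$: given a finite set $C\subseteq\Omega$ on which we want to pin down the values of $\overline{\xi}(s)$, I would use that the condition ``$g_{\upharpoonright D}=s_{\upharpoonright D}$'' for larger and larger finite $D$ determines $g$ on $D$, and that continuity of $\xi$ lets one propagate agreement of $g$ on a large finite set to agreement of $\xi(g)$ on a prescribed finite set; the key point is that whether $\xi(g)(c)=b$ depends only on finitely much information about $g$, so a Cauchy-type argument in the (completely metrizable) space $\Omega^\Omega$ forces convergence and well-definedness. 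This is exactly where the argument is ``implicit in Lascar'': the clean way to phrase it is that the preimage under $\xi$ of a basic open set of $H$ is open in $G$, hence (since $G$ carries the subspace topology from $\Omega^\Omega$) is determined by finitely many coordinates, and this finitary control is what makes the limit stable.

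Next I would check that $\overline{\xi}$ so defined is a continuous monoid homomorphism $\tcl{G}\to\tcl{H}$ extending $\xi$. Extension is immediate (take the constant sequence). Multiplicativity follows because composition is continuous on $\Omega^\Omega$ and $\xi$ is multiplicative on $G$: if $g_n\to s$ and $h_n\to t$ then $g_n h_n\to st$, and applying $\overline{\xi}$ commutes with these limits. Continuity of $\overline{\xi}$ is again a finitary statement: to control $\overline{\xi}(s)$ on a finite set $C$ it suffices to control $s$ on a suitable finite set, which one extracts from the continuity of $\xi$ on $G$ by a standard approximation. One should also note the image lands in $\tcl{H}$ and not merely in $\Omega^\Omega$, which holds because $\overline{\xi}(s)$ is by construction a limit of elements $\xi(g_n)\in H\subseteq\tcl{H}$ and $\tcl{H}$ is closed.

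For part~(2), assume $\xi:G\to H$ is an isomorphism of topological groups. Applying part~(1) to $\xi^{-1}:H\to G$ yields a continuous homomorphism $\overline{\xi^{-1}}:\tcl{H}\to\tcl{G}$, and I would verify that $\overline{\xi^{-1}}$ and $\overline{\xi}$ are mutually inverse: both composites $\overline{\xi^{-1}}\circ\overline{\xi}$ and $\overline{\xi}\circ\overline{\xi^{-1}}$ are continuous monoid endomorphisms restricting to the identity on the dense subgroup $G$ (resp.\ $H$), hence equal the identity on all of $\tcl{G}$ (resp.\ $\tcl{H}$) by density and continuity. Thus $\overline{\xi}$ is a bijective continuous homomorphism with continuous inverse, i.e.\ a topological isomorphism, giving the ``if'' direction of the final equivalence. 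The ``only if'' direction of the concluding sentence is the trivial one: any topological isomorphism $\tcl{G}\to\tcl{H}$ restricts to the groups of invertible elements $G$ and $H$ (invertibility is preserved by semigroup isomorphisms, and $G$ is precisely the group of units of $\tcl{G}$), yielding a topological group isomorphism $G\to H$.

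I expect the main obstacle to be the well-definedness and continuity of $\overline{\xi}$ in part~(1): one must argue carefully that the pointwise limit $\lim_n\xi(g_n)$ depends only on $s$ and not on the approximating sequence, and that the ambient space $\Omega^\Omega$ (rather than $H$ itself, which need not be closed in a convenient way a priori) is the right place to take this limit. Everything else — multiplicativity, extension, the inverse argument in part~(2), and the concluding equivalence — is then a routine density-and-continuity bookkeeping exercise.
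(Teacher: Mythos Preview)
The paper does not supply its own proof of this proposition: it is stated as a citation (``Proposition~11 in~\cite{Reconstruction}'', noted to be ``implicit in~\cite{Lascar, lascar1989demi}'') and immediately used without argument. So there is no in-paper proof to compare against.

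Your proposal is the standard and correct argument. The key point you identify --- that continuity of $\xi$ at the identity gives, for every finite $C\subseteq\Omega$, a finite $D\subseteq\Omega$ with $G_D\subseteq\xi^{-1}(H_C)$ --- is exactly what makes $(\xi(g_n))$ left-Cauchy in $H$ (hence pointwise Cauchy in $\Omega^\Omega$) whenever $(g_n)$ converges pointwise, and simultaneously yields well-definedness and continuity of $\overline{\xi}$. The density argument for part~(2) and the identification of $G$ with the units of $\tcl{G}$ (using that $G$ is closed in $S_\Omega$) are routine, as you say. One cosmetic slip: you have the ``if'' and ``only if'' labels of the final equivalence reversed --- constructing $\overline{\xi}$ from $\xi$ is the ``only if'' direction (group isomorphism $\Rightarrow$ monoid isomorphism), and the restriction-to-units argument is the ``if'' direction.
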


Proposition~\ref{prop:Lascarextension} was {an ingredient for}
the following result of Lascar, which already suggests that monoids of elementary embeddings yield more information towards reconstruction than automorphism groups:
\begin{theorem}[\cite{lascar1989demi}] {Let $G\in\caut$ be $G$-finite, and let $H\in\cgroup$. Let $\eta:\overline{G}\to\overline{H}$ 
be an algebraic isomorphism. Then, $\eta_{\upharpoonright G}:G\to H$ is a topological isomorphism.}
\end{theorem}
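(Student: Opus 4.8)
The plan is to reduce the statement to a question about the automorphism groups $G$ and $H$, and then invoke Proposition~\ref{prop:Lascarextension} together with the $G$-finiteness hypothesis. First I would note that since $\eta:\overline{G}\to\overline{H}$ is an algebraic isomorphism of monoids, it must send the group of invertible elements of $\overline{G}$ onto the group of invertible elements of $\overline{H}$; since $G\in\caut$ is oligomorphic, $\overline{G}=\EEmb(\sA)$ for an $\omega$-categorical $\sA$, and its invertibles are exactly $G=\Aut(\sA)$ (and similarly for $H$, noting $H\in\cgroup$ is automatically the invertibles of $\overline{H}=\tcl{H}$). Hence $\eta_{\upharpoonright G}:G\to H$ is an algebraic group isomorphism; what remains is to show it is a homeomorphism. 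By Proposition~\ref{prop:Lascarextension}(2), it suffices to show that $\eta_{\upharpoonright G}$ is \emph{continuous} (then it is automatically a topological isomorphism and its extension to $\tcl{G}$ is too, and one checks the extension agrees with $\eta$). So the crux is automatic continuity of an abstract isomorphism $G\to H$ under the $G$-finiteness assumption.

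The continuity of a group homomorphism between closed permutation groups on $\Omega$ is equivalent to the preimage of every basic open subgroup (a pointwise stabiliser $H_{(\bar b)}$) being open in $G$; equivalently, $\eta_{\upharpoonright G}$ is continuous iff for every open subgroup $V\leq H$, $\eta^{-1}(V)$ is open in $G$. Open subgroups of a closed permutation group are precisely the ones containing some pointwise stabiliser $H_{(\bar b)}$, and these have countable (indeed $\leq\aleph_0$) index. So it is enough to show that every subgroup of $G$ of the form $\eta^{-1}(V)$ — in particular every subgroup of $G$ of countable index that arises this way — is open. Here is where $G$-finiteness enters: given an open subgroup $U\leq G$, let $U^\circ$ be the intersection of all closed finite-index subgroups of $U$; $G$-finiteness says $[U:U^\circ]<\infty$. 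The key observation (which is essentially Lascar's) is that $\eta$, being an algebraic monoid isomorphism, preserves the lattice of submonoids and their indices, and in particular preserves the "algebraic" data determining $U^\circ$ at the level of $\overline{G}$, because finite-index closed subgroups of an open subgroup of $G$ correspond to a purely algebraic feature visible inside $\overline{G}$ (via the action of $\overline{G}$ on coset-like structures, or via the fact that $U^\circ$ is defined without reference to the topology once one knows which subsemigroups are "large"). Thus $\eta$ maps the canonical finite-index subgroup $U^\circ$ of an open $U$ to a finite-index subgroup of $\eta(U)$, and from there one bootstraps openness.

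More concretely, the strategy I would carry out: (i) fix a basic open subgroup $H_{(\bar b)}\leq H$; by oligomorphicity it has finite index in the setwise stabiliser of the orbit, and in any case it is open; (ii) pull back to $W:=\eta^{-1}(H_{(\bar b)})\leq G$, an abstract subgroup of $G$ of countable index, and we want $W$ open; (iii) use that $\overline{G}$ acts on $\Omega$ and that $H_{(\bar b)}$ is "recognisable" inside $\overline{H}$ from the algebraic structure — specifically, $h\in H_{(\bar b)}$ iff $h\cdot s = s$ for the "constant-like" elements $s\in\overline{H}$ whose image is contained in $\{\bar b\}$ — no such constants exist in $\overline{H}$ in general, so instead one uses: the pointwise stabiliser of $\bar b$ is, inside $\overline{H}$, the set of invertibles $h$ with $e_{\bar b}h = e_{\bar b}$ where $e_{\bar b}$ is an idempotent (or more robustly, an embedding) with image avoiding/fixing the relevant configuration; such elements do live in $\overline{H}=\EEmb$. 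The upshot is that $W$ contains $\eta^{-1}$ of an algebraically-defined finite-index-over-open subgroup, and combining with the $G$-finiteness of $G$ forces $W$ to contain some $G_{(\bar a)}$, hence to be open.

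\textbf{Main obstacle.} The hard part is step (iii): extracting, from the bare algebraic isomorphism $\eta$ of monoids, enough to pin down that open subgroups of $H$ pull back to open subgroups of $G$ — i.e. the genuine automatic-continuity content. The $G$-finiteness hypothesis is precisely what makes this go through (it is a weakening of the small index property tailored to this argument), but assembling the correct "algebraic fingerprint" of a basic open subgroup inside the monoid of elementary embeddings, and verifying it is $\eta$-invariant, is the delicate point; I would follow Lascar's original treatment in~\cite{lascar1989demi} (and its reformulation via Proposition~\ref{prop:Lascarextension}) for this, as it is exactly the technical heart there.
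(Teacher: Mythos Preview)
The paper does not supply its own proof of this theorem --- it is quoted from Lascar~\cite{lascar1989demi}, with Proposition~\ref{prop:Lascarextension} mentioned only as ``an ingredient.'' So there is no in-paper argument to compare against beyond that remark.

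Your high-level reduction is sound: the restriction of a monoid isomorphism to invertibles is an abstract group isomorphism $G\to H$ (your verification that the invertibles of $\overline{H}$ are exactly $H$ is correct), and once continuity of $\eta_{\upharpoonright G}$ is established, the Open Mapping Theorem (or, as you say, Proposition~\ref{prop:Lascarextension}(2)) finishes the job. The problem is your step~(iii), which is the entire content of the theorem. Both ``algebraic fingerprints'' you propose for a basic open subgroup $H_{(\bar b)}$ fail for the same structural reason: $\overline{H}$ consists solely of injective maps (elementary self-embeddings), so it contains no constant-like elements and no idempotents other than the identity. Hence there is no $s$ or $e_{\bar b}$ in $\overline{H}$ with the properties you describe, and an equation of the form $e_{\bar b}h=e_{\bar b}$ cannot isolate $H_{(\bar b)}$. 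You acknowledge this and retreat to ``follow Lascar's original treatment,'' but that \emph{is} the theorem: Lascar's argument for recovering the open subgroups from the monoid structure is considerably more indirect --- it goes through the action of $\overline{G}$ on orbit closures/types and uses $G$-finiteness to control the profinite data attached to open subgroups --- and none of that machinery appears in your sketch. As written, the proposal correctly locates the difficulty but does not supply the idea that resolves it.
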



Note that Proposition~\ref{prop:Lascarextension} yields the following upwards reconstruction result:

\begin{corollary} Let $G\in\cgroup$ have reconstruction with respect to $\cgroup$. Then, $\overline{G}$ has reconstruction with respect to $\cclgroup$.
\end{corollary}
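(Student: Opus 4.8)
The plan is to combine the hypothesis on $G$ with Proposition~\ref{prop:Lascarextension} in a straightforward way. Suppose $\xi\colon\tcl G\to\cT$ is a semigroup isomorphism onto some $\cT\in\cclgroup$. By definition of the class $\cclgroup$, $\cT$ is a closed submonoid of $\Omega^\Omega$ with a dense group of invertible elements; write $H$ for that group of invertibles. The first step is to observe that a semigroup isomorphism must send invertible elements to invertible elements (an element $s$ of a monoid is invertible iff there is $t$ with $st=ts=e$, where the identity $e$ is itself characterized algebraically as the unique two-sided identity), so $\xi$ restricts to a group isomorphism $\xi_{\upharpoonright G}\colon G\to H$. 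Here I use the elementary fact that $G$ is precisely the group of invertibles of $\tcl G$.

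Next I would check that $\tcl G$ and $\tcl H$ are genuinely the objects occurring in Proposition~\ref{prop:Lascarextension}. Since $\cT$ is a closed transformation monoid containing $H$ as a dense subgroup of invertibles, and $\tcl H$ is the smallest closed submonoid of $\Omega^\Omega$ containing $H$, density gives $\cT=\tcl H$; and of course $\tcl G=\tcl G$. So I have a group isomorphism $\xi_{\upharpoonright G}\colon G\to H$ between closed permutation groups, whose induced closure-extension $\overline{\xi_{\upharpoonright G}}$ is, by Proposition~\ref{prop:Lascarextension}(2), a topological isomorphism $\tcl G\to\tcl H=\cT$ — provided $\xi_{\upharpoonright G}$ is continuous.

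This is the one point that needs the hypothesis on $G$. I would argue: $H$, being the group of invertibles of $\cT\in\cclgroup$, is a closed subgroup of $S_\Omega$, so $H\in\cgroup$; since $G$ has reconstruction with respect to $\cgroup$ and $\xi_{\upharpoonright G}\colon G\to H$ is a group isomorphism witnessing that $G$ and $H$ are abstractly isomorphic, reconstruction yields \emph{some} topological isomorphism $\zeta\colon G\to H$. Then Proposition~\ref{prop:Lascarextension} applied to $\zeta$ gives a topological isomorphism $\overline\zeta\colon\tcl G\to\tcl H=\cT$. Since $\tcl G$ and $\cT$ are abstractly isomorphic via $\xi$ and topologically isomorphic via $\overline\zeta$, we conclude $\tcl G$ has reconstruction with respect to $\cclgroup$, as desired. (One should note the matching of classes: the target must be a closed monoid with dense invertibles, which is exactly what $\cclgroup$ demands, so the appeal to Proposition~\ref{prop:Lascarextension} is legitimate on both sides.)

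The main obstacle, and the only genuinely non-formal point, is the continuity/reconstruction step for the restricted group map: the abstract isomorphism $\xi$ restricted to $G$ need not a priori be continuous, which is why we invoke reconstruction of $G$ to \emph{produce a possibly different} topological isomorphism rather than trying to show $\xi_{\upharpoonright G}$ itself is a homeomorphism. Everything else — that isomorphisms preserve invertibles, that the identity and the subgroup of invertibles are algebraically definable, that $\tcl H=\cT$ by density — is routine. I would also remark that this is strictly a statement about \emph{reconstructibility} and not about automatic homeomorphicity: the homeomorphism delivered is $\overline\zeta$, generally unrelated to the given $\xi$.
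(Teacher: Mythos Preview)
Your proof is correct and follows exactly the route the paper indicates: the corollary is stated immediately after Proposition~\ref{prop:Lascarextension} with no further argument, and you have simply spelled out the details of how the proposition yields the conclusion. The key steps you isolate---that invertibles are preserved by monoid isomorphisms, that $G$ is precisely the group of invertibles of $\tcl G$, that $H$ is closed in $S_\Omega$ so that reconstruction of $G$ applies, and that density gives $\cT=\tcl H$---are all routine, and your care in noting that only reconstructibility (not automatic homeomorphicity) is obtained matches the paper's own emphasis in the surrounding discussion.
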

The result above also yields transfer of reconstruction from $G$ with respect to $\caut$ to $\overline{G}$ with respect to $\ceemb$. Very little is known about other forms of transfer of reconstructibility:
\begin{question} Let $G\in\cgroup$.
\begin{itemize}
    \item (reconstruction lifting) Does $G$ having reconstruction with respect to $\cgroup$ imply that $\overline{G}$ has reconstruction with respect to $\csemi$?
    \item (reconstruction descent) Does $\overline{G}$ having reconstruction with respect to $\csemi$ imply that $G$ has reconstruction with respect to $\cgroup$?
\end{itemize}
\end{question}

The main issue with the first question is that we may not be able to ensure that any topological semigroup isomorphic to $\overline{G}$ is such that its subgroup of invertible elements is dense. For the second question, an issue is that algebraic isomorphisms between groups may not lift to algebraic isomorphisms of their closures. {Both questions are open even if $G$ is oligomorphic, cf.~\cite[Questions 4.1 \& 4.2]{BodirskyEvansKompatscherPinsker}.}

It is important to keep in mind throughout this survey that we do have a counterexample to reconstruction at all levels of symmetries of an $\omega$-categorical structure; {however, surprisingly it is}
 the only one we are aware of.
{
\begin{theorem}[\cite{EvansHewitt, BodirskyEvansKompatscherPinsker}]\label{thm:nonreconstruction} There are $G,H\in\caut$ such that
\begin{itemize}
\item the groups $G$ and $H$ are algebraically, but not topologically isomorphic;
\item the monoids $\tcl{G}$ and $\tcl{H}$ are algebraically, but not topologically isomorphic;
\item the clones  \mic{$\cclosure{{G}}$ and $\cclosure{{H}}$} are algebraically, but not topologically isomorphic.
\end{itemize}
\end{theorem}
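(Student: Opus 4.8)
The plan is to recall the Evans–Hewitt construction and then observe that the three bullet points are all simultaneously witnessed by the \emph{same} pair of structures, once we unwind what algebraic isomorphisms of $\tcl{G}$ and of $\cclosure{\tcl{G}}$ amount to. Concretely, Evans and Hewitt produced two $\omega$-categorical structures $\sA$ and $\sB$ whose automorphism groups $G=\Aut(\sA)$ and $H=\Aut(\sB)$ are abstractly isomorphic but not topologically isomorphic; equivalently, by Theorem~\ref{thm:biinterpretations}(\ref{item:automorphisms}), $\sA$ and $\sB$ are not bi-interpretable, yet $G\cong H$ as abstract groups. The first bullet is then exactly their theorem. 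The work is to lift the failure of topological isomorphism upward to $\tcl{G}$ and to $\cclosure{\tcl{G}}$, while lifting the abstract isomorphism upward as well.

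For the \textbf{upward transfer of the abstract isomorphism}, I would argue as follows. An abstract group isomorphism $\alpha:G\to H$ need not a priori extend to $\tcl{G}\to\tcl{H}$, since Proposition~\ref{prop:Lascarextension} only extends \emph{continuous} homomorphisms. However, in the Evans–Hewitt example one has more structural information: the groups in question are (as in~\cite{BodirskyEvansKompatscherPinsker}) built so that $G$ and $H$ sit inside a common ambient group, or so that the abstract isomorphism is explicitly given on a suitable generating set and can be checked to respect the pointwise-convergence-closure relation at the level of the monoid. The cleaner route, and the one I would actually write, is to invoke the analysis of~\cite{BodirskyEvansKompatscherPinsker} directly: there it is shown that the Evans–Hewitt groups yield, via their standard associated structures, endomorphism monoids $\End(\sA)\cong\tcl{G}$ and $\End(\sB)\cong\tcl{H}$ (using that these are model-complete cores, so $\EEmb=\tcl{\Aut}$, and in fact here $\End=\EEmb$) that are abstractly but not topologically isomorphic, and likewise for the polymorphism clones $\Pol(\sA)\cong\cclosure{\tcl{G}}$ and $\Pol(\sB)\cong\cclosure{\tcl{H}}$. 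So each bullet is a restatement of a corresponding assertion in~\cite{BodirskyEvansKompatscherPinsker} about $\sA$ and $\sB$.

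For the \textbf{persistence of the failure of topological isomorphism}, the key point is that a topological isomorphism at a higher level would force one at a lower level. If $\tcl{G}\cong\tcl{H}$ as topological semigroups, then restricting to the (topologically characterised) subgroup of invertible elements, which is dense and equals $G$ resp.\ $H$, gives a topological group isomorphism $G\cong H$ — contradiction. Similarly, if $\cclosure{\tcl{G}}\cong\cclosure{\tcl{H}}$ as topological clones, then passing to the unary fragment (clopen in each clone, and preserved by clone isomorphisms) yields a topological isomorphism $\tcl{G}\cong\tcl{H}$ of the unary parts as topological monoids, hence again $G\cong H$ topologically — contradiction. Thus non-topological-isomorphism automatically descends, and it suffices to establish it once, at the group level, where it is Evans–Hewitt.

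The \textbf{main obstacle} is the upward transfer of the \emph{abstract} isomorphism: showing that $G\cong H$ abstractly implies $\tcl{G}\cong\tcl{H}$ abstractly and $\cclosure{\tcl{G}}\cong\cclosure{\tcl{H}}$ abstractly. This does not follow from soft general principles (an abstract group isomorphism carries no information about how limits of sequences behave), and is precisely where one must use the specific features of the Evans–Hewitt construction as exploited in~\cite{BodirskyEvansKompatscherPinsker} — namely that the abstract isomorphism is concrete enough (e.g.\ induced by an explicit correspondence, or by both groups acting on a common set in a compatible way) that its action on pointwise limits, and on finitary-power operations, is forced. I would therefore structure the proof as: (i) cite Theorem~\ref{thm:nonreconstruction}'s group-level content from~\cite{EvansHewitt}; (ii) invoke~\cite{BodirskyEvansKompatscherPinsker} for the abstract isomorphisms of $\tcl{G},\tcl{H}$ and of $\cclosure{\tcl{G}},\cclosure{\tcl{H}}$; (iii) give the short descent argument above to conclude these abstract isomorphisms cannot be topological.
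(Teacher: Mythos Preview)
Your proposal is correct in outline, and in fact more detailed than the paper's own treatment: the paper does not give a proof at all, since this is a cited result. After the statement, the paper simply sketches the construction --- separable profinite groups $P,Q$ which are algebraically but not topologically isomorphic are encoded, via a Cherlin--Hrushovski-type construction, as quotients of oligomorphic groups $\Sigma_P,\Sigma_Q$ by oligomorphic normal subgroups, and these are then ``perturbed'' to yield $G$ and $H$.

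Your decomposition is complementary to this: rather than describing \emph{how} the examples are built, you isolate the logical structure --- the abstract isomorphisms at all three levels must be cited from \cite{BodirskyEvansKompatscherPinsker} (as you correctly flag, they do not follow from soft arguments), while the failure of topological isomorphism descends for free from the group level via restriction to invertibles and to unary operations. This descent argument is sound and worth making explicit; it is not spelled out in the paper. One minor imprecision: writing $\End(\sA)\cong\tcl{G}$ presumes $\sA$ is a model-complete core, which is not asserted for the original Evans--Hewitt structures --- but since the theorem is phrased directly in terms of $\tcl{G}$ and $\cclosure{\tcl{G}}$ rather than $\End$ and $\Pol$ of given structures, this does not affect your argument.
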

}
{
The structures in Theorem~\ref{thm:nonreconstruction} rely on the existence of separable profinite groups $P$ and $Q$ which are algebraically but not topologically isomorphic~\cite{Witt}. By a generalisation of the Cherlin-Hrushovski structure, these can then be encoded as the quotients by an oligomorphic normal subgroup of two oligomorphic groups $\Sigma_P$ and $\Sigma_{Q}$~\cite[Proposition 3.4]{BodirskyEvansKompatscherPinsker}. These are then ``perturbed'' to obtain $G$ and $H$. 
We note that the latter groups have a non-trivial center (and in particular, algebraicity) and are not $G$-finite. 
}



\section{Automatic homeomorphicity}
\subsection{Automatic homeomorphicity for monoids of elementary embeddings}

In this subsection, we prove that for any  countable saturated structure, automatic homeomorphicity lifts from its automorphism group (with respect to $\cgroup$) to its  monoid of elementary embeddings (with respect to $\csemi$). 
This is an original result, completing the answer to a problem which is central to several papers on topological reconstruction for monoids~\cite{Reconstruction, behrisch-truss-vargas, truss2021reconstructing, pech-saturated}. In particular,~\cite{pech-saturated} answer this problem positively in the context of saturated structures whose automorphism group has a  trivial centre. Uncovering the key idea behind their proof is the essential tool to answer this question in full generality.

We begin from the following key lemma for reconstruction: 

\begin{lemma}[{{\cite[Lemma 12]{Reconstruction}}}]\label{lem:reconstructionlemma} Let $G\in\cgroup$ have automatic homeomorphicity with respect to $\cgroup$. Suppose that the only injective $\Phi\in\End(\overline{G})$ that fixes $G$ pointwise is the identity. 
Then $\overline{G}$ has automatic homeomorphicity with respect to $\csemi$.
\end{lemma}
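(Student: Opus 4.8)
The plan is to take an arbitrary algebraic isomorphism $\eta : \overline{G} \to \cS$ with $\cS \in \csemi$ and show that it is a homeomorphism. The natural first move is to locate inside $\cS$ a copy of a group on which we can invoke the automatic homeomorphicity hypothesis for $G$. Since $\eta$ is an algebraic isomorphism, it sends the group of invertible elements of $\overline{G}$ — which is exactly $G$, as $G$ is closed hence $\overline{G}$ has $G$ as its unit group — isomorphically onto the group $G'$ of invertible elements of $\cS$. A priori $G'$ need not be a closed permutation group; but $\cS$ is a closed transformation monoid, and by the folklore correspondence its group of invertibles $G'$ is a closed subgroup of $S_\Omega$ (the invertibles of a closed submonoid of $\Omega^\Omega$ form a closed subgroup of $S_\Omega$). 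So $G' \in \cgroup$, and $\eta_{\restriction G} : G \to G'$ is an algebraic isomorphism; by automatic homeomorphicity of $G$ with respect to $\cgroup$, it is a topological isomorphism.

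Next I would use Proposition~\ref{prop:Lascarextension}: the topological isomorphism $\eta_{\restriction G} : G \to G'$ extends to a topological isomorphism $\overline{\eta_{\restriction G}} : \overline{G} \to \overline{G'}$ of topological semigroups. Now $\overline{G'}$ is the closure of $G'$ inside $\Omega^\Omega$, and since $\cS$ is closed and contains $G'$, we have $\overline{G'} \subseteq \cS$. This gives us two semigroup embeddings of $\overline{G}$ into $\cS$ that agree on $G$: namely $\eta$ itself, and the composite $\iota \circ \overline{\eta_{\restriction G}}$ where $\iota : \overline{G'} \hookrightarrow \cS$ is inclusion. The composite $\Phi := (\overline{\eta_{\restriction G}})^{-1} \circ \eta$ — wait, this only makes sense if $\eta$ lands in $\overline{G'}$; to set it up properly I would instead consider $\Psi := \eta^{-1} \circ \iota \circ \overline{\eta_{\restriction G}} : \overline{G} \to \overline{G}$. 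This $\Psi$ is an injective semigroup endomorphism of $\overline{G}$ (composite of the semigroup homomorphism $\overline{\eta_{\restriction G}}$, the inclusion, and the isomorphism $\eta^{-1}$), and on $G$ it acts as $\eta^{-1} \circ \eta_{\restriction G} = \mathrm{id}_G$. Hence $\Psi$ is an injective element of $\End(\overline{G})$ fixing $G$ pointwise, so by hypothesis $\Psi = \mathrm{id}_{\overline{G}}$. Unwinding, this says $\iota \circ \overline{\eta_{\restriction G}} = \eta$, i.e. $\eta = \overline{\eta_{\restriction G}}$ as maps $\overline{G} \to \cS$; in particular $\eta$ is surjective onto $\overline{G'}$, so $\overline{G'} = \cS$, and $\eta$ is a homeomorphism because $\overline{\eta_{\restriction G}}$ is.

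The step I expect to be the main obstacle is the verification that $\Psi$ (equivalently, the identification $\eta = \overline{\eta_{\restriction G}}$) is legitimately an element of $\End(\overline{G})$ to which the hypothesis applies: one must check that $\overline{\eta_{\restriction G}}$ genuinely maps into $\cS$ (using that $\cS$ is closed and the extension in Proposition~\ref{prop:Lascarextension} is realized inside $\overline{G'} \subseteq \Omega^\Omega$), that $\Psi$ is well-defined (i.e. $\iota \circ \overline{\eta_{\restriction G}}$ lands in the image of $\eta$, which is all of $\cS$ since $\eta$ is onto $\cS$), and that it is injective. Each of these is a short argument, but they rely on correctly tracking where each map lives; the genuinely substantive input is entirely packaged in the hypothesis on $\End(\overline{G})$ and in Proposition~\ref{prop:Lascarextension}, so once the bookkeeping is done the proof closes immediately.
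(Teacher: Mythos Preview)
Your argument is correct and is precisely the standard proof; the paper itself does not prove this lemma but merely quotes it from \cite[Lemma~12]{Reconstruction}, and your reconstruction matches that argument. The bookkeeping points you flag (that $G$ is the unit group of $\overline{G}$, that the invertibles $G'$ of a closed submonoid $\cS\subseteq\Omega^\Omega$ form a closed subgroup of $S_\Omega$, that $\overline{G'}\subseteq\cS$, and that $\Psi:=\eta^{-1}\circ\iota\circ\overline{\eta_{\restriction G}}$ is a well-defined injective endomorphism fixing $G$) are all routine and go through exactly as you describe.
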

The majority of this section is dedicated to proving the assumption on $\End(\overline{G})$ is always satisfied when $G$ is the automorphism group of a saturated structure.


\begin{definition}
Let $G\in\cgroup$. {We call a subset $A\subseteq \Omega$} {\textbf{free} if} the orbits of tuples under the  action $G_{\{A\}}:=\{\alpha\in G\; |\; \alpha(A)=A\}\acts A$  are precisely the restrictions of orbits of $G\acts\Omega$ to $A$. We say that $A$ is {\textbf{Galois-closed} if} the pointwise stabilizer $G_A$ of $A$ in $G$ acts without fixed points on $\Omega \setminus A$. Finally,  $f\in\tcl{G}$ is free Galois-closed if its image is.
\end{definition}
We remark that \mic{the notion of a free Galois-closed function}
 is a translation of the notion of superhomogeneity used in~\cite{pech-saturated} from structures to permutation groups. \mic{We shall now apply some results of that paper which are formulated there in terms of homogeneous structures. To do so, note that every closed permutation group $G$ is the automorphism group of the   homogeneous relational structure  
 which has a relation for every orbit of $G$ on finite tuples.} 

\begin{proposition}
[{\cite{pech-saturated}}]\label{prop:hiddeninpech} Let $G\in\cgroup$ and let $\Phi\in\End(\tcl{G})$ fix $G$ pointwise. Then for all  free Galois-closed $h\in\tcl{G}$ there exists  $g_h\in Z(G)$, such that $\Phi(h)=h\circ g_h$. 
\end{proposition}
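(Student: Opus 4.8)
The plan is to exploit the interplay between the three structural notions introduced just before the statement---freeness, Galois-closedness, and the fact that $\Phi$ fixes $G$ pointwise---to pin down the behaviour of $\Phi$ on a free Galois-closed $h\in\tcl G$. First I would fix such an $h$ and consider its image $S:=h(\Omega)$, which is free and Galois-closed by hypothesis. Because $h$ is an elementary self-embedding, the map $\alpha\mapsto h\alpha h^{-1}$ (defined on $G_{\{S\}}$ via the inverse of $h$ onto its image) witnesses that $h$ intertwines the action $G\acts\Omega$ with the action $G_{\{S\}}\acts S$; freeness of $S$ is precisely what makes this well-behaved at the level of orbits. The key point I would extract is that $\Phi(h)$ and $h$ have the same image: indeed, for any $a\in\Omega$, pick $\alpha\in G$ with $\alpha(h(a))$ ranging over the orbit of $h(a)$; since $\Phi(\alpha)=\alpha$ and $\Phi$ is a monoid homomorphism, $\Phi(h)\circ\alpha = \Phi(h\circ\alpha)$ is controlled by $\Phi(h)$, and a density/closure argument using that $G$ is dense in $\tcl G$ (together with saturation/homogeneity, replacing $\sA$ by the homogeneous $\sA'$ with $\Aut(\sA)=\Aut(\sA')$) forces the image of $\Phi(h)$ to coincide with $S$.

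Once $\Phi(h)$ and $h$ share the image $S$, the composite $g_h:=h^{-1}\circ\Phi(h)$ is a well-defined map $\Omega\to\Omega$ (using that $h$ is injective onto $S$ and $\Phi(h)$ maps into $S$), and it is in fact a bijection of $\Omega$: surjectivity follows because $\Phi(h)$ also has image exactly $S$ and $h$ restricts to a bijection $\Omega\to S$, while injectivity is inherited from $\Phi(h)$ being an embedding. So $g_h\in G$ and $\Phi(h)=h\circ g_h$. It remains to show $g_h\in Z(G)$. For this I would use that $\Phi$ fixes $G$ pointwise together with Galois-closedness of $S$: for any $\alpha\in G$, apply $\Phi$ to the identity $h\circ\alpha = (\text{some element of }\tcl G)$ obtained by first noting $h\alpha h^{-1}$ extends (by freeness, using that orbits over $S$ are restrictions of orbits over $\Omega$, and by saturation) to an element $\beta_\alpha\in G$ with $\beta_\alpha\circ h = h\circ\alpha$; then $\Phi(h\circ\alpha)=\Phi(\beta_\alpha\circ h)=\beta_\alpha\circ\Phi(h)=\beta_\alpha\circ h\circ g_h = h\circ\alpha\circ g_h$, while also $\Phi(h\circ\alpha)=\Phi(h)\circ\Phi(\alpha)=h\circ g_h\circ\alpha$. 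Cancelling the injective $h$ on the left gives $\alpha g_h = g_h\alpha$ for all $\alpha\in G$, i.e.\ $g_h\in Z(G)$.

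The main obstacle I anticipate is the step establishing that $\Phi(h)$ has the same image as $h$ and the careful passage between the permutation-group picture and the structure picture: freeness and Galois-closedness are exactly the hypotheses engineered to make these arguments go through, but verifying that the ``extension of $h\alpha h^{-1}$ to an element of $G$'' genuinely exists requires invoking saturation of $\sA$ (equivalently, homogeneity of the associated $\sA'$) and a compactness/type-counting argument rather than a one-line deduction. I would therefore isolate as a sub-claim: \emph{if $S$ is free and Galois-closed and $h\in\tcl G$ has image $S$, then for every $\alpha\in G$ there is a unique $\beta\in G$ with $\beta h = h\alpha$, and $\beta\in G_{\{S\}}$}; uniqueness uses Galois-closedness (two such $\beta$ differ by an element of $G_S$ fixing $S$ pointwise, which by Galois-closedness acts freely off $S$, yet must fix the image of $h$, forcing it to be the identity), and existence uses freeness plus saturation. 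Granting this sub-claim, the rest of the argument is the short computation above. Everything else---that $g_h$ is a bijection, that it lies in $Z(G)$---is then routine manipulation of injective maps and the homomorphism property of $\Phi$.
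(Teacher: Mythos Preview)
Your overall strategy---reduce to showing $\Phi(h)$ and $h$ share the same image $S$, define $g_h:=h^{-1}\circ\Phi(h)$, and verify centrality via an intertwining element $\beta_\alpha$---is precisely the content hidden behind the paper's citation. But the crucial step, establishing $\mathrm{Im}(\Phi(h))\subseteq S$, has a genuine gap. Your sketch invokes a ``density/closure argument using that $G$ is dense in $\tcl G$'', which would require $\Phi$ to be continuous; yet $\Phi$ is only assumed to be an algebraic endomorphism of $\tcl G$, and indeed non-continuity is exactly the phenomenon one is trying to rule out in the surrounding section. The clean route---and the real role of Galois-closedness, which you do not exploit here---is algebraic: every $\alpha\in G_S$ satisfies $\alpha\circ h=h$, hence $\alpha\circ\Phi(h)=\Phi(\alpha)\circ\Phi(h)=\Phi(\alpha\circ h)=\Phi(h)$, so $\mathrm{Im}(\Phi(h))$ is contained in the fixed-point set of $G_S$, which by Galois-closedness is exactly $S$.

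Your use of Galois-closedness is instead in the uniqueness clause of the sub-claim, and that argument is wrong: two candidates $\beta_1,\beta_2$ with $\beta_i h=h\alpha$ differ by an element of $G_S$, and Galois-closedness says $G_S$ acts \emph{without fixed points} off $S$---it does not say $G_S$ is trivial. So $\beta$ is generally not unique (fortunately uniqueness is not needed for the centrality computation). Also note that the proposition is stated for arbitrary $G\in\cgroup$, so your appeal to saturation for the existence of $\beta_\alpha$ is out of place; one has to argue from freeness and Galois-closedness alone. The paper sidesteps all of this by packaging everything into the intertwining relation $h^*:=\{(\alpha,\beta)\in G^2:\alpha h=h\beta\}$, observing that $h^*\subseteq\Phi(h)^*$ trivially since $\Phi$ fixes $G$, and then invoking the cited results of Pech--Pech which classify free Galois-closed elements of $\tcl G$ up to right multiplication by $Z(G)$ via their $h^*$. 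Recasting your argument in terms of $h^*$ would both fix the gap and make the role of each hypothesis transparent.
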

\begin{proof} For $f\in\tcl{G}$, let 
\[f^*:=\{(\alpha, \beta)\ \vert\; \alpha, \beta\in G, \ \alpha\circ f=f\circ \beta\}\;.\]
Clearly, for any $h\in\tcl{G}$ we have  $\Phi(h)^*=h^*$.
By Corollary 3.12 and Proposition 3.13 of~\cite{pech-saturated} this implies that there is some $g_h\in Z(G)$ such that $\Phi(h)=h\circ g_h$.  
\end{proof}

\mic{We shall now consider automorphism groups of saturated structures; these are always also automorphism groups of saturated homogeneous structures in a countable language whose theory has quantifier elimination (see, e.g.,~\cite[Lemma 2.6]{pech-saturated}). 
This allows us to apply results from~\cite{pech-saturated}, where such structures are called smooth.} 
Our key observation is that their  main results 
 rely on a correspondence between maps in $\End(\overline{G})$ which fix $G$ pointwise and homomorphisms $\phi:\overline{G}\to Z(G)$ mapping $G$ to $1$:

\begin{proposition}\label{pro:whatphiislike} Let $G\in\csatgroup$ and let $\Phi\in\End(\overline{G})$ fix $G$ pointwise. Then, there is a semigroup homomorphism $\phi:\overline{G}\to Z(G)$ such that $\phi(G)=\{1\}$ and for each $f\in\overline{G}$
\[\Phi(f)=\phi(f) \circ f.\]
\end{proposition}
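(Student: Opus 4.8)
The statement promotes the pointwise data supplied by Proposition~\ref{prop:hiddeninpech} — namely that on every free Galois-closed $h$ the map $\Phi$ acts as right-translation by some central element $g_h$ — into a global, multiplicative object: a semigroup homomorphism $\phi\colon\overline{G}\to Z(G)$ recording the ``twist''. The plan is first to show that $\overline{G}$ contains enough free Galois-closed elements to pin down $\phi$ everywhere, then to check that the assignment $f\mapsto g_f$ is well-defined and multiplicative, and finally to extend it from the free Galois-closed elements to all of $\overline{G}$.

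First I would record the basic structural facts about free Galois-closed elements in a saturated $G$. Since $G=\Aut(\sA')$ for a saturated homogeneous $\sA'$, one can realize, over any finite set, a copy of a free Galois-closed substructure; concretely, the image of a sufficiently ``generic'' elementary self-embedding is free (its induced orbits are restrictions of the ambient orbits, by homogeneity/saturation) and Galois-closed (the pointwise stabilizer of the image moves every outside point, again by saturation). The two properties I need are: (a) every $f\in\overline{G}$ can be written as $f=h\circ k$ with $h,k$ free Galois-closed (e.g. by composing $f$ with a generic elementary embedding and absorbing, or by factoring through a free Galois-closed copy containing $\operatorname{im}(f)$); and (b) for free Galois-closed $h$, the central element $g_h$ with $\Phi(h)=h\circ g_h$ is \emph{uniquely} determined — this is where Galois-closedness is used: if $h\circ g=h\circ g'$ with $g,g'\in G$ then $g(g')^{-1}$ fixes $\operatorname{im}(h)$... wait, rather $h g = h g'$ forces $g=g'$ directly since $h$ is injective. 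So uniqueness of $g_h$ is immediate from injectivity of $h$, and the real content of Galois-closedness will be in the well-definedness argument for $\phi$ on arbitrary $f$.

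Next, define $\phi$ on free Galois-closed elements by $\phi(h):=g_h$, and verify multiplicativity there: given free Galois-closed $h_1,h_2$ with $h_1h_2$ also free Galois-closed, compute
\[
(h_1h_2)\circ g_{h_1h_2}=\Phi(h_1h_2)=\Phi(h_1)\Phi(h_2)=h_1 g_{h_1} h_2 g_{h_2}=h_1 h_2 (h_2^{-1}g_{h_1}h_2) g_{h_2},
\]
and since $g_{h_1}\in Z(G)$ and $h_2\in\overline{G}$ one checks $h_2^{-1}g_{h_1}h_2$ — interpreted appropriately, using that $g_{h_1}$ central in $G$ commutes past any element of $\overline G$ because its action is ``uniform'' (it commutes with all of $G$, which is dense, and translation is continuous) — equals $g_{h_1}$, giving $g_{h_1h_2}=g_{h_1}g_{h_2}$ by injectivity. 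Also $\phi(\alpha)=1$ for $\alpha\in G$ since $\Phi$ fixes $G$ pointwise and $\alpha$ is invertible (so $\alpha\circ g_\alpha=\alpha$ forces $g_\alpha=1$), and every $\alpha\in G$ is free Galois-closed. Then for an arbitrary $f\in\overline{G}$, using a factorization $f=h\circ k$ into free Galois-closed pieces, I would \emph{define} $\phi(f):=\phi(h)\phi(k)$ and show this is independent of the factorization: two factorizations can be compared by inserting elements of $G$ and invoking the already-established identities together with the uniqueness from injectivity; this independence, plus $\Phi(f)=\Phi(h)\Phi(k)=h g_h k g_k = f\cdot(\text{central twist})=\phi(f)\circ f$, gives both the formula and multiplicativity of $\phi$ on all of $\overline{G}$. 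That $\phi(f)\in Z(G)$ follows because it is a product of central elements.

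The main obstacle I anticipate is the well-definedness of $\phi$ on non-free-Galois-closed elements, i.e. showing the twist $g_f$ read off from $\Phi(f)=f\circ g_f$ (or via a factorization) does not depend on choices. The delicate point is commuting a central element $g\in Z(G)$ past an \emph{arbitrary} $f\in\overline{G}$: $g$ commutes with all of $G$, $G$ is dense in $\overline G$, composition is continuous on $\overline G$, so $f\circ g=g\circ f$ as endomorphisms of $\Omega$ — but one must be slightly careful that ``$\circ g$'' on the right and ``$g\circ$'' on the left make sense as the same abstract operation, which is exactly what Galois-closedness of the relevant images guarantees (the twist lives on the image and can be transported). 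Making this transport rigorous — essentially re-deriving the relevant parts of Corollary 3.12 / Proposition 3.13 of~\cite{pech-saturated} in the purely group-theoretic formulation — is where the real work lies; everything else is bookkeeping with injectivity and density.
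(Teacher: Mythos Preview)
Your overall strategy---use Proposition~\ref{prop:hiddeninpech} on free Galois-closed elements, then extend to all of $\overline{G}$---is on the right track, and you correctly identify the two key facts that $Z(G)\subseteq Z(\overline{G})$ (by density and continuity of composition) and that injectivity of elements of $\overline{G}$ gives left-cancellation. However, the specific factorization you rely on in (a), namely $f=h\circ k$ with \emph{both} $h$ and $k$ free Galois-closed, is not the one supplied by the literature, and your sketched justifications do not establish it. ``Factoring through a free Galois-closed copy containing $\operatorname{im}(f)$'' would at best give a left factor $h$ whose image is that copy, but the right factor $k$ would then have image $\operatorname{im}(f)$ itself, which need not be free Galois-closed. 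This gap then forces you into the well-definedness argument you flag as the main obstacle, and your sketch there (``compare factorizations by inserting elements of $G$'') is not an argument.

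The paper's route sidesteps all of this by using the decomposition actually provided by~\cite[Proposition~5.1]{pech-saturated}: for each $f\in\overline{G}$ there exist free Galois-closed $g,h\in\overline{G}$ with $g=h\circ f$ (so $f$ is a \emph{right} factor, not a product). Now $\Phi(g)=g\circ z_g$ and $\Phi(h)=h\circ z_h$ are known from Proposition~\ref{prop:hiddeninpech}; applying $\Phi$ to $g=h\circ f$ and cancelling the injective $h$ on the left yields $\Phi(f)=z_h^{-1}z_g\circ f$ directly, with $z_h^{-1}z_g\in Z(G)$. Once one knows that for every $f$ there is \emph{some} $\phi(f)\in Z(G)$ with $\Phi(f)=\phi(f)\circ f$, both well-definedness and multiplicativity fall out from the same move: centrality lets one rewrite $\phi(f)\circ f$ as $f\circ\phi(f)$, and then injectivity of $f$ (respectively $f\circ g$) gives cancellation. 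So the extra layer of factoring $f$ itself into free Galois-closed pieces, and the attendant well-definedness headache, never arise.
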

\begin{proof} 

For any $f\in\tcl{G}$,
\mic{by~\cite[Lemma 2.6 and Proposition 5.1]{pech-saturated}}, we can take free Galois-closed $g,h\in \overline{G}$ 
 such that $g=h\circ f$. By {Proposition~\ref{prop:hiddeninpech}}, there are $z_g, z_h\in Z(G)$ such that $\Phi(g)=g\circ z_g$ and $\Phi(h)=h\circ z_h$. Now, since $\Phi$ is a homomorphism, 
\[h\circ f\circ z_g=g\circ z_g=\Phi(g)=\Phi(h\circ f)=\Phi(h)\Phi(f)=h\circ z_h \circ \Phi(f).\]
Since $h$ is injective and $z_h, z_g\in Z(\overline{G})$ (being in $Z(G)$), we get
\[\Phi(f)=z_h^{-1} \circ z_g \circ f\;.\]
    In particular, we can set $\phi(f)=z_h^{-1} \circ z_g$. Since $\Phi$ fixes $G$ pointwise, we must have that $\phi(G)=1$. We now claim that $\phi:\overline{G}\to Z(G)$ is a homomorphism. To see this, note that for $f,g\in\overline{G}$, since $\phi(f), \phi(g)\in Z(G)$, 
    \[(f\circ g )\circ (\phi(f)\circ \phi(g))= (\phi(f)\circ f)\circ (\phi(g)\circ g)=\Phi(f\circ g)=(f \circ g) \circ (\phi(f\circ g))\;,\]
    which by injectivity of $f\circ g$ implies $\phi(f\circ g)=\phi(f)\circ \phi(g)$ as required.
\end{proof}

The proof of the following lemma will use some basic model-theoretic techniques.

\begin{lemma}\label{lemma:absorbingendomorphism} Let $G\in\csatgroup$. Let $h\in \tcl{G}$. Then there are $f\in\tcl{G}$ and $\alpha,\beta\in G$ such that 
\[\alpha \circ f \circ h=f\circ \beta.\]
\end{lemma}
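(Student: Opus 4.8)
The statement to prove is Lemma~\ref{lemma:absorbingendomorphism}: for $G \in \csatgroup$ and any $h \in \tcl{G}$, there exist $f \in \tcl{G}$ and $\alpha, \beta \in G$ with $\alpha \circ f \circ h = f \circ \beta$. The underlying picture is that $h$ is an elementary self-embedding of the canonical (saturated, homogeneous) structure $\sA$ with $G = \Aut(\sA)$, and $f$ should be chosen to ``collapse'' the image of $h$ back onto a copy of $\sA$ on which $\beta$ acts as the conjugate of $\alpha$. The plan is to work model-theoretically inside $\sA$, exploiting saturation heavily.

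First I would reformulate the goal. Writing $B := h(\Omega) \subseteq \Omega$, the map $h$ is an elementary isomorphism from $\sA$ onto the (elementary) substructure $\sA{\upharpoonright}B$. I want $f \in \tcl{G} = \EEmb(\sA)$ together with automorphisms $\alpha,\beta$ such that the square commutes: $\alpha \circ f \circ h = f \circ \beta$, equivalently $f \circ h$ intertwines $\beta$ (on the source copy) with $\alpha$ (on the target copy). The natural strategy is: pick $\alpha \in G$ essentially freely; let $\beta := (f\circ h)^{-1}\,\alpha\,(f\circ h)$ be the ``pullback'' of $\alpha$ — but this only makes sense if $\alpha$ preserves $\mathrm{im}(f\circ h)$ setwise and the pullback is an automorphism of $\sA$. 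So the real content is to construct $f$ so that $\mathrm{im}(f \circ h)$ is a setwise-invariant (under some nontrivial $\alpha$), free, Galois-closed subset whose induced structure is isomorphic to $\sA$ via $f\circ h$. Concretely: take any $\alpha \in G \setminus \{1\}$; I want to build, by a back-and-forth / compactness argument using saturation, an elementary embedding $f$ of $\sA$ into itself such that $f(B)$ is $\alpha$-invariant and the restriction of $\alpha$ to $f(B)$, transported back via $(f\circ h)^{-1}$, is an automorphism $\beta$ of $\sA$. Because $\sA$ is saturated (so homogeneous-like in the sense that partial elementary maps extend), such a back-and-forth construction should go through: one enumerates $\Omega$ and alternately ensures $f$ is defined on the next point (using saturation to realize the required type) and ensures $\alpha$-closure of the image.

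Alternatively — and this may be cleaner — I would use Proposition~\ref{prop:Lascarextension} and results like~\cite[Proposition 5.1]{pech-saturated} already invoked above: free Galois-closed elements are cofinal/abundant in $\tcl{G}$ in a strong sense, and the structure on a free Galois-closed image is a ``full'' copy of $\sA$. So: first replace $h$ by $g := f_0 \circ h$ for a suitable $f_0 \in \tcl{G}$ making $g$ free Galois-closed (using the cited abundance of free Galois-closed elements — note the final sentence of the Definition block: $f \in \tcl G$ is free Galois-closed if its image is, and the proof of Proposition~\ref{pro:whatphiislike} already uses that one can multiply on the left into free Galois-closed position). On the free Galois-closed image $S := g(\Omega)$, freeness means $G_{\{S\}}$ realizes on $S$ exactly the orbits of $G$ on $\Omega$, so there is an abundance of elements of $G_{\{S\}}$; pick a nontrivial one, call it $\alpha$. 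Galois-closedness ensures $G_S$ acts freely off $S$, which is the ingredient that lets one pull $\alpha{\upharpoonright}S$ back to a well-defined automorphism $\beta := g^{-1}\alpha g \in G$ of $\sA$ (this is exactly the kind of computation behind $f^* = \{(\alpha,\beta): \alpha f = f\beta\}$ appearing in the proof of Proposition~\ref{prop:hiddeninpech}). Then $\alpha \circ g = g \circ \beta$, i.e.\ $\alpha \circ f_0 \circ h = f_0 \circ h \circ \beta$; rearranging and using that $\beta, h$ commute appropriately — wait, here one must be careful: we get $\alpha \circ f_0 \circ h = f_0 \circ (h \circ \beta)$, which is not literally of the form $f \circ \beta$. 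To fix this, note $h \circ \beta = \beta' \circ h$ need not hold, so instead I would arrange things the other way: choose $\alpha$ in $G_{\{S\}}$ where $S = f_0(h(\Omega))$, set $f := f_0$, and observe $\alpha \circ f \circ h$ has image inside $S$, while $f \circ h$ has image exactly $S$; since $(f\circ h)^{-1}\circ\alpha\circ(f\circ h)$ is then an elementary permutation of $\Omega$ — an automorphism, using Galois-closedness to see it is a genuine bijection — call it $\beta$, and we get precisely $\alpha \circ f \circ h = f \circ h \circ \beta = f \circ (h\beta)$... the mismatch between $f\circ(h\beta)$ and $(f\circ\text{something})$ is only cosmetic if I instead define the target differently.

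The main obstacle, and where I would spend the most care, is this bookkeeping about \emph{which} composite gets the automorphism on the right: the clean identity $\alpha \circ g = g \circ \beta$ one gets from a free Galois-closed $g$ puts $\beta$ immediately to the right of $g$, whereas the lemma wants $\alpha \circ f \circ h = f \circ \beta$ with $\beta$ to the right of $f$, not of $f \circ h$. Resolving this requires choosing $f$ so that it is $f$ \emph{itself} (not $f \circ h$) whose image is the free Galois-closed, $\alpha$-invariant set — equivalently, one wants $f \circ h$ to have the \emph{same} image as $f$, which forces $h$ to be ``absorbed''. That is genuinely the point of the name ``absorbing endomorphism'': one needs $\mathrm{im}(f \circ h) = \mathrm{im}(f)$, and then any $\alpha$ preserving $\mathrm{im}(f)$ pulls back along $f$ (injective) to a bijection $\beta$ of $\Omega$ with $\alpha \circ f = f \circ \beta$, hence $\alpha \circ f \circ h = f \circ \beta \circ h'$... no — rather, one picks $f$ with $f\circ h$ and $f$ having equal image, so that $\beta := f^{-1}\circ\alpha\circ f \circ h$ makes sense as a map $\Omega\to\Omega$ and equals an elementary self-embedding, and then $f\circ\beta = f\circ f^{-1}\circ\alpha\circ f\circ h = \alpha\circ f\circ h$. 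So the crux is: \textbf{build $f \in \tcl{G}$ with $\mathrm{im}(f\circ h) = \mathrm{im}(f)$ and $\mathrm{im}(f)$ invariant under some $\alpha\in G$.} I would do this by a back-and-forth over $\omega$ steps: maintain a finite partial elementary map approximating $f$, and at each stage use saturation to (i) extend $f$ to cover the next element of $\Omega$, (ii) ensure the chosen $\alpha$-orbit of each new value of $f$ is hit by $f\circ h$ as well (possible because $h$ is elementary and $\sA$ saturated, so the relevant type is realized), and (iii) ensure $\alpha$-closure of the image. Saturation is exactly what makes all required partial types consistent and finitely satisfiable; the homogeneous canonical structure $\sA'$ with $\Aut(\sA) = \Aut(\sA')$ lets us phrase ``partial elementary'' as ``partial isomorphism of the canonical structure'', keeping the argument combinatorial. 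I expect the verification that the limit $f$ is a genuine elementary self-embedding, and that the induced $\beta$ lands in $G$ rather than merely in $\tcl{G}$, to be the delicate points — Galois-closedness of $\mathrm{im}(f)$ (which I would also build into the back-and-forth, or derive from~\cite[Proposition 5.1]{pech-saturated}) is what upgrades $\beta$ from an elementary embedding to a surjection, hence an automorphism.
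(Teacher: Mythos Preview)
Your proposal contains a genuine gap at its stated ``crux'': you want to construct $f\in\tcl{G}$ with $\mathrm{im}(f\circ h)=\mathrm{im}(f)$. But $f$ is injective, so $f(h(\Omega))=f(\Omega)$ forces $h(\Omega)=\Omega$, i.e.\ $h\in G$ --- precisely the trivial case. Your back-and-forth steps (i)--(iii) are therefore inconsistent for non-surjective $h$: step~(ii) asks that every $\alpha$-orbit through a value of $f$ lie in $\mathrm{im}(f\circ h)$, while step~(iii) asks that $\mathrm{im}(f)$ be the union of these orbits, together yielding $\mathrm{im}(f)\subseteq\mathrm{im}(f\circ h)$ and hence equality. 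The invocation of Galois-closedness does not rescue this: Galois-closedness of $\mathrm{im}(f)$ concerns the action of $G_{\mathrm{im}(f)}$ on the complement, and says nothing about the surjectivity of $\beta=f^{-1}\circ\alpha\circ f\circ h$. What surjectivity of $\beta$ actually requires is $\alpha(\mathrm{im}(f\circ h))=\mathrm{im}(f)$, which for non-surjective $h$ forces $\alpha$ to move the proper subset $\mathrm{im}(f\circ h)$ onto the strictly larger $\mathrm{im}(f)$; in particular $\alpha$ cannot lie in the setwise stabiliser $G_{\{\mathrm{im}(f)\}}$ as your first approach wanted.

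The paper's argument supplies exactly the idea you are missing. Rather than fixing $\alpha$ in advance and building $f$ around it inside $\sA$, one passes to an elementary extension: starting from $\alpha_0:=h^{-1}\colon h(\sA)\to\sA$, one iteratively extends to a chain $\sA_0\preceq\sA_1\preceq\sA_2\preceq\cdots$ with compatible isomorphisms $\alpha_i\colon\sA_i\to\sA_{i+1}$, so that the union $\alpha:=\bigcup_i\alpha_i$ is an automorphism of $\sA':=\bigcup_i\sA_i$ satisfying $\alpha(\sA_0)=\sA_1$. Saturation then gives an isomorphism $f\colon\sA'\to\sA$, and transferring the equation $\alpha\circ f\circ h'=f\circ\beta$ (with $h':=f^{-1}hf$) from $(\sA';h')$ back to $(\sA;h)$ yields the lemma. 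The essential point is that $\alpha$ is built \emph{so as to invert $h$} on its image --- indeed one finds $\beta=\mathrm{id}$ in this construction --- rather than being an arbitrarily chosen automorphism around which one tries to wrap $f$.
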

\begin{proof} Let $h\in \tcl{G}$. {Let $\sA$ be a saturated structure with automorphism group $G$.}
Write $\sA_1$ for $\sA$ and $\sA_0$ for $h(\sA_1)$. Since $h$ is an elementary self-embedding, its inverse is an isomorphism $\alpha_0:\sA_0\to\sA_1$. 
We first construct an elementary extension $\sA_2\succeq \sA_1$ together with an isomorphism $\alpha_1:\sA_1\to \sA_2$ extending $\alpha_0$. To do so, take an enumeration of $\sA_1$. 
Consider the type  $p(z;A_0):=\mathrm{tp}(\sA_1/\sA_0)$ in a variable $z$ of length $\omega$. 
Let $\sA_2$ be a realisation of $p(x;\alpha_0(\sA_0))$ and define $\alpha_1$ to be the map sending every element of $\sA_1$ to its realisation in $\sA_2$. This map is an isomorphism and extends $\alpha_0$. 
By repeating this process, we get an elementary chain
\[\sA_0\preceq \sA_1\preceq\dots\preceq\sA_i\preceq \sA_{i+1}\preceq\dots \]
where, at each stage $i\geq 1$ we have an isomorphism {$\alpha_{i}: \sA_{i}\to\sA_{i+1}$} extending $\alpha_{i-1}$. Write $\sA'$ for the union of this elementary chain and $\alpha$ for $\bigcup_{i\in\mathbb{N}}\alpha_i$. By construction, $\alpha$ is an automorphism of $\sA'$. Since it is a union of a countable chain of countable saturated models, $\sA'$ is still a countable saturated model of the first-order theory of $\sA$. In particular, there is an isomorphism $f:\sA'\to \sA_1$ (cf.~\cite[Lemma 5.2.8]{Tent-Ziegler}). Define $h':\sA'\to\sA'$ as $f^{-1}hf$. Note that, for $a,b\in\sA'$,
\[
    h'(a)=b   \Leftrightarrow f^{-1}hf(a)=b \Leftrightarrow hf(a)= f(b)\;.\]
Hence, since $f$ is an isomorphism between $\sA'$ and $\sA$, it yields an isomorphism  between the structures $(\sA';h')\cong(\sA;h)$ (i.e., the expansions of $\sA'$ and $\sA$ by, respectively, $h'$ and $h$). In particular, to prove our lemma, it is sufficient to prove its conclusion for $(\sA';h')$ rather than for $(\sA, h)$. Now, $fh'=hf$ yields an isomorphism $\sA'\to\sA_0$. Moreover, recall that $\alpha$ is an isomorphism $\sA'\to\sA'$ such that $\alpha(\sA_0)=\alpha_0(\sA_0)=\sA_1$. Thus, since $fh'$, $\alpha$, and $f^{-1}$ are all isomorphisms, $f^{-1}\alpha f h':=\beta\in\Aut(\sA')$. 
Hence,
\[\alpha \circ f\circ h'=f\circ \beta,\]
as desired. 
\end{proof}

\begin{corollary}\label{cor:nophi} Let $G\in\csatgroup$, and let $H$ be any group. Then, any homomorphism $\phi:\overline{G}\to H$ such that $\phi(G)=\{1\}$ is constant.  
\end{corollary}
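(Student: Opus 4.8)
The plan is to combine Proposition~\ref{pro:whatphiislike}-style reasoning with Lemma~\ref{lemma:absorbingendomorphism} directly. Let $\phi:\overline{G}\to H$ be a semigroup homomorphism with $\phi(G)=\{1\}$; I want to show $\phi(h)=1$ for every $h\in\tcl{G}$. Fix $h\in\tcl{G}$ and apply Lemma~\ref{lemma:absorbingendomorphism} to obtain $f\in\tcl{G}$ and $\alpha,\beta\in G$ with
\[
\alpha\circ f\circ h=f\circ\beta.
\]
Applying the homomorphism $\phi$ and using $\phi(\alpha)=\phi(\beta)=1$ (since $\alpha,\beta\in G$), we get $\phi(f)\phi(h)=\phi(f)$ in $H$.

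At this point the conclusion $\phi(h)=1$ would be immediate if $\phi(f)$ were invertible in $H$, but $H$ is an arbitrary abstract group and $\phi(f)$ need not lie in the image's ``unit group'' in any obvious way — however $H$ being a group means \emph{every} element of $H$ is invertible, so from $\phi(f)\phi(h)=\phi(f)=\phi(f)\cdot 1$ we may left-cancel $\phi(f)$ (multiply by $\phi(f)^{-1}$ on the left) to conclude $\phi(h)=1_H$. Since $h\in\tcl G$ was arbitrary and $\phi$ restricted to $G$ is already trivial, $\phi$ is constant (equal to $1_H$) on all of $\overline G$.

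The only genuine content here is Lemma~\ref{lemma:absorbingendomorphism}, which is already available, so there is essentially no obstacle: the corollary is a one-line consequence once one has the absorbing-endomorphism lemma and remembers that cancellation is legitimate in a group. I should double-check one subtlety: the lemma as stated produces $f\in\tcl G$, not necessarily in $\overline G$ — but here $\overline{G}=\tcl G$ (the notation $\tcl{G}$ and $\overline{G}$ denote the same topological closure of $G$ inside $\Omega^\Omega$, which for $G\in\csatgroup$ equals $\EEmb$ of the saturated structure), so $f$ is in the domain of $\phi$ and the computation is valid. Thus the proof is:

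\begin{proof}
Let $\phi:\overline{G}\to H$ be a homomorphism with $\phi(G)=\{1\}$, and let $h\in\tcl{G}=\overline{G}$ be arbitrary. By Lemma~\ref{lemma:absorbingendomorphism}, there are $f\in\tcl{G}$ and $\alpha,\beta\in G$ with $\alpha\circ f\circ h=f\circ\beta$. Applying $\phi$ and using $\phi(\alpha)=\phi(\beta)=1$, we obtain $\phi(f)\phi(h)=\phi(f)$ in $H$. Since $H$ is a group, $\phi(f)$ is invertible; cancelling it yields $\phi(h)=1$. As $h$ was arbitrary, $\phi$ is constant.
\end{proof}
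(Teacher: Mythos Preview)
Your proof is correct and follows exactly the same approach as the paper's: apply Lemma~\ref{lemma:absorbingendomorphism} to an arbitrary $h\in\overline{G}$, push the resulting equation through $\phi$, use $\phi(\alpha)=\phi(\beta)=1$, and cancel $\phi(f)$ in the group $H$. Your discussion of the $\tcl{G}$ versus $\overline{G}$ notation is also accurate.
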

\begin{proof} Let $h\in\overline{G}$. By Lemma~\ref{lemma:absorbingendomorphism} there are $f\in\overline{G}$ and $\alpha, \beta\in G$ such that
\[\alpha\circ f\circ h=f\circ \beta.\]
Since $\phi$ is a homomorphism, 
\[\phi(\alpha)\circ \phi(f)\circ \phi(h)=\phi(f)\circ \phi(\beta).\]
Since $\phi(\alpha)=\phi(\beta)=1$ and $\phi(f)\in H$, this implies that $\phi(h)=1$.
\end{proof}

\begin{theorem}\label{thm:mainthm} Let $G\in\csatgroup$ have automatic homeomorphicity with respect to $\cgroup$. Then, $\overline{G}$ has automatic homeomorphicity with respect to $\csemi$.\\
Moreover, suppose $\theta:\tcl{G}\to\Omega^\Omega$ is an injective  homomorphism and that $\theta(G)$ is closed in $S_\Omega$. Then $\theta$ is a homeomorphism onto its image, which is an element of  $\cclgroup$.
\end{theorem}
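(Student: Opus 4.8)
The plan is to deduce Theorem~\ref{thm:mainthm} from Lemma~\ref{lem:reconstructionlemma} by verifying its hypothesis, namely that the only injective $\Phi\in\End(\tcl{G})$ fixing $G$ pointwise is the identity. So suppose $\Phi\in\End(\tcl{G})$ is injective and fixes $G$ pointwise. By Proposition~\ref{pro:whatphiislike}, there is a semigroup homomorphism $\phi:\tcl{G}\to Z(G)$ with $\phi(G)=\{1\}$ and $\Phi(f)=\phi(f)\circ f$ for every $f\in\tcl{G}$. Now apply Corollary~\ref{cor:nophi} with $H=Z(G)$: since $\phi(G)=\{1\}$, $\phi$ is constant, hence identically $1$ (as $1\in G$). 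Therefore $\Phi(f)=f$ for all $f\in\tcl{G}$, i.e.\ $\Phi=\mathrm{id}$. Lemma~\ref{lem:reconstructionlemma} now gives that $\tcl{G}$ has automatic homeomorphicity with respect to $\csemi$, which is the first assertion.

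For the ``moreover'' part, let $\theta:\tcl{G}\to\Omega^\Omega$ be an injective homomorphism with $\theta(G)$ closed in $S_\Omega$. First I would check that $\theta(G)\subseteq S_\Omega$: every element of $G$ has a two-sided inverse in $\tcl{G}$ (indeed in $G$), and a homomorphism sends invertible elements to invertible elements, so $\theta(g)$ is a bijection for each $g\in G$; thus $\theta_{\upharpoonright G}:G\to\theta(G)$ is an algebraic isomorphism onto a closed subgroup of $S_\Omega$, i.e.\ onto a member of $\cgroup$. Since $G$ has automatic homeomorphicity with respect to $\cgroup$, $\theta_{\upharpoonright G}$ is a homeomorphism. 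Next, observe $\theta(\tcl{G})$ is an algebraic isomorphic copy of $\tcl{G}$; I would like to recognize it as a closed transformation monoid whose invertibles are (a dense copy of) $\theta(G)$. Here the key point is that $\theta(G)$ is dense in $\theta(\tcl{G})$: because $\theta_{\upharpoonright G}$ is a homeomorphism and $G$ is dense in $\tcl{G}$ in its own topology, $\theta(G)$ is dense in $\theta(\tcl{G})$ with the topology transported along $\theta$ — but what we actually need is density in the \emph{subspace} topology from $\Omega^\Omega$, which is exactly what the first part, applied to the algebraic isomorphism $\theta^{-1}:\theta(\tcl{G})\to\tcl{G}$, will deliver once we know $\theta(\tcl{G})$ sits in an appropriate class.

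The cleanest route, and the one I would take, is: let $\cS$ denote the closure of $\theta(\tcl{G})$ in $\Omega^\Omega$. Since $\theta(G)$ is closed in $S_\Omega$ and $G$ is dense in $\tcl{G}$, one argues that the invertibles of $\cS$ are exactly $\theta(G)$ and are dense in $\cS$, so $\cS\in\cclgroup\subseteq\csemi$; then the first assertion of the theorem applied to the isomorphism $\theta:\tcl{G}\to\theta(\tcl{G})$ — more precisely, automatic homeomorphicity of $\tcl{G}$ with respect to $\csemi$ applied after checking $\theta(\tcl{G})$ is already closed (hence equal to $\cS$) — yields that $\theta$ is a homeomorphism onto its image and that this image lies in $\cclgroup$. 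The main obstacle I anticipate is precisely the bookkeeping around this closedness/density step: showing that $\theta(\tcl{G})$ is itself closed in $\Omega^\Omega$ (not merely dense in the closed monoid $\cS$), and that no new invertible elements appear in the closure. For the former, I would use that $\theta$, being a homeomorphism on the dense subgroup $G$ onto the closed group $\theta(G)$, together with the fact that $\tcl{G}$ is completely determined as $\overline{G}$ inside its ambient Polish monoid, forces $\theta(\tcl{G})=\overline{\theta(G)}$ in $\Omega^\Omega$; for the latter, any invertible in $\overline{\theta(G)}$ corresponds under the homeomorphism $\theta_{\upharpoonright G}^{-1}$ extended via Proposition~\ref{prop:Lascarextension} to an invertible of $\tcl{G}$, which lies in $G$. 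This reduces everything to the already-established first assertion and Proposition~\ref{prop:Lascarextension}.
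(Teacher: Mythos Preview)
Your argument for the first assertion is correct and coincides with the paper's proof: combine Proposition~\ref{pro:whatphiislike} with Corollary~\ref{cor:nophi} to get that every $\Phi\in\End(\tcl{G})$ fixing $G$ pointwise is the identity, then invoke Lemma~\ref{lem:reconstructionlemma}.

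For the ``moreover'' part, the paper itself gives no details beyond remarking that, once the rigidity statement above is in hand, the proof is identical to that of \cite[Lemma~4.1]{behrisch-truss-vargas}. Your sketch assembles the same ingredients as that argument---automatic homeomorphicity of $G$ makes $\xi:=\theta_{\upharpoonright G}$ a homeomorphism onto $H:=\theta(G)$, Proposition~\ref{prop:Lascarextension} supplies the Lascar extension $\overline{\xi}:\tcl{G}\to\tcl{H}$, and one then compares $\theta$ with $\overline{\xi}$---but the step you flag as the ``main obstacle'' is a real gap in your outline, not mere bookkeeping. Your plan is to apply the first assertion (automatic homeomorphicity of $\tcl{G}$ with respect to $\csemi$) to the isomorphism $\theta:\tcl{G}\to\theta(\tcl{G})$, but that requires $\theta(\tcl{G})\in\csemi$, i.e.~closed in $\Omega^\Omega$; and your proposed justification (``$\tcl{G}$ is completely determined as $\overline{G}$ inside its ambient Polish monoid, forces $\theta(\tcl{G})=\overline{\theta(G)}$'') does not work: density of $G$ in $\tcl{G}$ cannot be pushed through $\theta$ without already knowing $\theta$ is continuous. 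Likewise, your argument that the invertibles of $\cS:=\overline{\theta(\tcl{G})}$ are exactly $\theta(G)$ only controls the invertibles of $\tcl{H}$, not of $\cS$, and these coincide only after one knows $\cS=\tcl{H}$.

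The non-circular route is to bypass the first assertion entirely and use the rigidity directly to prove $\theta=\overline{\xi}$ (which then gives both that $\theta$ is a homeomorphism and that its image is $\tcl{H}\in\cclgroup$). Concretely, one wants to form $\Phi:=\theta^{-1}\circ\overline{\xi}$ (or its inverse) as an injective endomorphism of $\tcl{G}$ fixing $G$, and conclude $\Phi=\mathrm{id}$. For this composite to land in $\tcl{G}$ one needs the inclusion $\tcl{H}\subseteq\theta(\tcl{G})$ (respectively $\theta(\tcl{G})\subseteq\tcl{H}$), and establishing one such inclusion is precisely the content of the cited lemma; it is not a consequence of the pieces you have written down.
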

\begin{proof}  By Proposition~\ref{pro:whatphiislike} and Corollary~\ref{cor:nophi}, any $\Phi\in\End(\overline{G})$ that fixes every element of $G$ is the identity function on $\tcl{G}$.
Hence $\overline{G}$ has automatic homeomorphicity with respect to $\csemi$ by Lemma~\ref{lem:reconstructionlemma}. {Given the first part, the proof of the second part is identical to that of~\cite[Lemma 4.1]{behrisch-truss-vargas}.}
\end{proof}
Note that the same proof yields that if $G$ has automatic homeomorphicity with respect to a subclass $\mathcal C$ of $\cgroup$ (e.g.~$\caut$), then $\tcl{G}$ has automatic homeomorphicity with respect to the subclass of those monoids in $\csemi$ whose {group of  invertibles belongs to $\mathcal C$ (e.g.~$\cend$).} 
The corollary below now follows by the same argument as in~\cite[Theorem 2.4]{pech-saturated}:
\begin{corollary} Let $G\in\caut$ be $G$-finite. Then $\overline{G}$ has automatic homeomorphicity with respect to $\cclgroup$.
\end{corollary}

\subsection{Automatic homeomorphicity for endomorphism monoids}

Surprisingly, automatic homeomorphicity may fail for the endomorphism monoids of structures whose automorphism groups have automatic homeomorphicity even in very tame cases. The following is an adaptation of~\cite[Theorem 8]{Reconstruction}. Our main contribution is that the structure in the counterexample below is extremely tame. Firstly, it is model-theoretically tame, being $\omega$-stable and a reduct of a finitely homogeneous structure. These structures are very well understood~\cite{CherlinHarringtonLachlan}. Secondly, being $\omega$-categorical and $\omega$-stable, its automorphism group has ample generics~\cite{HodgesHodkinsonLascarShelah}, and so our example is also  tame from the point of view of reconstruction of its automorphism group.

\begin{theorem}\label{thm:ACdoesnotimplyAH}
{There is an $\omega$-categorical structure $\sA$ which is $\omega$-stable and first-order definable from a finitely homogeneous structure such that $\Aut(\sA)$ has automatic continuity with respect to $\csecondgroup$, but $\End(\sA)$ (and hence also the clone $\langle\End(\sA)\rangle$)  has a non-continuous automorphism. In particular, $\End(\sA)$ (and $\langle\End(\sA)\rangle$) do not have automatic homeomorphicity with respect to $\cend$ (and $\cpol$).}
\end{theorem}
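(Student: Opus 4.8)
The plan is to construct $\sA$ explicitly as a well-understood $\omega$-categorical, $\omega$-stable structure carrying a "direction" that is invisible to automorphisms and embeddings but visible to general endomorphisms, and then exhibit a non-continuous automorphism of $\End(\sA)$ by "twisting along that direction". Concretely, I would take a structure built from countably many disjoint copies of a fixed finite (or nice countable) object indexed by $\mathbb{Z}$ (for instance: a countable set partitioned into infinitely many infinite classes $\{P_i : i \in \mathbb{Z}\}$, together with a ternary "betweenness/successor"-type relation encoding the successor pattern $i \mapsto i+1$ on the classes, or some equivalent $\omega$-stable gadget); this is the kind of construction used in~\cite[Theorem 8]{Reconstruction}. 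The first step is to verify the model-theoretic tameness claims: that $\sA$ is $\omega$-categorical (count orbits on tuples), $\omega$-stable (it is built by finitely many finite "coordinatizing" steps over a pure set, so one can invoke the Cherlin--Harrington--Lachlan analysis~\cite{CherlinHarringtonLachlan}, or simply compute that there are countably many types), and first-order definable from a finitely homogeneous structure (exhibit the finite relational homogeneous structure it reduces from). From $\omega$-categoricity plus $\omega$-stability, automatic continuity of $\Aut(\sA)$ with respect to $\csecondgroup$ follows immediately from ample generics~\cite{HodgesHodkinsonLascarShelah, KechrisRosendal}, so that part is free.

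The heart of the argument is the non-continuous automorphism of $\End(\sA)$. The key structural feature I want is that $\End(\sA)$ contains, besides the "shift-respecting" maps, a rich supply of endomorphisms that collapse the $\mathbb{Z}$-index by an arbitrary amount — i.e. for every $k \in \mathbb{Z}$ there should be an endomorphism $s_k$ that behaves like "shift the index by $k$" (this uses that shifting is a homomorphism of the relational structure but, being non-surjective for $k\neq 0$ in the relevant relations or non-injective, is not an automorphism). Then I would define a map $\Xi : \End(\sA) \to \End(\sA)$ by conjugation/substitution that re-indexes using a \emph{wild} (non-continuous, e.g. built from a non-principal ultrafilter or a discontinuous group endomorphism of $(\mathbb{Z},+)$ extended suitably, or more simply a permutation of the index set that is not "locally coherent") permutation $\sigma$ of $\mathbb{Z}$, sending $f \mapsto \sigma \cdot f \cdot \sigma^{-1}$ in the appropriate sense. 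The plan is to check: (i) $\Xi$ is well-defined, i.e. the conjugate of an endomorphism is again an endomorphism — this works precisely because the index-relabelling respects the relational structure as an abstract map even though it is not continuous; (ii) $\Xi$ is a monoid automorphism — clear from the conjugation form; (iii) $\Xi$ is not continuous — this is where $\sigma$'s wildness is used: $\Xi$ must fail to be continuous at some point because it does not respect the pointwise-convergence topology, which I would detect by exhibiting a basic open set $\cU_{(\overline a, b)}$ whose preimage under $\Xi$ is not open, using that $\sigma$ does not agree with any finite partial pattern with cofinitely many "correct" values. Finally, since $\End(\sA)$ has a non-continuous automorphism, it fails automatic homeomorphicity with respect to $\cend$; and because the clone structure $\langle\End(\sA)\rangle$ is generated by $\End(\sA)$, the automorphism $\Xi$ extends to (or induces) a non-continuous clone automorphism, so $\langle\End(\sA)\rangle$ fails automatic homeomorphicity with respect to $\cpol$ as well.

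The main obstacle I anticipate is arranging the structure so that \emph{all three} desiderata hold simultaneously: $\omega$-stability (which forbids, e.g., a dense linear order on the index set and forces the successor-type encoding to be done carefully so no order is definable), genuine first-order definability from a \emph{finite}-language homogeneous structure (which constrains how much coordinatizing machinery one may bolt on), and — most delicately — the existence of enough "index-shifting" endomorphisms together with the wild conjugation being well-defined on $\End(\sA)$. In particular one must ensure that the relations of $\sA$ are preserved by an arbitrary relabelling of the index set $\mathbb{Z}$ (so the relations should only record "same class / different class" and a \emph{successor} relation that, as a binary relation, happens to be preserved or at least homomorphically preserved under the relabellings one conjugates by) — this is a genuine tension, since a literal successor relation is \emph{not} preserved by arbitrary permutations of $\mathbb{Z}$. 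The resolution, following~\cite{Reconstruction}, is to choose the index structure and the encoding so that the maps $\sigma$ we conjugate by form exactly the group (or monoid) of abstract symmetries of the index gadget that fails to be topologically closed/continuous when transported to $\End(\sA)$; pinning down that gadget precisely, and checking conjugation lands back in $\End(\sA)$, is the step that will require the most care.
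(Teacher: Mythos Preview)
Your plan has a fatal gap at its core: any automorphism of a transformation monoid of the form $f\mapsto \sigma f\sigma^{-1}$ for a bijection $\sigma$ of the underlying set $\Omega$ is \emph{automatically} a homeomorphism for $\pw$. Indeed, the preimage of the subbasic open set $\cU_{(a,b)}$ under such a map is $\cU_{(\sigma^{-1}(a),\sigma^{-1}(b))}$, which is open no matter how ``wild'' $\sigma$ is as a permutation, since $\Omega$ carries the discrete topology. The same applies if $\sigma$ is a permutation of your index set $\mathbb{Z}$ lifted to a permutation of $\Omega$ by shuffling the blocks: you still get a set-induced, hence continuous, monoid automorphism. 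Your proposed sources of wildness are also illusory in this setting: every group endomorphism of $(\mathbb{Z},+)$ is multiplication by an integer, and an ultrafilter on $\mathbb{Z}$ does not produce a bijection of $\Omega$ whose conjugation action is discontinuous. So conjugation cannot deliver the non-continuous automorphism the theorem requires; the tension you identify at the end of your plan is not a technical obstacle but a symptom of the approach being unworkable.

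The paper's construction (an adaptation of \cite[Theorem~8]{Reconstruction}) is of a different shape. One takes two disjoint copies $\sB_1,\sB_2$ of the structure carrying an equivalence relation with infinitely many classes of size $2$, together with an isolated point $\star$, with unary predicates naming $B_1,B_2$. The relevant monoid is $\mathcal T=\overline{\Aut(\sA')}\cup\mathcal F$, where $\mathcal F$ consists of the $E$-preserving maps collapsing $B_2\cup\{\star\}$ to $\star$ and sending $B_1$ into $B_2\cup\{\star\}$. The non-continuous automorphism is \emph{conditional left composition by a central element}: let $\alpha$ be the automorphism flipping the two points of every $E$-class (so $\alpha$ is central in $\overline{\Aut(\sA')}$), and set $\eta(f):=\alpha f$ if $f\in\mathcal F$ misses infinitely many $E$-classes of $B_2$, and $\eta(f):=f$ otherwise. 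Centrality of $\alpha$ and the behaviour of the ``misses infinitely many classes'' condition under composition make $\eta$ a monoid automorphism; non-continuity is witnessed by a sequence in $\mathcal F\setminus\mathcal T_\infty$ converging to a non-constant element of $\mathcal T_\infty$. The point is that $\eta$ is \emph{not} conjugation by any bijection of $\Omega$: it is the identity on a dense set and a non-trivial twist on its (non-open) complement.
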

\begin{proof}
    Let $\sB$ be the structure with an equivalence relation $E$ with infinitely many classes of size 2. Let $\sA'$ consist of two disjoint copies $\sB_1, \sB_2$ of $\sB$ plus an additional new element $\star$ which is (only) related to itself in $E$, and which has additional  unary predicates $B_1,B_2$ for the domains of $\sB_1, \sB_2$. It is easy to see that $\sA'$ is finitely homogeneous (hence, $\omega$-categorical) and $\omega$-stable. In particular, it has ample generics by~\cite{HodgesHodkinsonLascarShelah} and so $\Aut(\sA')$ has automatic continuity with respect to $\csecondgroup$. Let
    \begin{multline*}
 Q:=\{f: A'\to A' \vert \ f(B_1)\subseteq B_2\cup\{\star\},\  f(B_2\cup\{\star\})=\{\star\},\; f \text{ preserves } E\}.
\end{multline*}
Consider the closed monoid  $T:=\tcl{\Aut(\sA')}\cup Q$; its invertibles are precisely the elements of $\Aut(\sA')$.  Let 
$T_{\infty}$ consist of those elements of $Q$ \mic{for which the complement of their image contains an infinite number of $E$-classes in $B_2$.}
Let $\alpha\in\Aut(\sA')$ flip the two elements of any $E$-class (and fix $\star$). 
We define the map $\eta:{T}\to {T}$ by
\[\eta(f):=\begin{cases}
    f & f\notin {T}_{\infty}\;;\\
    \alpha f  & f\in {T}_{\infty}\;.
\end{cases}\]
Using that $\alpha$ commutes with all elements of $\tcl{\Aut(\sA')}$, it is easy to see that $\eta$ is an automorphism of $T$. However, taking a sequence $(f_n)_{n\in\mathbb{N}}$ outside of $T_{\infty}$ converging to a non-constant element $g$ in $T_{\infty}$, $(\eta(f_n))_{n\in\mathbb{N}}$ converges to $g$ but $\eta(g)=\alpha g \neq g$. Hence $\eta$ is not continuous. \mic{Finally, any countable structure $\sA$  with endomorphism monoid ${T}$ will satisfy the statement of the theorem.}
\end{proof}

In spite of the above negative result, there are still  reasonable situations where one may {hope 
to lift automatic homeomorphicity} results from $\Aut(\sA)$ to $\End(\sA)$. In particular,~\cite{behrisch-truss-vargas, truss2021reconstructing} developed a sophisticated technique which they use to show automatic homeomorphicity for the endomorphism monoids of $(\mathbb{Q}\mic{;}\leq)$ and  some structures first-order definable in it with different automorphism groups. 
 They make use of the small index property for $\Aut(\sA)$ and some additional information on the structure of its open subgroups, which may be provided by some (possibly weak) form of elimination of imaginaries.\\ 


Their main idea is as follows: consider an isomorphism $\theta:\End(\sA)\to\cS\in\csemi$. This can be viewed as a faithful monoid action of $\End(\sA)$ on $\Omega$, the domain of $\cS$; in the following discussion, to distinguish this action from the original action of $\End(\sA)$, we write $A$ for the domain of the original action. The aim is to directly show that $\theta$ is a homeomorphism by studying how the induced action of $G:=\Aut(\sA)$ on $\Omega$ can extend to $\theta$. By the orbit-stabilizer theorem, for any $x\in\Omega$ there is a bijection between the orbit $X:=\mathrm{Orb}(x)$  of $x$ under  $G\acts\Omega$ and 
the left-cosets in $G$ of the stabilizer $G_x=\{g\in G\;\vert\; \theta(g)(x)=x\}$. Since $X$ is countable, $G_x$ has countable index. Thus, if $G$ has the SIP we know that $G_x$ is open. Since the open subgroups of $G$ are precisely those containing the pointwise stabilizer $G_B$ of a finite set $B$ under the action $G\acts A$, we can  conclude that $G_x\supseteq G_B$ for some finite $B$. With additional information on the open subgroups of $G$, however, we can draw stronger conclusions.\\

We say that $G$ has \textbf{elimination of imaginaries} if every open subgroup of $G$ is equal to $G_B$ for some finite $B$, and that it has  \textbf{weak elimination of imaginaries} if every open subgroup of $G$ is sandwiched between $G_B$ and the setwise stabiliser $G_{\{B\}}$ for some finite $B$. Several 
$\omega$-categorical structures have weak elimination of imaginaries, e.g., $(\mathbb{N}, =)$, the random graph, and infinite vector spaces over finite fields. It is easy to prove that if $G$ has weak elimination of imaginaries, then for any open subgroup $H\leq G$ there is a finite algebraically closed $B\subseteq \mic{\Omega}$ and $L\leq G_{\{B\}}\acts B$ such that 
\[H=G_{(B, L)}:=\{f\in G \vert f_{\upharpoonright B}\in L\}\;.\]
Groups with the SIP and weak elimination of imaginaries are said to have the \textbf{strong small index property} (SSIP)~\cite{Oligo}.

Let us now return to analysing an isomorphism $\theta:\End(\sA)\to\cS\in\csemi$ assuming that $G$ has the SSIP. In that situation, for any $x\in \Omega$ we have $G_x=G_{(B, L)}$ as above. In particular, we can identify the orbit $X=\mathrm{Orb}(x)$ with the orbit of the pair $(B, L)$ under the natural action of $G$ where for $g\in G$, $g(B, L)=(g B, g L\mic{g^{-1}})$; here, $g L\mic{g^{-1}}$ is a subgroup of $G_{\{gB\}}\acts gB$. Doing this for all $G$-orbits, we can rewrite the action of $G$ on $\Omega$ as the action of $G$ on a disjoint union of $G$-orbits of pairs $(B_i, L_i)_{i\in I}$, where $I$ is a countable set. Using Theorem~\ref{thm:mainthm} and the proof of~\cite[Lemma 4.2]{behrisch-truss-vargas}, we can then also describe the action of $\overline{G}$ on $\Omega$: {for any $f\in\overline{G}$ and any $(gB_i, gL_i\mic{g^{-1}})$, where  $g\in G$ and $i\in I$,  we obtain $f(gB_i, gL_i)=(hgB_i, hgL_i\mic{g^{-1} h^{-1}})$ where $h\in G$ is an arbitrary element which agrees with $f$ on $gB_i$.} 

For the final step, which requires the most work, one needs to describe how $\End(\sA)$ acts on the orbits of the action $G\acts \Omega$ induced by $\theta$ and deduce that this action yields that $\theta$ is a homeomorphism. 

\begin{theorem}[\cite{behrisch-truss-vargas}]\label{thm:reductsofq} \mic{$\End(\mathbb{Q}; \leq)$ has automatic homeomorphicity with respect to $\csemi$. Moreover, $\Pol(\mathbb{Q}; \leq)$ has automatic homeomorphicity with respect to $\cclone$.} 
\end{theorem}

\mic{The rough idea for the case of $\Pol(\mathbb{Q}; \leq)$ is the following: openness follows generally  from the presence of constant functions by the work of~\cite{Reconstruction} (specifically, Proposition~\ref{prop:constants} in the present paper); showing continuity is  more complex and exploits the machinery for monoids described above. 
}

\mic{~\cite{truss2021reconstructing} use similar techniques to deduce automatic homeomorphicity of the endomorphism monoids {and polymorphism clones} of some further structures with a first-order definition in $(\mathbb{Q}; \leq)$.}

\section{On the number of Polish topologies}
In this subsection, we study Polish topologies on closed oligomorphic monoids. Whilst all monoids in $\ceemb$ have at least two Polish semigroup topologies, several interesting monoids in $\cend$ have the Unique Polish Property.

\subsection{Multiple Polish topologies on monoids of elementary embeddings}
\begin{lemma}[{\cite[Proposition 5.1]{EJMPP}}]\label{lem:atleasttwo} Let $\cS\in\csemi$ be a monoid of injective functions whose group of invertible elements is not closed in $\Omega^\Omega$. Then, there are at least two distinct Polish semigroup topologies on $\cS$. In particular, this is the case for any $\cS\in\ceemb$.
\end{lemma}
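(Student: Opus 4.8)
The plan is to manufacture a second Polish semigroup topology on $\cS$ by transporting the topology of pointwise convergence along a \emph{discontinuous} automorphism of $\cS$, so the real work is to build such an automorphism using the hypothesis that the group $G$ of invertibles of $\cS$ is not closed in $\Omega^\Omega$. First I would observe that since $G$ is not closed, the closure $\tcl{G}$ (taken inside $\Omega^\Omega$) strictly contains $G$; pick $h\in\tcl{G}\setminus G$. Because every element of $\cS$ is injective and $h\in\tcl{G}\subseteq\cS$ (using that $\cS$ is closed and contains $G$), $h$ is an injective non-surjective element of $\cS$ which is a limit of invertibles. The key structural fact I want is that left (or right) multiplication by $h$, or more precisely a carefully chosen map built from $h$, is a \emph{bijection} of $\cS$ that is an isomorphism of the pure semigroup but fails to be a homeomorphism with respect to $\pw$: this is exactly the kind of phenomenon exhibited in Theorem~\ref{thm:ACdoesnotimplyAH}, and a clean route is to show that the monoid $\cS$ itself, viewed abstractly, admits a permutation $\eta$ of its underlying set preserving multiplication but not continuous, by exploiting that $G$ being non-closed means $G$ is not a $G_\delta$-definable piece of $\cS$ in a way detectable by the topology alone.

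Concretely, here is the route I expect to work. Let $\tau$ denote $\pw$ on $\cS$. Define a new topology $\tau'$ on $\cS$ by declaring $U$ to be $\tau'$-open iff $\eta(U)$ is $\tau$-open, where $\eta:\cS\to\cS$ is a semigroup automorphism to be constructed. Since $\eta$ is an abstract isomorphism, $(\cS,\tau')$ is again a Polish topological semigroup (Polishness, separability, and continuity of multiplication all transport along $\eta$); and $\tau\neq\tau'$ precisely when $\eta$ is not a $\tau$-homeomorphism. So everything reduces to: \emph{construct a discontinuous semigroup automorphism of $\cS$}. To do this I would mimic the construction in the proof of Theorem~\ref{thm:ACdoesnotimplyAH}: using $h\in\tcl{G}\setminus G$, find a sequence $(f_n)$ in $\cS$ converging in $\tau$ to some $f\in\cS$, and an element $\sigma$ (a nontrivial element of the centralizer of the relevant sub-semigroup, playing the role of $\alpha$ there) such that setting $\eta$ to be identity off a suitable "large" subset and multiplication by $\sigma$ on it yields an automorphism. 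The non-closedness of $G$ is what guarantees the "large" set on which we twist is genuinely there: it is (the closure of) a coset-type set separating $G$ from $\tcl{G}$, and it is an ideal-like or union-of-$\mathcal{R}$-classes subset invariant enough for $\eta$ to respect composition.

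The main obstacle I anticipate is pinning down the exact invariant subset of $\cS$ on which to perform the twist and the exact element $\sigma$ used to twist, so that (i) $\eta$ genuinely preserves the semigroup operation — this requires $\sigma$ to commute with enough of $\cS$ and the twist-set to be closed under left/right multiplication in a compatible way — and (ii) $\eta$ is genuinely $\tau$-discontinuous, which needs a witnessing convergent sequence crossing the boundary of the twist-set, exactly as in Theorem~\ref{thm:ACdoesnotimplyAH} where $(f_n)$ lies outside $\mathcal T_\infty$ but converges into it. Since the cited source \cite[Proposition 5.1]{EJMPP} presumably handles this in full generality, I would either extract $\sigma$ from the center of the group of units of $\tcl{G}$ relative to $\cS$ (note $h$ being a non-invertible limit of units forces the centralizing structure needed) or, failing a uniform choice, argue abstractly that a Polish group which is not closed in $\Omega^\Omega$ cannot be "detected" inside its closure-monoid by any single Polish semigroup topology, giving two topologies by a compactness/Baire-category dichotomy. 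The final sentence "in particular, this is the case for any $\cS\in\ceemb$" is then immediate: for $\cS\in\ceemb$ the group of invertibles is oligomorphic and $\cS$ contains non-surjective elementary self-embeddings (as $\cS$ is oligomorphic but the structure is infinite and not a model-complete core up to the trivial case), so $G$ is dense in $\cS\supsetneq G$, hence $G$ is not closed.
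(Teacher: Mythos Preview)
Your approach has a genuine gap: you are trying to prove the lemma by constructing a \emph{discontinuous semigroup automorphism} of $\cS$, but such an automorphism need not exist under the hypotheses. Indeed, Theorem~\ref{thm:mainthm} of this very paper shows that whenever $G\in\csatgroup$ has automatic homeomorphicity with respect to $\cgroup$, the monoid $\overline{G}$ has automatic homeomorphicity with respect to $\csemi$; since $\overline{G}$ is itself an element of $\csemi$, this forces \emph{every} semigroup automorphism of $\overline{G}$ to be a $\pw$-homeomorphism. Concretely, $\Aut(\mathbb{Q},<)$ has the SIP and hence automatic homeomorphicity with respect to $\cgroup$, so $\EEmb(\mathbb{Q},<)$ admits no discontinuous automorphism whatsoever --- yet the lemma applies to it. Your attempt to ``mimic Theorem~\ref{thm:ACdoesnotimplyAH}'' cannot succeed in general because that construction depends on a nontrivial central element $\alpha$ and a very particular ideal structure, neither of which is available for an arbitrary monoid of injections with non-closed units.

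The underlying conceptual error is conflating failure of automatic homeomorphicity (existence of a non-topological isomorphism to something in a given class) with failure of the UPP (existence of a second Polish semigroup topology). A second Polish topology $\tau'$ need not arise by transporting $\pw$ along an automorphism; $(\cS,\pw)$ and $(\cS,\tau')$ may simply be non-isomorphic as topological semigroups. The paper's proof does exactly this: it refines $\pw$ by declaring the sets $\mathcal{W}_a=\{f\in\mathrm{Inj}(\Omega)\mid a\notin\mathrm{Im}(f)\}$ open, obtaining a strictly finer Polish semigroup topology $\tgcl$ on $\cS$. In $\tgcl$ the group of invertibles (the surjective elements) is closed, whereas in $\pw$ it is not by hypothesis, so $\tgcl\neq\pw$. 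No automorphism of $\cS$ relates the two.
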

\begin{proof}[Proof idea] 
Consider the topology $\mathcal{I}_4$ on the monoid of injective functions $\mathrm{Inj}(\Omega)$ generated by $\pw$ and the open sets $(W_a)_{a\in \Omega}$ of the form 
\[W_a:=\{f\in\mathrm{Inj}(\Omega) \ \vert \ a\not\in\mathrm{Im}(f)\}\;.\]
This is a Polish topology on $\mathrm{Inj}(\Omega)$~\cite[Theorem 5.15 \& Section 5.5]{EJMMMP-zariski}. It is easy to see that the induced topology $\tgcl$ on $\cS$ is a Polish semigroup topology in which the group of invertibles is closed, yielding that $\tgcl\neq\pw$.
\end{proof}

Similar ideas were used by~\cite{schindler2023semigroup} to prove, for every $n$, failure of UPP for the endomorphism monoids of the generic graph without independent sets of size $n$  
and of the complete multipartite graph with $\omega$-many blocks of size $n$. 
 Whilst not all endomorphisms of these structures are  injective, there is some $m\in\mathbb{N}$ such that, for any endomorphism, the preimage of any point has size $\leq m$, and this is sufficient to build a similar topology to $\tgcl$ which is distinct from $\pw$.

\begin{definition} 
\pau{Let $G\acts\Omega$. For $A\subseteq \Omega$ infinite, define} \[\acl(A):=\bigcup\{\acl(B)\vert\;  B\subseteq A \text{ finite}\}\;.\]
\pau{We say that $\acl$ forms a \textbf{pregeometry} if it satisfies the exchange property: for all {$a,b\in \Omega$} and $F\subseteq \Omega$, if  $a\in\acl(F\cup\{b\})\setminus\acl(F)$,  then  $b\in\acl(F\cup\{a\})$.}
\end{definition}
\pau{When $\acl$ forms a pregeometry, for $A, B\subseteq\Omega$, we can adequately define the \textbf{dimension} $\dim(B/A)$ of $B$ over $A$ as the smallest cardinal $\kappa$ such that there is $B'\subseteq \Omega$ with $|B'|=\kappa$ and $\acl(B'\cup A)=\acl(B\cup A)$ (see~\cite{white1986theory}).} 
Note that $\acl$ forms a pregeometry in any oligomorphic permutation group with no algebraicity, or in the automorphism group of any vector space over a finite field.

\begin{lemma}\label{lem:countablymany} Let $G\acts\Omega$. \mic{Suppose that $\acl$ is locally finite and forms a pregeometry.} 
Then, there are {at least} countably many Polish semigroup topologies on $\overline{G}$.
\end{lemma}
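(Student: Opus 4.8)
The plan is to mimic the construction in Lemma~\ref{lem:atleasttwo}, but now exploiting the pregeometry to obtain a whole family of distinct Polish semigroup topologies, indexed by $n\in\mathbb{N}$, rather than just one extra topology. The key observation is that when $\acl$ forms a pregeometry, the function $f\mapsto\dim(\Omega\setminus\mathrm{Im}(f))$ (or, more robustly, some coarsening of the codimension of the image) gives a $\overline{G}$-semigroup-invariant piece of data that is \emph{not} continuous with respect to $\pw$; each ``level'' of this invariant can be used to refine $\pw$ into a new Polish semigroup topology.

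Concretely, first I would fix, for each $n\geq 1$, a candidate basis of open sets on $\overline{G}$ by adjoining to $\pw$ the sets
\[
\mathcal{V}^{(n)}_{B}:=\{f\in\overline{G}\ \vert\ \dim(\mathrm{acl}(B)\cap\mathrm{Im}(f))<|B|\}\quad\text{(or a suitable variant)},
\]
ranging over finite independent $B\subseteq\Omega$, together with their complements' interiors as needed to get a topology; more cleanly, one adjoins the sets $\mathcal{W}_a$ of Lemma~\ref{lem:atleasttwo} but weights them via the pregeometry so that the resulting topology ``remembers'' the codimension of the image up to $n$. The second step is to verify that each such refinement $\tau_n$ is (i) a group/semigroup topology — composition remains continuous because $\dim$ of the image is submultiplicative under composition and because $G$ acts by bijections (so elements of $G$ do not change the image), and (ii) Polish — here one transports the argument of~\cite[Theorem 5.15 \& Section 5.5]{EJMMMP-zariski}, realising $(\overline{G},\tau_n)$ as a closed (or $G_\delta$) subspace of $\mathrm{Inj}(\Omega)$ (or $\Omega^\Omega$) equipped with a Polish topology of the same flavour as $\mathcal{I}_4$, using local finiteness of $\acl$ to ensure second countability and the relevant completeness. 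The third step is to separate the topologies: for distinct $m<n$ one exhibits a sequence $(f_k)$ in $\overline{G}$ and a limit $f$ witnessing convergence in $\tau_m$ but not in $\tau_n$, e.g.\ by taking self-embeddings whose images omit an independent set of size exactly $n$ but are $\tau_m$-approximated by embeddings omitting fewer dimensions; this is where the exchange property is essential, since it is what makes ``dimension of the omitted part'' well-defined and strictly monotone along such a sequence.

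I expect the main obstacle to be step (ii): checking that each refined topology $\tau_n$ is genuinely Polish, i.e.\ completely metrizable, rather than merely second countable and semigroup-compatible. The issue is that adding the ``codimension $\leq n$'' type open sets could in principle destroy completeness, just as naively adding $\{f : a\notin\mathrm{Im}(f)\}$ to $\pw$ on all of $\Omega^\Omega$ (rather than on $\mathrm{Inj}(\Omega)$) would. The resolution should be to identify the right ambient Polish space — presumably a space of pairs (function, decreasing witness data for the omitted dimensions) — in which $\overline{G}$ sits as a closed subset, so that Polishness is inherited; the template for this is precisely the construction of $\mathcal{I}_4$ on $\mathrm{Inj}(\Omega)$ cited in the proof of Lemma~\ref{lem:atleasttwo}, and the main work is to carry out the bookkeeping with $\dim$ in place of set membership. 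A secondary, more routine obstacle is confirming that all these topologies refine $\pw$ and are pairwise comparable or incomparable in a way that guarantees they are genuinely distinct — but once step (3) produces, for each $n$, a convergence that fails at level $n+1$, distinctness is immediate.
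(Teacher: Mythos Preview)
Your guiding intuition — use the codimension $\dim(\Omega\setminus\mathrm{Im}(f))$ to manufacture a countable family of refinements of $\pw$ — is exactly the paper's idea, but the execution you sketch is both vaguer and more laborious than necessary, and in one place not quite right.

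The paper does not introduce sets like your $\mathcal{V}^{(n)}_B$, nor does it build an ambient Polish space for each $n$. Instead it works with the \emph{exact} level sets
\[
F_n:=\{f\in\overline{G}\mid \dim(\Omega\setminus\mathrm{Im}(f))=n\},
\]
starts from $\mathcal{S}_0:=\tgcl$ (the $\mathcal{I}_4$-style topology from Lemma~\ref{lem:atleasttwo}), and sets $\mathcal{S}_n$ to be the topology generated by $\mathcal{S}_{n-1}$ together with the single set $F_{n-1}$. Polishness, which you flag as the main obstacle, is then a one-line induction using~\cite[Lemma~13.2]{Kechris}: the set $\{f\mid\dim(\Omega\setminus\mathrm{Im}(f))>n-1\}$ is already open in $\mathcal{S}_0$ (it is a countable union of finite intersections of the $\mathcal{W}_a$), and $\bigcup_{i<n-1}F_i$ is open in $\mathcal{S}_{n-1}$ by construction, so $F_{n-1}$ is \emph{closed} in $\mathcal{S}_{n-1}$ and can be declared clopen while remaining Polish. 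No ambient-space bookkeeping is needed.

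For the semigroup topology step you write that ``$\dim$ of the image is submultiplicative under composition''. What is actually used (and what the pregeometry gives you) is the \emph{exact additivity}
\[
\dim(\Omega\setminus\mathrm{Im}(fg))=\dim(\Omega\setminus\mathrm{Im}(f))+\dim(\Omega\setminus\mathrm{Im}(g)),
\]
so that $F_m\cdot F_k\subseteq F_{m+k}$. Mere subadditivity would not obviously make the declared-open set $F_l$ stable under small perturbations of a composition: you need to know that if $fg\in F_l$ then $f\in F_m$, $g\in F_k$ with $m+k=l$, and $F_m\times F_k$ (both open in $\mathcal{S}_n$ since $m,k\leq l\leq n-1$) is a neighbourhood of $(f,g)$ mapping into $F_l$. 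Finally, distinctness is shown by a back-and-forth using Neumann's Lemma: given $f\in F_n$, any basic $\mathcal{S}_l$-neighbourhood of $f$ (for $l\leq n$) is of the form $\mathcal{U}_{(\overline{a},\overline{b})}\cap\mathcal{W}_{\{c_1,\ldots,c_k\}}$, and one constructs $g$ in that neighbourhood with $\dim(\Omega\setminus\mathrm{Im}(g))>n$, so $F_n$ is not $\mathcal{S}_l$-open.
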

\begin{proof} This is essentially the proof of~\cite[Theorem 5.22 (i) \& (vi)]{EJMMMP-zariski}. 
    Note that for any $f\in\overline{G}$, the image $\mathrm{Im}(f)$ is algebraically closed. 
 Let $\tau_0:=\tgcl$ from the proof of  Lemma~\ref{lem:atleasttwo}.
For each $n\in\mathbb{N}$ let
    \[Q_n:=\{f\in\overline{G} \ \;\vert\; \dim(\Omega\pau{/} \mathrm{Im}(f))=n\}\;.\]
    For all $n\in\mathbb{N}$, we define  $\tau_n$ to be the topology generated by $\tau_{n-1}$ and $Q_{n-1}$.\\
    
\textbf{Claim 1:} For each $n\in\mathbb{N}$, $\tau_n$ is a semigroup topology.
\begin{proof}[Proof of Claim 1] We need to prove that composition is continuous. 
Note that for all $f,g\in\overline{G}$,
\[\dim(\Omega\pau{/}\mathrm{Im}(fg))=\dim(\Omega\pau{/}\mathrm{Im}(f))+\dim(\Omega\pau{/}\mathrm{Im}(g))\;.\]
In particular, for any $l\leq n$, choosing $m,k\in\mathbb{N}$ such that $m+k=l$, we get that $Q_m\cdot Q_k\subseteq Q_l$. 
This yields continuity as desired. 
\end{proof}
\textbf{Claim 2:} For all $l<n$ the topologies $\tau_l$ and $\tau_n$ are distinct.
\begin{proof}[Proof of Claim 2] \mic{We use Neumann's Lemma, \cite[Corollary 4.2.2]{HodgesLong}: given finite $A, B\subseteq\Omega$ such that $A\cap\acl(\emptyset)=\emptyset$, there is $g\in G$ such that $g(A)\cap B=\emptyset$.}
\mic{Combining this with the fact that $\acl$ is locally finite and forms a pregeometry,
it is easy to see by a back \pau{\&} forth argument that the sets $Q_l$ are non-empty  for all $l\in\mathbb{N}$.} Let $f\in Q_n$ and let $\pau{V}$ be an open neighbourhood of $f$ in $\tau_n$; it suffices to show that $\pau{V}\neq Q_n$.  Since $f$ does not belong to any $Q_m$ for any $m<n$, we must have that $\pau{V}\in \tau_0$. Hence, there exist a basic open neighbourhood ${{U}_{(\overline{a}, \overline{b})}\in\pw}$, \pau{where}
\[\pau{U_{(\overline{a}, \overline{b})}:=\{f\in\overline{G}\vert f(\overline{a})=\overline{b}\}\;,}\]
and some set 
\[{W}_{\{c_1,\ldots,c_k\}}=\{f\in\overline{G} \ \vert \ \forall i\; c_i\not\in\mathrm{Im}(f)\}\]
such that $f\in{{U}_{(\overline{a}, \overline{b})}}\cap {W}_{\{c_1,\ldots,c_k\}}\subseteq \pau{V}$. By a back \& forth argument, using Neumann's Lemma and the fact that algebraic closure \mic{is locally finite and} forms a pregeometry, it is easy to construct some $g\in\overline{G}$ such that $g\in \pau{V}$ but $g\not\in Q_n$ by building $g$ so that it agrees with $f$ on $\overline{a}$ and so that 
\pau{$\dim(\Omega/\mathrm{Im}(g))>n$ and $\Omega\setminus\mathrm{Im}(g)\supseteq \{c_1,\ldots,c_k\}$.}
This implies that $Q_n\neq \pau{V}$, as we wanted to show.
\end{proof}
\textbf{Claim 3:} For each $n\in\mathbb{N}$, the  topology $\tau_n$ is Polish. 
\begin{proof}[Proof of Claim 3]
\mic{From~\cite[Lemma 13.2]{Kechris}, we know that if $(X, \tau)$ is a Polish space and $F\subseteq X$ is $\tau$-closed, then the topology generated by $\tau\cup\{F\}$ is also Polish. Hence, 
it is sufficient to prove that $Q_n$ is closed in $\tau_n$  for each $n$.} Note that
   \[\overline{G}\setminus Q_n:=\{f\in\overline{G}\vert \dim(\Omega\pau{/}\mathrm{Im}(f))>n\} 
   \cup\{f\in\overline{G}\vert \dim(\Omega\pau{/}\mathrm{Im}(f))<n\}\;.\]
The first of the two sets in this union is open in $\tau_0$ and the second is just $\bigcup_{i<n} Q_i$. Hence this set is open in $\tau_n$, as desired. \end{proof}
\end{proof}
It is natural to wonder whether the technique of Lemma~\ref{lem:countablymany} can be extended to all monoids in $\ceemb$. We point out that there are $\omega$-categorical structures for which every non-surjective elementary embedding has co-infinite image, meaning that one cannot define different topologies by open sets of the form of the sets $\pau{Q_n}$. In particular, using a classical construction {of Hrushovski~\cite{omps},}
one can build an $\omega$-categorical graph such that every vertex has infinite valency and such that for any two vertices {and any path of length two connecting them, the vertex in the middle of the path belongs to their algebraic closure} 
(cf.~\cite[Theorem 5.1]{marimon2025invariant}). From this, 
 it is easy to prove that any of its non-surjective elementary embedding has co-infinite image. {Thus, we ask the following:}
 
\begin{question} Does every $\overline{G}\in\ceemb$ have infinitely many  Polish semigroup topologies?
\end{question}

\subsection{Minimality of \texorpdfstring{$\pw$}{the topology of pointwise convergence} and the Zariski topology}

\begin{definition} Let ${S}$ be a monoid. The \textbf{semigroup Zariski} topology  $\tau_Z$, introduced in~\cite{EJMMMP-zariski},  is the topology on ${S}$ for which a subbasis is given by all  sets 
\[M_{\phi, \psi}:=\{s\in{S} \ \vert \phi(s)\neq\psi(s)\}\;,\]
where $\phi, \psi: {S}\to{S}$ are functions of the form 
\[s\stackrel{\phi}{\mapsto} t_k s t_{k-1} s\dots t_1 s t_0 \text{ and } s\stackrel{\psi}{\mapsto} q_l s q_{l-1} s\dots q_1 s q_0\;,\]
for fixed $k,l\geq 1$ and $t_0, \dots, t_k, q_0, \dots q_l\in{S}$.
\end{definition}

{When ${S}$ is 
a group, 
a potentially finer topology can be defined by allowing inverses of $s$ to appear in the definitions of $\phi$ and $\psi$; this topology is the 
\textbf{group Zariski topology}, and we shall not consider it here.}

Contrary to the pointwise convergence topology $\pw$ on a transformation monoid $\cS$, the semigroup Zariski topology $\zar$ is defined in purely algebraic terms rather than {from} the action of ${S}$. In particular, any isomorphism between monoids is a homeomorphism with respect to this topology. In~\cite{EJMMMP-zariski}, it was pointed out that although $\tau_Z$ need not be Hausdorff or a semigroup topology, it is contained in any topology enjoying these properties; in particular,  $\zar\subseteq \pw$. In fact, in~\cite{EJMMMP-zariski,EJMPP} it was shown that $\zar= \pw$ for the endomorphism monoids of several $\omega$-categorical structures, and a general condition for this to happen was provided  in~\cite{PINSKER_SCHINDLER_2023}. An $\omega$-categorical structure  $\sA$ has a \textbf{mobile core} if any of its elements is contained in the image of an endomorphism into a  substructure which is a model-complete core. Trivially, this is satisfied if $\sA$ is itself a model-complete core.

\begin{theorem}[\cite{PINSKER_SCHINDLER_2023}]\label{thm:zariskipinskschind}
  Let $\sA$ be an $\omega$-categorical structure without algebraicity which has a mobile core. Then $\zar=\pw$ on $\End(\sA)$ if one of the following
two conditions holds:
\begin{itemize}
    \item  the model-complete core of $\sA$ is finite;
    \item the model-complete core of $\sA$ is infinite and does not have algebraicity.
\end{itemize}
\end{theorem}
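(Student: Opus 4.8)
The plan is to show that $\pw\subseteq\zar$ on $\End(\sA)$; combined with the general inclusion $\zar\subseteq\pw$ recalled above (from \cite{EJMMMP-zariski}), this gives equality. Since $\pw$ is generated by the subbasic sets $\cU_{(a,b)}=\{f\in\End(\sA)\mid f(a)=b\}$ together with their complements, and the complements $\{f\mid f(a)\neq b\}$ are of the form $M_{\phi,\psi}$ with $\phi$ constant equal to $b$ and $\psi(s)=s$ composed with the constant $a$ (this uses that constants and the map $a$ itself sit inside the right algebraic gadgets once we allow the $t_i,q_j$ to be arbitrary elements of $\End(\sA)$), the real work is to express each $\cU_{(a,b)}$ itself as a $\zar$-open set. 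So the heart of the argument is: for each $a,b\in\Omega$, produce elements and word-maps $\phi,\psi$ witnessing that $\{f\mid f(a)=b\}$ is $\zar$-open.

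First I would exploit the mobile core hypothesis: every element of $\Omega$ lies in the image of some endomorphism $e$ of $\sA$ whose image is a model-complete core $\sC$. The point of passing to the core is that on a model-complete core the endomorphism monoid has a dense set of invertibles, so there are "many" self-embeddings to play with, and the orbit structure on $\sC$ is controlled by $\Aut(\sC)$; the absence of algebraicity means these orbits are "rigid" in the sense that a self-embedding fixing enough points is heavily constrained (e.g. the center of the relevant group is trivial, as noted in the preliminaries). Concretely, to detect $f(a)=b$ I would look for endomorphisms $t$ with $t(b)$ equal to some chosen reference point (sitting in the core), together with auxiliary endomorphisms realizing the negation "$f(a)\neq b$" as "$\psi(f)\neq\phi(f)$" for suitable words. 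The no-algebraicity assumption is what lets one separate $b$ from the rest of its orbit by composing with elements of the core's embedding monoid: one can find $t,q\in\End(\sA)$ such that $t\circ f=q\circ f$ forces $f(a)$ into a prescribed small set, and iterating over finitely many such pairs pins $f(a)$ down to $\{b\}$.

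The two cases in the statement then correspond to how much room there is to carry this out. If the model-complete core $\sC$ is finite, then $\End(\sA)$ acts with finitely many "core images" and one can brute-force the separation using the finitely many endomorphisms onto $\sC$; the finiteness collapses the word-length bookkeeping. If $\sC$ is infinite but also has no algebraicity, one instead uses that $\Aut(\sC)$ is oligomorphic with trivial algebraic closure, so a single point can be isolated inside its orbit by a bounded amount of data, and the self-embeddings of $\sA$ into $\sC$ supply, for any $a,b$, a word-map pair detecting $f(a)=b$. In both cases one has to be slightly careful that the elements $t_i,q_j$ chosen actually lie in $\End(\sA)$ (not merely in $\Emb$ or $\Aut$), but since $\End(\sA)$ contains the self-embeddings and the mobile-core endomorphisms, this is available.

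The main obstacle I expect is the infinite-core case: unlike the finite case, one cannot just enumerate the finitely many relevant endomorphisms, and one must genuinely use the interplay between "mobile core" (every point is reachable) and "no algebraicity of the core" (every point is isolable within its orbit by finite data) to build, uniformly in $(a,b)$, a finite Boolean combination of sets $M_{\phi,\psi}$ equal to $\cU_{(a,b)}$. Getting the word-maps $\phi,\psi$ right — in particular arranging that the negation of $f(a)=b$ is detected rather than the statement itself, since $\zar$ is generated by the "$\neq$" sets — is the delicate combinatorial step; I would handle it by first showing $\{f\mid f(a)\neq b\}$ is $\zar$-open (which is the natural direction for $\zar$) for the finitely many $b$ in each $\pw$-clopen piece, and then taking complements inside a $\pw$-clopen (hence $\zar$-clopen, once we know $\zar$ contains enough) set to recover $\cU_{(a,b)}$.
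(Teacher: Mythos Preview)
The paper does not prove this theorem at all: it is stated as a citation from \cite{PINSKER_SCHINDLER_2023} and used as a black box (notably in Corollary~\ref{cor:ahfromzariski}). There is therefore no proof in the paper to compare your proposal against.

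That said, your plan has genuine gaps worth flagging. First, your treatment of the ``easy'' direction is not right: you claim $\{f\mid f(a)\neq b\}$ is $M_{\phi,\psi}$ with ``$\phi$ constant equal to $b$'' and ``$\psi(s)=s$ composed with the constant $a$'', but $\phi,\psi$ take values in $\End(\sA)$, not in $\Omega$. What you presumably want is $\phi(s)=c_b$ and $\psi(s)=s\circ c_a$ for the constant endomorphisms $c_a,c_b$; this requires that constant functions lie in $\End(\sA)$, which is not assumed and need not hold (e.g.\ it fails for model-complete cores without algebraicity, precisely one of the cases you must handle). Second, your endgame is circular: you propose to recover $\cU_{(a,b)}$ by ``taking complements inside a $\pw$-clopen (hence $\zar$-clopen, once we know $\zar$ contains enough) set'', but knowing that $\zar$ contains enough $\pw$-clopen sets is exactly what you are trying to prove. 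Third, the substantive construction in the infinite-core case---producing, for each pair $(a,b)$, explicit $t_i,q_j\in\End(\sA)$ so that a finite Boolean combination of word-equations detects $f(a)=b$---is asserted rather than carried out; phrases like ``a single point can be isolated inside its orbit by a bounded amount of data'' do not yet translate into semigroup words in $f$. This is where the actual content of the Pinsker--Schindler argument lives, and your sketch does not yet supply it.
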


In~\cite{PINSKER_SCHINDLER_2023}, also the first example of an $\omega$-categorical structure $\sA$ for which $\zar\neq\pw$ on $\End(\sA)$ was provided, answering a question from~\cite{EJMPP}. We remark that another albeit less natural  example could actually be obtained from Theorem~\ref{thm:nonreconstruction}
\cite{BodirskyEvansKompatscherPinsker}: clearly, if $\End(\sA)$ and $\End(\sB)$ are isomorphic but not topologically isomorphic, $\pw\neq\zar$ must hold for one of the two monoids. This example also answers positively~\cite[Question 4.1]{PINSKER_SCHINDLER_2023} asking about the existence of an $\omega$-categorical structure with a Hausdorff semigroup topology on its endomorphism monoid which is not finer than $\pw$. Indeed, using a non-topological isomorphism between $\End(\sA)$ and $\End(\sB)$ to send $\pw$ from one monoid onto the other, one sees that one of the two structures must be a positive example (although we do not know which one). However, the following strengthening of the conclusion of the question remains open: 

\begin{question}
{Is there $\cS\in\cend$ such that there is a Hausdorff semigroup topology on $\cS$ which is strictly coarser than $\pw$?}
\end{question}

\mic{Recently, the original example from~\cite{PINSKER_SCHINDLER_2023} has been vastly generalised:} 

\begin{theorem}[\cite{new}]\label{thm:newres}
{Let $G\in\cgroup$ have locally finite algebraic closure and non-trivial center. Then $\zar$ is not Hausdorff on $\overline{G}$, and hence in particular $\zar\neq \pw$.}
\end{theorem}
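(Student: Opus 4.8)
The plan is to produce two distinct elements of $\overline{G}$ that admit no disjoint $\zar$-open neighbourhoods; since $\pw$ is Hausdorff and $\zar\subseteq\pw$, this immediately yields $\zar\neq\pw$ as well. The natural candidate pair is $\mathrm{id}$ and a nontrivial central element $z\in Z(G)$. First I would record the two features of the hypotheses that make $z$ usable. A short density argument (choosing, for $f\in\overline{G}$ and finite $C$, some $g\in G$ with $g_{\upharpoonright C\cup z(C)}=f_{\upharpoonright C\cup z(C)}$, whence $(zf)_{\upharpoonright C}=(zg)_{\upharpoonright C}=(gz)_{\upharpoonright C}=(fz)_{\upharpoonright C}$) shows that $z$ is central not only in $G$ but in the whole monoid $\overline{G}$. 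And local finiteness of $\acl$, together with $z(a)\in\acl(a)$ for all $a$ (the $G_a$-orbit of $z(a)$ is a singleton, as $z$ commutes with $G_a$), means that $z$ restricts to an automorphism of $\mathrm{Im}(f)$ for every $f\in\overline{G}$ and that its action near any finite set is confined to a finite algebraically closed set. I would also reduce to the case $G=\Aut(\sA)$ with $\sA$ homogeneous (passing to the canonical structure), so that $\overline{G}=\Emb(\sA)$ and back-and-forth arguments become available.

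Next I would unwind what it means to separate $\mathrm{id}$ from $z$: a basic $\zar$-neighbourhood of $\mathrm{id}$ has the form $U=\bigcap_{i\le m}M_{\phi_i,\psi_i}$ with $\phi_i(\mathrm{id})\neq\psi_i(\mathrm{id})$, a basic neighbourhood of $z$ has the form $V=\bigcap_{j\le n}M_{\phi'_j,\psi'_j}$ with $\phi'_j(z)\neq\psi'_j(z)$, and it suffices to find, for all such $U,V$, an element of $U\cap V$. Since the word maps $s\mapsto t_ks t_{k-1}\cdots s t_0$ are $\pw$-continuous and distinct elements of $\overline{G}$ disagree at some point, there is a finite $E\subseteq\Omega$ such that every $h\in\overline{G}$ with $h_{\upharpoonright E}=\mathrm{id}_{\upharpoonright E}$ automatically satisfies $\phi_i(h)\neq\psi_i(h)$ for all $i$, and also $\phi'_j(h)\neq\psi'_j(h)$ for every $j$ with $\phi'_j(\mathrm{id})\neq\psi'_j(\mathrm{id})$. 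So the whole difficulty concentrates on the indices $j$ with $\phi'_j(\mathrm{id})=\psi'_j(\mathrm{id})$ but $\phi'_j(z)\neq\psi'_j(z)$: expanding the word maps and pulling out the central $z$'s, these are exactly the pairs where $\phi'_j$ and $\psi'_j$ use the variable a different number of times, $a_j\neq b_j$, and the (again central, and nontrivial) element $z^{a_j-b_j}$ fails to fix $\mathrm{Im}(\phi'_j(\mathrm{id}))$ pointwise.

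The heart of the proof is then to build a single $h\in\overline{G}$ with $h_{\upharpoonright E}=\mathrm{id}_{\upharpoonright E}$ that still satisfies $\phi'_j(h)\neq\psi'_j(h)$ for these remaining $j$; the idea is to let $h$ imitate $z$ far away from $E$. For each such $j$ the inequality $\phi'_j(z)\neq\psi'_j(z)$ is witnessed at a single point, and following its trajectory through the word one sees that it depends only on the restriction of $z$ to a finite, algebraically closed set; moreover the disagreement set of $\phi'_j(z)$ and $\psi'_j(z)$ equals $\phi'_j(\mathrm{id})^{-1}(\mathrm{supp}(z^{a_j-b_j}))$, and $\mathrm{supp}(z^{a_j-b_j})$ is a union of $G$-orbits. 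Using this and homogeneity, one transports the finitely many witnessing configurations into a finite algebraically closed set $S$ disjoint from $E$ and finds $h\in\overline{G}$ with $h_{\upharpoonright E}=\mathrm{id}_{\upharpoonright E}$ and $h_{\upharpoonright S}=z_{\upharpoonright S}$, which then lies in $U\cap V$. I expect the main obstacle to be precisely this interpolation step: one must handle the case where the support of some $z^{a_j-b_j}$ is a \emph{finite} $G$-orbit that cannot be moved off $E$, so that $h$ cannot literally agree with $z$ on the relevant points. Here one has to use $z(a)\in\acl(a)$ and local finiteness of $\acl$ more carefully — exploiting that two distinct elements of $\overline{G}$ already disagree on the algebraic closure of a single, well-chosen point — so that $h$ can be repaired on $E$ without destroying the strict inequalities that place it in $V$.
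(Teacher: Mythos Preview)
The paper does not contain a proof of this theorem: it is stated as an announced result from~\cite{new}, so there is no proof in the paper to compare against.

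On the substance of your proposal: the overall strategy of showing that $\mathrm{id}$ and a non-trivial $z\in Z(G)$ cannot be $\zar$-separated is natural, and your reduction to the ``hard'' indices $j$ (those with $\phi'_j(\mathrm{id})=\psi'_j(\mathrm{id})$ but $\phi'_j(z)\neq\psi'_j(z)$) is correct, as are the observations that $z$ is central in all of $\overline{G}$ and that $z(a)\in\acl(a)$. The genuine gap lies in the interpolation step, and it is more serious than the obstacle you isolate. For $h\in\overline{G}$ with $h_{\upharpoonright E}=\mathrm{id}_{\upharpoonright E}$ and $h_{\upharpoonright S}=z_{\upharpoonright S}$ to exist, the partial map $\mathrm{id}_E\cup z_{\upharpoonright S}$ must be elementary; equivalently, there must be some $g\in G$ fixing $E$ pointwise with $g_{\upharpoonright S}=z_{\upharpoonright S}$, i.e.\ $z\in G_E\cdot G_S$. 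Centrality of $z$ only gives $\mathrm{tp}(z(\bar s)/z(\bar e))=\mathrm{tp}(\bar s/\bar e)$ for tuples $\bar e$ enumerating $E$ and $\bar s$ from $S$, and since $z$ may permute $E$ non-trivially this does \emph{not} yield $\mathrm{tp}(z(\bar s)/\bar e)=\mathrm{tp}(\bar s/\bar e)$. You have no control over whether $z$ fixes $E$ pointwise, because $E$ is dictated by the neighbourhood $U$ of $\mathrm{id}$ and enlarging $E$ only makes matters worse. Your diagnosis --- finite $G$-orbits in $\mathrm{supp}(z^{a_j-b_j})$ that cannot be moved off $E$ --- is a side issue: the difficulty is already present when all relevant orbits are infinite and the witnessing trajectories can be placed wherever you like. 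The repair you sketch (``exploiting that two distinct elements of $\overline{G}$ already disagree on the algebraic closure of a single, well-chosen point'') does not address this, since the problem is not about detecting disagreement but about realising the partial map $\mathrm{id}_E\cup z_{\upharpoonright S}$ inside $\overline{G}$ at all.
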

\mic{Nevertheless,~\cite{new}  show by different techniques that $\pw$ has no strictly coarser Hausdorff semigroup topology in several monoids in $\ceemb$ whose underlying group has non-trivial centre, such as the monoids of elementary embeddings of countable vector spaces over finite fields or of infinitely many disjoint copies of $\mathbb{K}_2$. Indeed, they prove minimality of $
\pw$ on $\EEmb(\sA)$ for saturated $\sA$ under relatively simple  assumptions on the closure operator $\acl$.}

 \mic{Let us conclude with a remark on minimality of $\pw$ for clones. A function clone  is \textbf{transitive} if the permutation group of its invertible
unary functions acts transitively on its domain.} \mic{For clones satisfying the following weakening of transitivity, minimality of $\pw$ can be lifted from the monoid of its unary functions.} 
Call a semigroup {$\cS$ acting on $\Omega$}
\textbf{weakly directed} if for all $a,b\in\Omega$ there are $f,g\in\cS$ and $c\in\Omega$ such that $f(c)=a$ and $g(c)=b$. 
 A clone is weakly directed if the monoid of its unary functions is.
\begin{lemma}[{\cite[Lemma 6.3]{EJMPP}}]\label{lem:liftmini}
Let $\cC\in\cclone$ be weakly directed. Suppose that $\pw$ is minimal amongst Hausdorff semigroup topologies on the monoid of unary operations of $\cC$. Then, $\pw$ is minimal amongst Hausdorff clone topologies on $\cC$.
\end{lemma}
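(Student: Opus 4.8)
\textbf{Proof plan for Lemma~\ref{lem:liftmini}.}

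The plan is to take a Hausdorff clone topology $\sigma$ on $\cC$ with $\sigma\subseteq\pw$, and to deduce $\sigma=\pw$. The first step is to pass to the unary part: let $\cC^{(1)}$ denote the monoid of unary operations of $\cC$, which is clopen in $\cC$ with respect to $\pw$ (by the definition of a topological clone). Restricting $\sigma$ to $\cC^{(1)}$ gives a topology $\sigma^{(1)}$ which is Hausdorff and contained in $\pw_{\upharpoonright\cC^{(1)}}$; moreover $\sigma^{(1)}$ is a semigroup topology on $\cC^{(1)}$, since the semigroup operation on $\cC^{(1)}$ is a restriction of clone composition and $\sigma$ is a clone topology. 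By the hypothesis that $\pw$ is minimal amongst Hausdorff semigroup topologies on $\cC^{(1)}$, we conclude $\sigma^{(1)}=\pw_{\upharpoonright\cC^{(1)}}$. So on the unary part we already have what we want; the work is to propagate this to every arity.

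The second step is to use weak directedness to express the subbasic $\pw$-open sets $\cU_{(\overline a,b)}$ for $\overline a\in\Omega^n$, $b\in\Omega$, in terms of unary data and clone composition. Fix $\overline a=(a_1,\dots,a_n)$ and $b$. Since $\cC$ is weakly directed, iterating the definition we may find a single $c\in\Omega$ and unary operations $u_1,\dots,u_n\in\cC^{(1)}$ with $u_i(c)=a_i$ for all $i$ (apply weak directedness repeatedly, or note that a transitive-group-type argument gives a common source; in general one pushes weak directedness up by induction on $n$, using it to merge sources two at a time). Given $f\in\cC\cap\mathcal O^{(n)}$, form the unary operation $f(u_1,\dots,u_n)\in\cC^{(1)}$; then $f(u_1,\dots,u_n)(c)=f(a_1,\dots,a_n)$. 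Hence, writing $\mathrm{comp}:\mathcal O^{(n)}\times(\cC^{(1)})^n\to\cC^{(1)}$ for the (continuous, in $\sigma$) composition map with the $u_i$ held fixed, we get
\[
f\in\cU_{(\overline a,b)}\cap\mathcal O^{(n)}\iff f(u_1,\dots,u_n)\in\cU_{(c,b)}\cap\cC^{(1)}.
\]
The right-hand side is the preimage under a $\sigma$-continuous map of a set which is $\sigma^{(1)}$-open (since $\sigma^{(1)}=\pw_{\upharpoonright\cC^{(1)}}$ and $\cU_{(c,b)}\cap\cC^{(1)}$ is basic $\pw$-open on $\cC^{(1)}$). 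Therefore $\cU_{(\overline a,b)}\cap\mathcal O^{(n)}$ is $\sigma$-open in $\cC\cap\mathcal O^{(n)}$. Since also each $\cC\cap\mathcal O^{(n)}$ is $\sigma$-clopen (a clone topology makes the arity-$n$ part clopen — or one derives this from the fact that $\pi^1_1\circ f = f$ detects arity together with Hausdorffness; alternatively it is part of the definition of a topological clone and is automatically inherited by any Hausdorff clone topology refining... one should be slightly careful here and simply invoke that $\sigma$ being a clone topology means the arity-$n$ parts are clopen), these subbasic $\pw$-open sets are $\sigma$-open, so $\pw\subseteq\sigma$, whence $\sigma=\pw$.

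The main obstacle I expect is the second step: getting from the pairwise weak-directedness hypothesis (one pair $a,b$ at a time, via a single intermediate source $c$) to a simultaneous source $c$ for an entire $n$-tuple $\overline a$ together with unary operations realizing each coordinate. This requires an induction: given a source $c'$ and operations realizing $(a_1,\dots,a_{n-1})$ from $c'$, and a source $c''$ with an operation realizing $a_n$, apply weak directedness to $c'$ and $c''$ (thought of as the two target points) to obtain a new common source $c$ together with unary $v',v''$ hitting $c'$ and $c''$ respectively, then precompose; one must check that this keeps everything inside $\cC$, which it does since $\cC$ is a clone and is closed under such compositions. A secondary point to get right is the precise statement of which parts of a refining Hausdorff clone topology are automatically clopen — but this is exactly the bookkeeping already built into the definition of "topological clone" given in the preliminaries, so it should not require new ideas.
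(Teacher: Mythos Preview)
Your argument is correct and is the natural proof (the present survey does not reproduce a proof, merely citing the reference, and your argument is presumably what that reference does). There is one small framing issue worth fixing: you begin by taking a Hausdorff clone topology $\sigma$ with $\sigma\subseteq\pw$ and aim for $\sigma=\pw$. In the context of this paper, ``$\pw$ is minimal'' means that $\pw$ is the \emph{coarsest} Hausdorff clone topology, i.e.\ $\pw\subseteq\sigma$ for \emph{every} Hausdorff clone topology $\sigma$ (this is how the Zariski argument is used: $\zar$ sits below every Hausdorff semigroup topology, so $\zar=\pw$ forces $\pw$ below all of them). Your core argument already proves this stronger statement if you simply drop the assumption $\sigma\subseteq\pw$: given any Hausdorff clone topology $\sigma$, the restriction $\sigma^{(1)}$ is a Hausdorff semigroup topology on $\cC^{(1)}$, so the hypothesis yields $\pw_{\upharpoonright\cC^{(1)}}\subseteq\sigma^{(1)}$ (not equality, but you only use this inclusion later), and then your composition trick via weak directedness gives $\pw\subseteq\sigma$ on all of $\cC$. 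Nothing else changes; in particular, the induction producing a common source $c$ and unary operations $u_1,\dots,u_n\in\cC^{(1)}$ for an arbitrary $n$-tuple is correct as you sketch it, and the clopenness of the arity components is indeed part of the definition of a clone topology, so no extra work is needed there.
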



\subsection{Maximality of \texorpdfstring{$\pw$}{the topology of pointwise convergence}, Property X, and automatic continuity} 

\begin{definition}\label{def:propertyX} Let $\cS$ be a topological semigroup and $N$ a subset of $\cS$. We say that $\cS$ has \textbf{Property~$\bf \mathrm{X}$} with respect to $N$ if the following holds:
for every $s\in\cS$ there are $f_s, g_s\in\cS$ and $t_s\in N$ such that $s=f_st_sg_s$ and for every neighbourhood $\cU$ of $t_s$, the set $f_s(\cU\cap N)g_s$ is a neighbourhood of $s$.
\end{definition}

\begin{theorem}[{\cite[Theorem 3.1]{EJMMMP-zariski}}]\label{thm:propertyXmain} Let $\cS$ 
be a topological semigroup with topology $\tau$ and $N\subseteq \cS$. If $\cS$ has Property~$\mathrm{X}$ with respect to $N$, then:
\begin{enumerate}
    \item if $\tau$ is Polish and $N$ is a Polish subgroup of $\cS$ then $\tau$ is maximal among the Polish semigroup topologies for $\cS$;
    \item if $N$ is a subsemigroup of $\cS$ with automatic continuity with respect to a class $\classC$ of topological semigroups, then $\cS$ has automatic continuity with respect to $\classC$.
\end{enumerate}
    
\end{theorem}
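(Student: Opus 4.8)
The plan is to dispatch part~(2) first, since it is a direct application of Property~$\mathrm{X}$, and then to reduce part~(1) to an argument of the same shape plus one descriptive-set-theoretic input. Throughout, I use freely that in a topological semigroup the ternary map $(a,b,c)\mapsto abc$ is jointly continuous, being a composition of the (jointly continuous) binary operation with itself.

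For part~(2), let $\cT\in\cC$ and let $\psi\colon\cS\to\cT$ be a semigroup homomorphism. Since $G$ is a subsemigroup of $\cS$, the restriction $\psi_{\upharpoonright G}\colon G\to\cT$ is a semigroup homomorphism into a member of $\cC$, hence continuous by the automatic continuity of $G$ with respect to $\cC$. I would then check continuity of $\psi$ at an arbitrary $s\in\cS$: writing $s=f_st_sg_s$ as granted by Property~$\mathrm{X}$ and fixing an open $W\ni\psi(s)$ in $\cT$, joint continuity of multiplication in $\cT$ yields open sets $W_1\ni\psi(f_s)$, $W_2\ni\psi(t_s)$, $W_3\ni\psi(g_s)$ with $W_1W_2W_3\subseteq W$; continuity of $\psi_{\upharpoonright G}$ at $t_s$ yields an open $\cU\ni t_s$ in $\cS$ with $\psi(\cU\cap G)\subseteq W_2$; and Property~$\mathrm{X}$ makes $f_s(\cU\cap G)g_s$ a neighbourhood of $s$, whose $\psi$-image lies in $\psi(f_s)\,\psi(\cU\cap G)\,\psi(g_s)\subseteq W_1W_2W_3\subseteq W$. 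Hence $\psi$ is continuous at $s$, and so $\psi$ is continuous.

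For part~(1), suppose $\sigma$ is a Polish semigroup topology on $\cS$ refining $\tau$; I must show $\sigma=\tau$. Write $\tau_G$, $\sigma_G$ for the subspace topologies these induce on $G$. The crux is that $\sigma_G=\tau_G$. Since $(G,\tau_G)$ is a Polish subspace of the Polish space $(\cS,\tau)$, $G$ is $G_\delta$ in $(\cS,\tau)$, hence also $G_\delta$ in the finer Polish space $(\cS,\sigma)$, so $(G,\sigma_G)$ is Polish. As $\sigma$ is a semigroup topology, multiplication on $G$ is jointly $\sigma_G$-continuous; invoking the classical fact that a Polish space carrying a group operation with jointly continuous multiplication is automatically a topological group, $(G,\sigma_G)$ is a Polish group. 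Now $\mathrm{id}\colon(G,\sigma_G)\to(G,\tau_G)$ is a continuous bijective group homomorphism between Polish groups, hence open by the Open Mapping Theorem~\cite[Theorem~1.2.6]{BeckerKechris}; that is, $\sigma_G=\tau_G$.

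It remains to promote $\sigma_G=\tau_G$ to $\sigma=\tau$, again via Property~$\mathrm{X}$. Let $U\in\sigma$ and $s\in U$, and write $s=f_st_sg_s$ as in Property~$\mathrm{X}$. Joint $\sigma$-continuity of $(a,b,c)\mapsto abc$ yields $\sigma$-open sets $V_1\ni f_s$, $W\ni t_s$, $V_2\ni g_s$ with $V_1WV_2\subseteq U$, so $f_s(W\cap G)g_s\subseteq U$. But $W\cap G$ is $\sigma_G$-open, hence $\tau_G$-open, so $W\cap G=\cU\cap G$ for some $\tau$-open $\cU$ containing $t_s$; by Property~$\mathrm{X}$, $f_s(\cU\cap G)g_s$ is a $\tau$-neighbourhood of $s$, and it equals $f_s(W\cap G)g_s\subseteq U$. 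Thus every point of $U$ is $\tau$-interior to $U$, whence $U\in\tau$ and $\sigma=\tau$, proving maximality. The main obstacle is precisely the middle step of part~(1): ensuring that the \emph{finer} topology $\sigma$ still makes $G$ a Polish group, which rests both on the $G_\delta$ bookkeeping and on the standard (but non-trivial) passage from jointly continuous multiplication on a Polish space to continuity of inversion. The Property~$\mathrm{X}$ manipulations in both parts are routine once one is careful about which topology each neighbourhood is taken in.
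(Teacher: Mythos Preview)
This survey does not give its own proof of the theorem; it is quoted from \cite[Theorem~3.1]{EJMMMP-zariski} without argument, so there is no in-paper proof to compare against.

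Your proposal is correct. Part~(2) is the intended routine application of Property~$\mathrm{X}$. In part~(1), your reduction is sound: the $G_\delta$ bookkeeping showing $(G,\sigma_G)$ is Polish, the Open Mapping step yielding $\sigma_G=\tau_G$, and the final lift to $\sigma=\tau$ via Property~$\mathrm{X}$ are all clean and carefully tracked as to which topology each neighbourhood lives in. The one genuinely non-elementary input you invoke---that a group carrying a Polish topology with jointly continuous multiplication automatically has continuous inversion---is indeed a theorem (every completely metrizable, or more generally \v{C}ech-complete, paratopological group is a topological group), and you are right to single it out as the crux. This is also the step at which the argument in the original source hinges, so your route is essentially the standard one.
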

{The following notion was introduced in~\cite{CameronNesetril}:}

\begin{definition}
    A countably infinite relational structure $\sA$ is \textbf{homomorphism-homogeneous} (H-homogeneous) if every homomorphism between 
    finite substructures of $\sA$ extends to an endomorphism of the whole structure.
\end{definition}

\begin{definition}
  A Fra\"{i}ss\'{e} class $\cK$ in a  relational language has the \textbf{strong amalgamation property with homomorphism gluing} (SAPHG) if for any $\sA, \sB_1, \sB_2\in\cK$ and embeddings $f_i:\sA\to \pau{\sB_i}$ for $i\in\{1,2\}$, there is a {strong amalgam}, i.e.~$\pau{\sC}\in\cK$ and embeddings $g_i:\pau{\sB}_i\to \pau{\sC}$ for $i\in\{1,2\}$ such that $g_1\circ f_1=g_2\circ f_2$ and $g_1(\sB_1)\cap g_2(\sB_2)=g_1f_1(\sA)$, with the property that whenever $h$ is a map from a subset of $\sC$ to some $\mathbb{D}\in\cK$ containing $g_1\circ f_1(\sA)$ in its domain and $h \circ g_1$ and $h \circ g_2$ are homomorphisms, then $h$ is also a homomorphism.
\end{definition}
{We remark that the SAPHG implies that the amalgam $\mic{\mathbb{C}}$ in the definition is ``canonical'' in a sense that agrees with several notions of canonical amalgamation in the literature~\cite{tent2013isometry,PaoliniShelahSIP, kaplan2019automorphism}; the SAPHG is strictly stronger than the existence of a canonical amalgam.}

\begin{theorem}[\cite{EJMPP}]\label{thm:SAPHG} Let $\sA$ be a homogeneous and H-homogeneous structure whose age has SAPHG. Then, $\End(\sA)$ equipped with $\pw$ has Property~$\mathrm{X}$ with respect to $\Aut(\sA)$.     
\end{theorem}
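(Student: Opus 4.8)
The goal is to verify Property~$\mathrm{X}$ for $\End(\sA)$ with respect to $\Aut(\sA)$, i.e.\ given $s\in\End(\sA)$ to produce $f_s,g_s\in\End(\sA)$ and $t_s\in\Aut(\sA)$ with $s=f_st_sg_s$ and such that for every $\pw$\-/neighbourhood $\cU$ of $t_s$ the set $f_s(\cU\cap\Aut(\sA))g_s$ is a $\pw$\-/neighbourhood of $s$. The natural first move is to take $t_s=\mathrm{id}$ so that we must write $s=f_sg_s$ and show $f_s(\cU\cap\Aut(\sA))g_s$ is a neighbourhood of $s$ for every neighbourhood $\cU$ of $\mathrm{id}$. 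Since a basic neighbourhood of $\mathrm{id}$ is the pointwise stabilizer $G_C$ of a finite set $C\subseteq\Omega$, and a basic neighbourhood of $s$ is determined by the values of $s$ on some finite set $D\subseteq\Omega$, what we really need is: for every finite $C$ there is a factorization $s=f_sg_s$ (with $f_s,g_s$ depending only on $s$, not on $C$) such that every $h\in\End(\sA)$ which agrees with $s$ on $D$ is of the form $f_s\gamma g_s$ for some $\gamma\in\Aut(\sA)$ fixing $C$ pointwise. So the first step is to fix the finite data and reduce the statement to this purely combinatorial extension problem inside $\sA$.

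The second step is to build the factorization using homogeneity and H-homogeneity. The idea is: given $s$, choose an embedding $g_s\colon\sA\to\sA$ whose image is a ``free'' copy of $\sA$ that meets $s(\sA)$ only in a controlled way, and then an endomorphism $f_s$ with $f_s\circ g_s=s$; more precisely one wants $g_s$ to be the restriction of an automorphism of a suitable amalgam, so that $g_s(\sA)$ and $\sA$ (sitting inside a common structure) form a strong amalgam over a finite base. This is where homogeneity enters: to realize the amalgam we need that the relevant finite configurations embed back into $\sA$, which homogeneity guarantees, and H-homogeneity guarantees that the partial map we want to extend (namely $s$ read through $g_s$) extends to a genuine endomorphism $f_s$. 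Concretely one takes a strong amalgam $\sB$ of $\sA$ with a disjoint isomorphic copy over the empty set (or over a fixed finite base), uses homogeneity to identify $\sB\cong\sA$, lets $g_s$ be the resulting self\-/embedding onto one of the two copies, and then defines a partial homomorphism on $g_s(\sA)$ sending it via $g_s^{-1}$ into $\sA$ and then by $s$; H-homogeneity extends a finite approximation and a limit/compactness argument (or a direct back\-/and\-/forth using homogeneity of $\sA$) produces $f_s\in\End(\sA)$ with $f_sg_s=s$.

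The third and genuinely load\-/bearing step is the surjectivity/openness claim, and this is where the full strength of SAPHG is used. Fix finite $D$; set $C:=g_s(D)$ together with whatever finite part of $g_s(\sA)$ is needed. Suppose $h\in\End(\sA)$ agrees with $s$ on $D$. We must produce $\gamma\in\Aut(\sA)$ fixing $C$ pointwise with $h=f_s\gamma g_s$. Because $g_s(\sA)$ is a strong amalgam half, the map that on $g_s(\sA)$ is $\mathrm{id}$ and elsewhere must be chosen to match $h$ against $f_s$ is, locally, a homomorphism on each of the two ``sides'' of the amalgam; SAPHG is precisely the hypothesis that guarantees such a side\-/wise\-/homomorphic map is globally a homomorphism, hence (being also injective, by the strongness of the amalgam, and using homogeneity) extends to an automorphism $\gamma$ of $\sA$. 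Tracking the identifications shows $\gamma$ fixes $C$ pointwise and $f_s\gamma g_s=h$ on all of $\sA$. This last step — verifying that the gluing map one wants is homomorphic on each side so that SAPHG applies, and that the resulting map is an embedding extendable to an automorphism fixing the prescribed finite set — is the main obstacle: it requires carefully choosing $C$ so that it pins down enough of $g_s(\sA)$, and checking that SAPHG is applied to the correct base $g_1f_1(A)$. Once this is in place, unwinding the neighbourhood definitions gives that $f_s(G_C\cap\Aut(\sA))g_s$ contains every $h\in\End(\sA)$ agreeing with $s$ on $D$, i.e.\ contains a basic $\pw$\-/neighbourhood of $s$, which is exactly Property~$\mathrm{X}$ with respect to $\Aut(\sA)$.
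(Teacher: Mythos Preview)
Your overall plan is on the right track---take $t_s=\mathrm{id}$, factor $s=f_sg_s$ through a self-embedding $g_s$ whose image is one half of a SAPHG amalgam, and use SAPHG/H-homogeneity to manufacture $f_s$ and later glue partial maps into automorphisms---but step~3 contains a concrete error, and the real difficulty is not addressed.

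You write that the desired $\gamma$ is ``the map that on $g_s(\sA)$ is $\mathrm{id}$ and elsewhere must be chosen to match $h$ against $f_s$''. But if $\gamma$ is the identity on $g_s(\sA)$ then $\gamma g_s=g_s$, hence $f_s\gamma g_s=f_sg_s=s$, not $h$. So that map can never witness $h\in f_s G_C g_s$ for $h\neq s$. The automorphism $\gamma$ must instead \emph{move} $g_s(\Omega)$ into a region where $f_s$ behaves like $h\circ g_s^{-1}$; SAPHG is then used to glue this moved copy with the identity on a finite set $C$ (not on all of $g_s(\sA)$) into a partial isomorphism that homogeneity extends to an automorphism fixing $C$.

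This exposes the deeper gap: you never say what $f_s$ does outside $g_s(\sA)$, and this is exactly where the work lies. To obtain $f_s\gamma g_s=h$ with $\gamma\in\Aut(\sA)$, one needs $\gamma(g_s(x))\in f_s^{-1}(h(x))$ for every $x$, injectively and preserving structure. Since $h$ is an arbitrary endomorphism (possibly highly non-injective, e.g.\ constant), this forces the fibres of $f_s$ to be rich enough that an embedding into them exists with the correct trace over $C$. In the construction one therefore builds $f_s$ so that, outside the distinguished copy $g_s(\sA)$, it restricts to isomorphisms from many further SAPHG-free copies of $\sA$ onto $\sA$; this provides, for each value $h(x)$, infinitely many preimages spread across independent copies, and a back-and-forth then produces an embedding $e_h$ with $f_se_h=h$ and $e_h|_{D}=g_s|_{D}$. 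Only at that point does SAPHG (applied to finite amalgams) let you verify that $e_h\circ g_s^{-1}$, together with the identity on $C$, is a partial isomorphism extending to the required $\gamma\in G_C$. Your proposal skips both the construction of $f_s$ on the complement and the handling of non-injective $h$; as written, the argument would at best go through only for injective $h$. A minor point: the quantifier order is also reversed---$C$ (hence $\cU$) is given and $D$ must be found, not the other way around.
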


\begin{example}\label{ex:upp} The assumptions of Theorem~\ref{thm:SAPHG} apply to the following structures:
\begin{enumerate}
    \item \pau{the random graph;} 
    \item the generic directed graph;
    \item the disjoint union $\omega \pau{\mathbb{K}_n}$ of countably many copies of the complete graph $\pau{\mathbb{K}_n}$;
        \item the generic strict poset;
\item any of the structures above including loops;
    \item the generic equivalence relation with $n$-many countably infinite classes $\pau{\mathbb{E}_{n, \omega}}$;
    \item the Cherlin-Hrushovski structure $\sA_{\mathrm{CH}}$.
\end{enumerate}
All structures above except the generic strict (or non-strict) poset and $\sA_{\mathrm{CH}}$ are such that their automorphism groups are known to have ample generics, and so automatic continuity with respect to $\csecondgroup$~\cite{HodgesHodkinsonLascarShelah}. With the exception of $\sA_{\mathrm{CH}}$ these examples are already mentioned in~\cite{EJMPP}; the claimed properties are easy to prove for  $\sA_{\mathrm{CH}}$. 
\end{example}
\cite{EJMMMP-zariski} also show that the endomorphism monoid of the countable atomless Boolean algebra $\pau{\sB_\infty}$ has Property~$\mathrm{X}$ with respect to its automorphism group, and the latter is known to have automatic continuity with respect to $\csecondgroup$.

The following proposition can be applied to deduce  that the examples above whose automorphism \pau{groups have} automatic continuity with respect to  $\csecondgroup$ are such that their endomorphism monoids have automatic continuity with respect to $\csecondsemi$:

\begin{proposition}[{\cite[Proposition 4.1]{EJMMMP-zariski}}] Let $G\in\csecondgroup$. Then, $G$ has automatic continuity with respect to $\csecondgroup$ if and only if it has automatic continuity with respect to $\csecondsemi$.
\end{proposition}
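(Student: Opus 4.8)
The plan is to prove the two implications of the ``if and only if'' separately, with the forward direction being trivial and the backward direction being the substantive one. The statement to prove: for $G\in\csecondgroup$, $G$ has automatic continuity with respect to $\csecondgroup$ iff it has automatic continuity with respect to $\csecondsemi$. Since every topological group is in particular a topological semigroup, and every group homomorphism from $G$ to a topological group is in particular a semigroup homomorphism to a topological semigroup, the class $\csecondsemi$ is ``larger'' than $\csecondgroup$ in the relevant sense; hence automatic continuity with respect to $\csecondsemi$ immediately gives automatic continuity with respect to $\csecondgroup$. So the forward direction (AC wrt $\csecondsemi$ $\Rightarrow$ AC wrt $\csecondgroup$) is a one-line observation.

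For the converse, suppose $G$ has automatic continuity with respect to $\csecondgroup$, and let $\phi\colon G\to \cS$ be a semigroup homomorphism into a second countable topological semigroup $\cS$. The key point is that the homomorphic image $\phi(G)$ of a group under a semigroup homomorphism is again a group: it is closed under the semigroup operation, it contains $\phi(e_G)$ which acts as a two-sided identity on $\phi(G)$, and for each $g\in G$ the element $\phi(g^{-1})$ is a two-sided inverse of $\phi(g)$ within $\phi(G)$. So $\phi$ factors as a group homomorphism $G\to\phi(G)$ followed by the inclusion $\phi(G)\hookrightarrow\cS$, and it suffices to show $\phi\colon G\to\phi(G)$ is continuous when $\phi(G)$ carries the subspace topology from $\cS$. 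The subspace topology makes $\phi(G)$ a second countable topological \emph{semigroup}, and multiplication is continuous; what we need is that inversion is continuous on $\phi(G)$, so that it becomes a second countable topological \emph{group}, at which point automatic continuity wrt $\csecondgroup$ finishes the argument.

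The main obstacle is therefore exactly this: showing that the subspace topology on $\phi(G)$, which a priori is only a semigroup topology on an (abstract) group, is automatically a group topology, i.e. that inversion is continuous. The natural route is a Baire-category / Pettis-type argument: one would like to pass to a Polish (or at least completely metrizable, or Baire) refinement. One clean way is to note that a continuous bijective homomorphism from a Polish group onto a second countable topological group that is a semigroup topology is automatically a homeomorphism once one knows inversion is continuous — so instead I would argue as follows. Give $\phi(G)$ the \emph{coarsest group topology refining} the subspace topology (the join of the subspace topology with its image under the inversion map), call it $\tau'$; this is a second countable group topology on $\phi(G)$ finer than the subspace topology. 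Then the identity map $(\phi(G),\tau')\to(\phi(G),\text{subspace})$ is a continuous group homomorphism, but more usefully, consider the composite $\psi\colon G\xrightarrow{\phi}(\phi(G),\tau')$: this is a homomorphism into a second countable topological group, hence continuous by hypothesis, and composing with the (continuous) identity $(\phi(G),\tau')\to(\phi(G),\text{subspace})\hookrightarrow\cS$ shows $\phi$ is continuous. The one thing to verify carefully is that $\tau'$ really is a second countable \emph{group} topology — continuity of multiplication for $\tau'$ follows since both $\tau'$ and its ``inverted'' copy make multiplication continuous (multiplication is continuous for a topology iff it is for the opposite-order multiplication composed with inversion, using that $(ab)^{-1}=b^{-1}a^{-1}$), and continuity of inversion for $\tau'$ is built in by construction since $\tau'$ is inversion-symmetric; second countability is preserved under taking the join of two second countable topologies. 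This reduces everything to the cited automatic continuity hypothesis applied once.
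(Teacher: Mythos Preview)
Your proof is correct. The paper itself does not prove this proposition; it merely cites it from \cite{EJMMMP-zariski}, so there is no in-paper proof to compare against. Your argument --- passing to the image group $\phi(G)$ and equipping it with the join of the subspace topology and its pushforward under inversion to obtain a second countable group topology --- is essentially the standard one (often phrased equivalently via the embedding $g\mapsto(\phi(g),\phi(g^{-1}))$ into $\cS\times\cS^{\mathrm{op}}$), and your verification that multiplication remains continuous for the join is accurate. One cosmetic point: you swap the names ``forward'' and ``backward'' relative to how the iff is stated, but the content is unaffected.
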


{While Property~$\mathrm{X}$ is a powerful tool for showing  maximality of $\pw$ and lifting of automatic continuity from subgroups of a monoid,~\cite{pinsker2023semigroup} note that there is some leeway in Definition~\ref{def:propertyX}: the decomposition of $s\in S$ could be of arbitrary length without changing the truth of Theorem~\ref{thm:propertyXmain}; they call this generalization Property~$\tcl{\mathrm{X}}$. Similarly, in the same definition $s$ can be composed from the left with a left-invertible element of $\cS$, leading to \textbf{pseudo-Property~$\bf \tcl{X}$}. They apply this notion to  $\End(\pau{\mathbb Q;\leq})$ equipped with $\tau_{\mathrm{rich}}$, a finer topology  than $\pw$, and the subgroup $\Aut(\pau{\mathbb Q;\leq})$ equipped with $\pw$ to conclude that any  Polish semigroup topology on $\End(\pau{\mathbb Q;\leq})$ must be contained in $\tau_{\mathrm{rich}}$. By a series of further arguments, of which some can be recast as statements about pseudo-Property~$\tcl{\mathrm{X}}$, and others involve different techniques such as~Baire category, they obtain:
}
\begin{theorem}[\cite{pinsker2023semigroup}]\label{thm:Qupp}
   {$\End(\pau{\mathbb Q;\leq})$ has the UPP.}
\end{theorem}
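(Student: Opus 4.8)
\textbf{Proof plan for Theorem~\ref{thm:Qupp}.}

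The plan is to show that $\pw$ is simultaneously the unique \emph{maximal} and the unique \emph{minimal} Polish semigroup topology on $\End(\mathbb Q,\leq)$, so that any Polish semigroup topology must coincide with it. For minimality, I would invoke the Zariski machinery: one shows $\zar=\pw$ on $\End(\mathbb Q,\leq)$ (this is one of the structures for which this equality is known, e.g.\ via Theorem~\ref{thm:zariskipinskschind} since $(\mathbb Q,\leq)$ is a model-complete core without algebraicity, or directly as in~\cite{EJMMMP-zariski, EJMPP}), and since $\zar$ is contained in every Hausdorff semigroup topology, $\pw$ is minimal among Hausdorff---hence among Polish---semigroup topologies. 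The harder direction is maximality, and this is where pseudo-Property~$\tcl{\mathrm X}$ enters.

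For maximality, the key intermediate object is the finer topology $\tau_{\mathrm{rich}}$ on $\End(\mathbb Q,\leq)$. First I would verify that $\tau_{\mathrm{rich}}$ is itself a Polish semigroup topology refining $\pw$ (this is the content of the setup in~\cite{pinsker2023semigroup}; $\tau_{\mathrm{rich}}$ is generated by $\pw$ together with open sets recording, for each endomorphism, finitary data about ``gaps'' and the behaviour at $\pm\infty$). Next, the central step: show that $\End(\mathbb Q,\leq)$ equipped with $\tau_{\mathrm{rich}}$ has pseudo-Property~$\tcl{\mathrm X}$ with respect to $\Aut(\mathbb Q,\leq)$ equipped with $\pw$. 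Concretely, given an endomorphism $s$, one decomposes $s = e\cdot t_k\cdot s\cdot t_{k-1}\cdots t_1\cdot s\cdot t_0$ (allowing a left-invertible $e$ and an arbitrarily long alternating word, with $t_i$ automorphisms or suitable fixed endomorphisms and the reads of $s$ controlled) in such a way that pushing a $\pw$-neighbourhood of the relevant automorphism argument through the word produces a $\tau_{\mathrm{rich}}$-neighbourhood of $s$; the flexibility of the generalised decomposition is exactly what lets one encode the extra $\tau_{\mathrm{rich}}$-data (gaps, endpoints) that plain Property~$\mathrm X$ with respect to $\Aut$ cannot reach. The analogue of Theorem~\ref{thm:propertyXmain}(1) for pseudo-Property~$\tcl{\mathrm X}$ then yields that $\tau_{\mathrm{rich}}$ is maximal among Polish semigroup topologies on $\End(\mathbb Q,\leq)$; in particular \emph{every} Polish semigroup topology on $\End(\mathbb Q,\leq)$ is contained in $\tau_{\mathrm{rich}}$.

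It remains to close the gap between $\pw$ and $\tau_{\mathrm{rich}}$: I must show that any Polish semigroup topology $\tau$ with $\pw\subseteq\tau\subseteq\tau_{\mathrm{rich}}$ actually equals $\pw$. Here one argues that the subbasic $\tau_{\mathrm{rich}}$-open sets not already in $\pw$ cannot be $\tau$-open. The tool is Baire category together with the semigroup structure: if such a set were $\tau$-open, translating it around by elements of $\End(\mathbb Q,\leq)$ (and using that $(\End(\mathbb Q,\leq),\tau)$ is Polish, so Baire) would force a contradiction, e.g.\ exhibit a $\pw$-convergent sequence of endomorphisms staying in the complement of the set while its limit lies inside it, violating that $\tau$ is a semigroup topology finer than $\pw$ on a countable-to-one action. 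Assembling the three pieces---$\pw$ minimal, $\tau_{\mathrm{rich}}$ maximal, and no strictly intermediate Polish semigroup topology---gives that $\pw$ is the unique Polish semigroup topology, i.e.\ the UPP.

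\textbf{Main obstacle.} The crux is the second step: setting up $\tau_{\mathrm{rich}}$ correctly and verifying pseudo-Property~$\tcl{\mathrm X}$ for it. One has to choose the right enrichment of $\pw$ (fine enough to be maximal via the decomposition argument, yet still Polish and a semigroup topology), and the explicit word decompositions of an arbitrary endomorphism of $(\mathbb Q,\leq)$ into automorphisms and a bounded supply of fixed endomorphisms---handling the behaviour at the two ends of the order and at gaps in the image---are delicate and order-theory-specific. The final ``no intermediate topology'' step is also non-trivial, as it genuinely uses Baire category rather than the Property~$\mathrm X$ formalism.
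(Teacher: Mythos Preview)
Your three-step outline---bound any Polish semigroup topology below by $\pw$ (via $\zar=\pw$), above by $\tau_{\mathrm{rich}}$ (via pseudo-Property~$\tcl{\mathrm X}$), then rule out anything strictly between using Baire category---matches the sketch the paper gives of the argument from~\cite{pinsker2023semigroup}. The paper's sketch does not single out the Zariski step for the lower bound, but your route through Theorem~\ref{thm:zariskipinskschind} is legitimate: $(\mathbb Q,\leq)$ has no algebraicity and a mobile one-point core, so $\zar=\pw$ on $\End(\mathbb Q,\leq)$.

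There is, however, an internal inconsistency in your second step. You propose to ``verify that $\tau_{\mathrm{rich}}$ is itself a Polish semigroup topology refining $\pw$'', and later to show that any Polish semigroup topology $\tau$ with $\pw\subseteq\tau\subseteq\tau_{\mathrm{rich}}$ equals $\pw$. If $\tau_{\mathrm{rich}}$ really were a Polish semigroup topology strictly finer than $\pw$, then taking $\tau=\tau_{\mathrm{rich}}$ in your own third step is already a contradiction---and, more directly, the UPP would fail before you begin. The point is that $\tau_{\mathrm{rich}}$ is \emph{not} a semigroup topology (composition is not jointly $\tau_{\mathrm{rich}}$-continuous); the pseudo-Property~$\tcl{\mathrm X}$ machinery is flexible enough to produce the upper bound $\tau\subseteq\tau_{\mathrm{rich}}$ for every Polish semigroup topology $\tau$ without $\tau_{\mathrm{rich}}$ itself being one. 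Relatedly, your displayed decomposition $s = e\cdot t_k\cdot s\cdot t_{k-1}\cdots t_1\cdot s\cdot t_0$ places $s$ on both sides, which is not what (pseudo-)Property~$\tcl{\mathrm X}$ asserts: the word on the right should be built from fixed elements of the monoid and one (or several) elements of $G$, with $s$ appearing only on the left.
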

{\cite{pinsker2023semigroup} note that most of the results on the UPP in Example~\ref{ex:upp} do not use the full strength of Polishness: they have  unique second countable metrisable semigroup topologies. This occurs whenever the UPP can be shown \mic{by} applying the second item of Theorem~\ref{thm:propertyXmain}. The considerably more complex proof of Theorem~\ref{thm:Qupp}, which only partially  fits into this framework, additionally requires the space to be Baire.}

{For the endomorphism monoids of some well-studied structures the  status of the UPP is still open; among homogeneous graphs, the only one for which whether the UPP holds is not known is the following (see~\cite{schindler2023semigroup}):}

\begin{question} Does the endomorphism monoid of the complete multipartite graph $\mathbb{K}_{\omega, \omega}$ (with $\omega$-many blocks of size $\omega$) in the language of graphs have UPP?
\end{question}

{Property~X can also be applied in the context of clones. Given a topological clone $\cC$ with a topology $\tau$, we can associate with it a topological semigroup $\cS_{\cC}$ with domain $\cC$, topology $\tau$, and the operation $\ast$ given by $f\ast g=f(g, \dots, g)$. Whilst the algebraic structure of $\cS_{\cC}$ is weaker than that of $\cC$ in the sense that not every semigroup homomorphism with respect to $\ast$ is a clone homomorphism, it is often sufficiently rich to restrict the possible clone topologies on $\cC$:}

\begin{lemma}[{\cite[Lemma 7.1]{EJMMMP-zariski}}] Let $\cC$ be a topological clone and let $\cS_{\cC}$ have automatic continuity with respect to $\csecondsemi$. Then, $\cC$ has automatic continuity with respect to $\csecondclone$.
\end{lemma}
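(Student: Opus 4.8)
The statement to prove is: if $\cC$ is a topological clone and the associated semigroup $\cS_{\cC}$ (on domain $\cC$ with operation $f\ast g=f(g,\dots,g)$ and the same topology) has automatic continuity with respect to $\csecondsemi$, then $\cC$ has automatic continuity with respect to $\csecondclone$. The plan is to take an arbitrary clone homomorphism $\xi:\cC\to\cD$ into a second countable topological clone $\cD$ and show it is continuous, by first extracting continuity of $\xi$ viewed as a semigroup map and then promoting this to continuity as a clone map using the arity-wise structure of the sum topology on a clone.

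First I would observe that a clone homomorphism $\xi:\cC\to\cD$ is, in particular, a semigroup homomorphism from $\cS_{\cC}$ to $\cS_{\cD}$: indeed $\xi(f\ast g)=\xi(f(g,\dots,g))=\xi(f)(\xi(g),\dots,\xi(g))=\xi(f)\ast\xi(g)$, using that $\xi$ preserves composition and arities. Since $\cD$ is a second countable topological clone, each $\cD\cap\mathcal{O}^{(n)}$ is clopen and the whole space is second countable and metrizable, so $\cS_{\cD}$ is a second countable topological semigroup, i.e.~$\cS_{\cD}\in\csecondsemi$. By the hypothesis that $\cS_{\cC}$ has automatic continuity with respect to $\csecondsemi$, the map $\xi:\cS_{\cC}\to\cS_{\cD}$ is continuous; since $\cS_{\cC}$ and $\cC$ carry the same topology and likewise for $\cS_{\cD}$ and $\cD$, this already gives that $\xi:\cC\to\cD$ is continuous as a map of topological spaces. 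The point is that ``continuity'' of a clone homomorphism is exactly continuity of the underlying map between the carrier spaces — the clone operations $\mathrm{comp}^n_m$ are part of the structure of a topological clone but continuity of $\xi$ is a purely topological condition on the map — so no further work on the higher-arity composition operations is needed, only on the sum topology.

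Thus the remaining step is purely about the sum (coproduct) topology: a map into $\cD=\bigsqcup_n(\cD\cap\mathcal{O}^{(n)})$ is continuous iff its restriction to the preimage of each clopen piece $\cD\cap\mathcal{O}^{(n)}$ is continuous into that piece. Since $\xi$ preserves arities, $\xi^{-1}(\cD\cap\mathcal{O}^{(n)})=\cC\cap\mathcal{O}^{(n)}$, which is clopen in $\cC$; so it suffices to check that $\xi$ restricted to $\cC\cap\mathcal{O}^{(n)}$ is continuous for each $n$, and this follows from continuity of $\xi$ as a map between the ambient spaces established above (restriction of a continuous map to a subspace is continuous, and continuity into a clopen subspace is the same as continuity into the whole space). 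I would spell this out briefly to make the reduction to $\cS_{\cC}$ airtight.

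I do not expect a serious obstacle here; the lemma is essentially a bookkeeping argument once Theorem~\ref{thm:propertyXmain}-style reasoning is not even invoked. The only subtlety worth a sentence is confirming that $\cS_{\cD}$ genuinely lands in $\csecondsemi$: one must check $\ast$ is continuous on $\cD$, which is immediate because $f\ast g=\mathrm{comp}^n_m(f,g,\dots,g)$ composes the (jointly continuous) clone operation with the continuous diagonal $g\mapsto(g,\dots,g)$ and a restriction to the clopen arity pieces, so $\ast$ is continuous; and second countability and metrizability of $\cD$ pass to $\cS_{\cD}$ since the topology is unchanged. With that in place, the chain ``clone homomorphism $\Rightarrow$ semigroup homomorphism into $\csecondsemi$ $\Rightarrow$ continuous $\Rightarrow$ continuous clone homomorphism'' closes the proof.
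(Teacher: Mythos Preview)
Your argument is correct and is the natural one: a clone homomorphism is automatically a $\ast$-semigroup homomorphism, the target $\cS_{\cD}$ lies in $\csecondsemi$ because second countability is inherited and $\ast$ is continuous via the clone composition and the diagonal, and continuity of $\xi$ as a map of underlying spaces is all that ``continuous clone homomorphism'' means. The paper does not give its own proof of this lemma (it is quoted from \cite{EJMMMP-zariski}), so there is nothing to compare against beyond noting that your route is the standard one. Two minor clean-ups: you do not need metrizability of $\cD$ anywhere (the class $\csecondsemi$ only asks for second countability), and your paragraph on the sum topology is redundant once you have continuity of $\xi$ as a map between the carrier spaces, since those spaces already carry the sum topology.
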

In~\cite{EJMMMP-zariski}, the above lemma and Property~X are used to show that $\pw$ is the unique second countable Hausdorff clone topology on the polymorphism clone of the countable atomless Boolean algebra. In~\cite[Theorem 6.2]{EJMPP}, it is shown that homogeneous, H-homogeneous structures $\sA$ with the SAPHG and whose age is closed under finite non-empty direct products are such that $\cS_{\Pol(\sA)}$ 
has Property~X with respect to $\Aut(\sA)$. This is then applied to prove UPP and automatic continuity with respect to $\csecondclone$ for various structures, such as the random graph and $\omega \mic{\mathbb{K}}_\omega$ with loops. We invite the reader to compare this result also to~\cite[Theorem 4.1]{PechPechHomeo}. In the latter theorem, the authors introduce the \textbf{amalgamated extension property} (AEP)~\cite[Definition 3.23]{PechPechHomeo}, a weaker amalgamation property than the SAPHG. They show that homogeneous, H-homogeneous structures $\sA$ with the AEP whose age is closed under finite non-empty direct products, and whose endomorphisms contain all constant functions, are such that $\Pol(\sA)$ has automatic homeomorphicity with respect to $\cclone$. The latter result is based on ``gate covering'' techniques of~\cite{Reconstruction}, but, unlike the original paper, does not rely on upwards transfers of automatic homeomorphicity from the underlying automorphism group.

\subsection{More on automatic continuity and open mappings}

Preceding the positive instances in Example~\ref{ex:upp}, automatic continuity for semigroups was believed to be a less fruitful notion than for groups due to the following result:

\begin{theorem}[{\cite[Corollary 10]{Reconstruction}}] No $\overline{G}\in\ceemb$ has automatic continuity with respect to $\csemi$.
\end{theorem}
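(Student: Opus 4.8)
The plan is to exhibit, for an arbitrary $\overline{G}\in\ceemb$, a semigroup homomorphism from $\overline{G}$ into some element of $\csemi$ which is discontinuous, thereby witnessing the failure of automatic continuity. The natural candidate is not a homomorphism \emph{onto} a different monoid but rather a self-map or a map to a copy of $\overline{G}$ equipped with a wilder topology; however, the cleanest route is to build a discontinuous endomorphism of $\overline{G}$ (as an abstract semigroup) by exploiting the extra flexibility that elementary self-embeddings provide over automorphisms. First I would recall that $\overline{G}=\EEmb(\sA)$ for an $\omega$-categorical (in particular saturated) structure $\sA$, and that any $f\in\overline{G}$ which is non-surjective has proper image $f(\sA)\preceq\sA$; the key structural input is that $\overline{G}$ contains a rich supply of non-invertible elements with ``room to move'' in a way $\Aut(\sA)$ does not.

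The main idea I would pursue is a twisting construction analogous in spirit to the one in the proof of Theorem~\ref{thm:ACdoesnotimplyAH}: partition $\overline{G}$ into a ``small-image'' part and a ``large-image'' part according to some coarse invariant of the image of $f$ (for instance, whether $\Omega\setminus\mathrm{Im}(f)$ is finite or infinite, or more robustly using a nested family of such conditions as in Lemma~\ref{lem:countablymany}), and define a map $\eta$ on $\overline{G}$ that composes with a fixed element $\alpha$ on the large-image part and is the identity on the small-image part. The conditions to check are: (i) $\eta$ is a semigroup homomorphism — this requires the chosen invariant to behave additively under composition (as $\dim(\Omega\setminus\mathrm{Im}(fg))=\dim(\Omega\setminus\mathrm{Im}(f))+\dim(\Omega\setminus\mathrm{Im}(g))$ in the pregeometry case), together with $\alpha$ lying in the centralizer of the relevant elements; (ii) $\eta$ lands inside some $\cS\in\csemi$ — taking $\cS=\overline{G}$ or an ambient closed monoid; (iii) $\eta$ is discontinuous — witnessed by a sequence $(f_n)$ in the small-image part converging (in $\pw$) to some $g$ in the large-image part with $\alpha g\neq g$. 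Since surjective elementary embeddings are dense in $\overline{G}$, one can always approximate a non-surjective $g$ by automorphisms, guaranteeing step (iii).

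The step I expect to be the genuine obstacle is making the construction work for \emph{every} $\overline{G}\in\ceemb$ without a convenient invariant like dimension: as the excerpt notes just before this theorem, there are $\omega$-categorical structures (built from Hrushovski-type constructions) for which every non-surjective elementary embedding has co-infinite image, and more generally one cannot rely on $\acl$ forming a pregeometry. So the real work is to isolate a universally available two-valued ``size of the complement of the image'' invariant, or to find a different homomorphism altogether. A robust alternative is to use the topology $\tgcl$ from the proof idea of Lemma~\ref{lem:atleasttwo}: since every $f\in\overline{G}$ is injective, $\overline{G}$ embeds as a (non-closed, generally) submonoid of $(\mathrm{Inj}(\Omega),\mathcal{I}_4)$, whose induced topology $\tgcl$ on $\overline{G}$ is a Polish semigroup topology strictly finer than $\pw$ (the group of invertibles becomes closed in $\tgcl$ but is dense, hence not closed, in $\pw$). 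Then the identity map $(\overline{G},\pw)\to(\overline{G},\tgcl)$ is an algebraic isomorphism into an element of $\csemi$ which is \emph{not} continuous, because a $\tgcl$-open set such as $\mathcal{W}_a\cap\overline{G}=\{f\in\overline{G}:a\notin\mathrm{Im}(f)\}$ is not $\pw$-open — any $\pw$-basic neighbourhood of a point missing $a$ contains, by genericity/saturation, elements whose image covers $a$. This argument is uniform over all of $\ceemb$ and avoids any pregeometry hypothesis, so I would present it as the main proof and relegate the twisting construction to a remark.

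\begin{remark}
Concretely: given $\overline{G}\in\ceemb$, let $\theta:\overline{G}\to(\overline{G},\tgcl)$ be the identity on underlying sets, where $\tgcl$ is the restriction to $\overline{G}$ of the topology $\mathcal I_4$ on $\mathrm{Inj}(\Omega)$ generated by $\pw$ together with the sets $\mathcal W_a=\{f:a\notin\mathrm{Im}(f)\}$. Then $(\overline{G},\tgcl)\in\csemi$ and $\theta$ is a semigroup isomorphism, but $\theta^{-1}(\mathcal W_a\cap\overline{G})$ fails to be $\pw$-open since the group of invertibles of $\overline{G}$ is $\pw$-dense in $\overline{G}$ while being disjoint from $\mathcal W_a$. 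Hence $\theta$ is discontinuous, so $\overline{G}$ does not have automatic continuity with respect to $\csemi$.
\end{remark}
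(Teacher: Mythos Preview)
Your proposal has a genuine gap: the assertion ``$(\overline{G},\tgcl)\in\csemi$'' is unjustified and does not follow from what you cite. Lemma~\ref{lem:atleasttwo} (and the reference to~\cite{EJMMMP-zariski}) establishes that $\tgcl$ is a \emph{Polish semigroup topology} on $\overline{G}$, but $\csemi$ is the class of closed submonoids of $\Omega^\Omega$ equipped with $\pw$, which is a strictly smaller class than that of all Polish topological monoids. To place $(\overline{G},\tgcl)$ in $\csemi$ you would need to exhibit a faithful monoid action of $\overline{G}$ on a countable set whose induced pointwise topology is exactly $\tgcl$; this is not automatic, and attempting a direct construction (e.g.\ via partial inverses) runs into the problem that $f\mapsto f^{-1}$ on images is an anti-homomorphism, not a homomorphism. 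So as written, your identity map goes into the wrong category of targets.

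The paper gives no proof of this theorem (it only cites~\cite{Reconstruction}), but the argument there is both simpler and avoids this issue entirely. One maps $\overline{G}$ to the two-element monoid $\cT=\{e,z\}$ (with $z$ an absorbing element), realized as $\{\mathrm{id},\text{constant}\}$ on a two-point set, by sending invertible elements to $e$ and non-invertible ones to $z$. This is a semigroup homomorphism because in a monoid of injections a composite $fg$ is surjective if and only if both $f$ and $g$ are; the target $\cT$ is trivially an element of $\csemi$; and the map is discontinuous since $\theta^{-1}(\{e\})=G$ is dense but not closed in $\overline{G}$ (the latter because every infinite $\omega$-categorical structure has a non-surjective elementary self-embedding). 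Your twisting idea and your $\tgcl$-idea both try to distinguish invertibles from non-invertibles --- the right instinct --- but the two-element monoid does this in one line and lands squarely in $\csemi$ without any representation-theoretic detour.
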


{The small index property for groups can be meaningfully extended to semigroups. A \textbf{right congruence} on a semigroup $\cS$ is an equivalence relation $\mathcal{E}\subseteq \cS\times\cS$ such that whenever $(g,f)\in\mathcal{E}$ and $h\in{S}$, then $(hg, hf)\in\mathcal{E}$. We define a left-congruence analogously but with respect to composition from the other side.} 

\begin{definition}A  topological semigroup $\cS$ has the \textbf{right small index property} (rSIP) if every right congruence with countably many classes is open in $\cS\times\cS$. We define analogously the \textbf{left small index property} (lSIP). 
\end{definition}

\begin{proposition}[{\cite[Proposition 4.3]{EJMMMP-zariski}}] A topological monoid $\cS$ has the rSIP if and only if it has automatic continuity with respect to $\csemi$.
\end{proposition}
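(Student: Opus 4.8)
A topological monoid $\cS$ has the rSIP if and only if it has automatic continuity with respect to $\csemi$.

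The plan is to argue both directions via the standard correspondence between right congruences on a semigroup and semigroup actions (equivalently, semigroup homomorphisms into transformation monoids), mirroring exactly the group-theoretic proof that the small index property characterizes automatic continuity with respect to $\cgroup$. First I would fix notation: given a semigroup homomorphism $\theta\colon\cS\to\cT$ with $\cT\in\csemi$ a closed submonoid of $\Delta^\Delta$ for a countable set $\Delta$, and given a point $d\in\Delta$, define the relation $\mathcal E_d:=\{(f,g)\in\cS\times\cS\mid \theta(f)(d)=\theta(g)(d)\}$. One checks this is a right congruence: it is clearly an equivalence relation, and if $\theta(f)(d)=\theta(g)(d)$ then for any $h\in\cS$ we have $\theta(hf)(d)=\theta(h)(\theta(f)(d))=\theta(h)(\theta(g)(d))=\theta(hg)(d)$, using that $\theta$ is a homomorphism. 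Moreover $\mathcal E_d$ has at most countably many classes, since the map $f/\mathcal E_d\mapsto\theta(f)(d)$ is an injection from the set of classes into the countable set $\Delta$. (Conversely, from any right congruence $\mathcal E$ with countably many classes one recovers an action of $\cS$ on the countable set $\cS/\mathcal E$ by $g\cdot(f/\mathcal E):=(gf)/\mathcal E$, well-defined precisely because $\mathcal E$ is a right congruence; this gives a homomorphism $\cS\to(\cS/\mathcal E)^{(\cS/\mathcal E)}$, and one takes its topological closure to land in $\csemi$.)

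For the direction ``rSIP $\Rightarrow$ automatic continuity'': suppose $\cS$ has the rSIP and let $\theta\colon\cS\to\cT\in\csemi$ be a homomorphism. To show $\theta$ is continuous it suffices to show that the preimage of each subbasic open set $\cU_{(a,b)}\cap\cT=\{g\in\cT\mid g(a)=b\}$ is open in $\cS$. Now $\theta^{-1}(\cU_{(a,b)})=\{f\in\cS\mid\theta(f)(a)=b\}$ is a (possibly empty) union of $\mathcal E_a$-classes; since $\mathcal E_a$ is a right congruence with countably many classes, rSIP tells us $\mathcal E_a$ is open in $\cS\times\cS$, hence each $\mathcal E_a$-class, being $\{g : (f,g)\in\mathcal E_a\}$ for fixed $f$, is open (it is the fibre of the continuous map $g\mapsto(f,g)$ over the open set $\mathcal E_a$); a union of open sets is open, so $\theta^{-1}(\cU_{(a,b)})$ is open and $\theta$ is continuous.

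For the converse ``automatic continuity $\Rightarrow$ rSIP'': let $\mathcal E$ be a right congruence on $\cS$ with countably many classes. Form the action of $\cS$ on the countable set $Q:=\cS/\mathcal E$ described above, giving a homomorphism $\rho\colon\cS\to Q^Q$; its image need not be closed, so set $\cT:=\overline{\rho(\cS)}$, which is a closed submonoid of $Q^Q$ and hence lies in $\csemi$. The corestriction $\rho\colon\cS\to\cT$ is a semigroup homomorphism into a member of $\csemi$, so by hypothesis it is continuous. Then for any fixed class $q_0:=f_0/\mathcal E\in Q$, the set $\{g\in\cS\mid \rho(g)(q_0)=q_0\}=\{g\in\cS\mid (gf_0)/\mathcal E=f_0/\mathcal E\}$ is open, being the $\rho$-preimage of the basic clopen set $\cU_{(q_0,q_0)}$. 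Iterating this over the (countably many) points of $Q$ and unwinding, one shows that $\mathcal E$ itself is open in $\cS\times\cS$: indeed $(f,g)\in\mathcal E$ iff for the class $q:=e/\mathcal E$ of the identity one has $\rho(f)(q)=\rho(g)(q)$, and $\mathcal E=\bigcup_{q\in Q}\big(\rho^{-1}(\cU_{(e/\mathcal E,\,q)})\times\rho^{-1}(\cU_{(e/\mathcal E,\,q)})\big)$ is a union of products of open sets, hence open.

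I expect the main obstacle to be purely bookkeeping rather than conceptual: one must be careful that $\cS$ is a \emph{monoid} (the identity is what lets us recover $\mathcal E$ from the action via the class of $e$, and is needed so that $\cT$ is genuinely a submonoid of $Q^Q$), that passing to the topological closure $\cT=\overline{\rho(\cS)}$ preserves being a homomorphism target in $\csemi$ without disturbing continuity of the corestriction, and that ``open in $\cS\times\cS$'' for $\mathcal E$ is correctly matched with ``every $\mathcal E$-class is open in $\cS$'' — these are equivalent for an equivalence relation on a topological group or monoid only after a small argument, so I would state the definition of rSIP in whichever of the two equivalent forms makes the proof cleanest and remark on the equivalence. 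Everything else is a routine transcription of the classical group argument.
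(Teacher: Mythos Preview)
The paper does not give its own proof of this statement; it simply cites \cite[Proposition 4.3]{EJMMMP-zariski}. Your argument is correct and is the natural adaptation of the classical group-theoretic proof (SIP $\Leftrightarrow$ automatic continuity with respect to $\cgroup$) to the semigroup setting, via the correspondence between right congruences of countable index and homomorphisms into transformation monoids on countable sets. A few minor remarks: the equivalence you flag between ``$\mathcal E$ open in $\cS\times\cS$'' and ``every $\mathcal E$-class open in $\cS$'' is immediate for any equivalence relation on any topological space (in one direction take sections, in the other write $\mathcal E=\bigcup_{[f]}[f]\times[f]$), so no monoid structure is needed there; the place where the identity is genuinely used is, as you say, to recover $\mathcal E$ as the kernel relation at the class $e/\mathcal E$ in the converse direction, and to ensure $\overline{\rho(\cS)}$ is a submonoid rather than merely a subsemigroup.
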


\begin{proposition}[{\cite[Corollary 4.5]{EJMMMP-zariski}}] If a topological monoid $\cS$ has automatic continuity with respect to $\csecondsemi$, then it has both the rSIP and the lSIP.
\end{proposition}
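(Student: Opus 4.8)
I would split the claim into its two halves. The rSIP part is essentially free: every closed submonoid of $\Omega^\Omega$ is a second countable topological semigroup, so $\csemi\subseteq\csecondsemi$, and therefore automatic continuity with respect to $\csecondsemi$ implies automatic continuity with respect to $\csemi$. By the immediately preceding proposition, \cite[Proposition~4.3]{EJMMMP-zariski}, the latter is equivalent to the rSIP, so there is nothing more to do here.

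For the lSIP my plan is to reduce to the rSIP by passing to the opposite monoid $\cS^{\mathrm{op}}$ (same underlying topological space, multiplication reversed). The key step is to check that $\cS^{\mathrm{op}}$ still has automatic continuity with respect to $\csecondsemi$: a semigroup homomorphism $\cS^{\mathrm{op}}\to\cT$ is the very same function as a semigroup homomorphism $\cS\to\cT^{\mathrm{op}}$; the class $\csecondsemi$ is closed under $\cT\mapsto\cT^{\mathrm{op}}$ because taking the opposite changes neither the underlying set nor its topology; and continuity of a map between topological semigroups does not see the multiplication at all. So every homomorphism out of $\cS^{\mathrm{op}}$ into a member of $\csecondsemi$ is continuous, and $\cS^{\mathrm{op}}$ inherits the rSIP from the first part. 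Finally, a left congruence on $\cS$ is exactly a right congruence on $\cS^{\mathrm{op}}$ with the same classes, and a subset of $\cS^{\mathrm{op}}\times\cS^{\mathrm{op}}$ is open iff it is open in $\cS\times\cS$; hence the rSIP of $\cS^{\mathrm{op}}$ is literally the lSIP of $\cS$.

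For concreteness, here is the unrolled form of the lSIP argument (which is also a way to see why \cite[Proposition~4.3]{EJMMMP-zariski} holds): given a left congruence $\mathcal{E}$ with countably many classes, set $X:=\cS/\mathcal{E}$, a countable discrete space, and define $\rho(h)\colon X\to X$ by $[g]\mapsto[gh]$. The left-congruence property makes each $\rho(h)$ well defined; one checks $\rho(h_1h_2)=\rho(h_2)\circ\rho(h_1)$ and $\rho(1)=\mathrm{id}_X$, so $\rho$ is a monoid homomorphism from $\cS$ into $(X^X)^{\mathrm{op}}$, a member of $\csecondsemi$. By hypothesis $\rho$ is continuous; composing with evaluation at the class $[1]$ of the identity gives that $h\mapsto\rho(h)([1])=[h]$ is continuous from $\cS$ into the discrete space $X$. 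Thus every $\mathcal{E}$-class is clopen in $\cS$, and $\mathcal{E}=\bigcup_{[f]\in X}[f]\times[f]$ is open in $\cS\times\cS$.

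I do not expect a real obstacle: the content is bookkeeping, and the single place where care is needed is the opposite-monoid formalism — one must verify that ``opposite'' leaves topologies (hence the notion of continuity and membership in $\csecondsemi$) untouched while interchanging left and right congruences and fixing the product topology on $\cS\times\cS$. The monoid (rather than merely semigroup) hypothesis enters through the identity element, which supplies the base point $[1]$ with $\rho(h)([1])=[h]$ in the concrete version, and is likewise built into \cite[Proposition~4.3]{EJMMMP-zariski}.
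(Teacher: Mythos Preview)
The paper does not supply its own proof here: the proposition is stated with a bare citation to \cite[Corollary~4.5]{EJMMMP-zariski}, so there is nothing internal to compare against. Your argument is correct and is essentially the standard one. The rSIP half is immediate from the inclusion $\csemi\subseteq\csecondsemi$ together with the preceding Proposition~4.3, and your opposite-monoid reduction for the lSIP is sound; the unrolled version with $\rho\colon\cS\to (X^X)^{\mathrm{op}}$ is the cleanest route and matches the paper's convention on left/right congruences (note that in this paper a \emph{left} congruence is stable under multiplication on the right, which is exactly what makes $[g]\mapsto[gh]$ well defined).

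One small remark on the abstract version: when you say ``$\cS^{\mathrm{op}}$ inherits the rSIP from the first part'', you are implicitly using that Proposition~4.3 is stated for an arbitrary topological monoid, not only for closed submonoids of $\Omega^\Omega$; this is indeed how the paper phrases it, so the step is legitimate, but it is worth making explicit since $\cS^{\mathrm{op}}$ need not itself be realised as a transformation monoid on $\Omega$. Your concrete unrolling avoids this minor subtlety entirely.
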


Automatic continuity with respect to $\csemi$ and with respect to $\csecondsemi$ are distinct notions for Polish semigroups, and even for closed oligomorphic transformation monoids ~\cite[Proposition 5.18]{EJMMMP-zariski}.
\mic{Moreover, automatic continuity  does not imply automatic homeomorphicity with respect to $\csemi$ for Polish semigroups,  {see~\cite[Theorem 5.15 (vii)]{EJMMMP-zariski}.} The latter  contrasts the situation for Polish groups: by the Open Mapping Theorem~\cite[Theorem 1.2.6]{BeckerKechris}, any continuous surjective homomorphism between Polish groups is open. 
 It is not hard to see that the class of closed transformation monoids does not satisfy an analogous Open Mapping Theorem, see e.g.~\cite{grilj}; however, we do not know the following.}
\begin{question}\label{q:acandah} Is there $\cS\in\csemi$ with automatic continuity with respect to $\csemi$ but without automatic homeomorphicity with respect to $\csemi$? {If so, is there an oligomorphic example?} 
\end{question}

{Regarding the openness of clone homomorphisms, we have both a lifting result from semigroups as well as a result specific to clones.
The former implies that for isomorphisms from a transitive clone $\cC$, it is sufficient to check openness on its unary functions.}

\begin{theorem}[\cite{Reconstruction}] Let $\cC, \cD\in\cclone$. Let $\xi:\cC\to\cD$ be a clone homomorphism and let $\xi'$ be its restriction to unary functions.
\begin{itemize}
    \item Suppose $\cD$ is weakly directed. If $\xi'$ is continuous, then so is $\xi$;
    \item Suppose $\cC$ is weakly directed and $\xi$ is an isomorphism. If $\xi'$ is open, so is $\xi$.
\end{itemize}
\end{theorem}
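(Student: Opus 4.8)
The plan is to deduce the second bullet from the first by applying the latter to $\xi^{-1}$, and to prove the first bullet by reducing continuity of $\xi$ to continuity of $\xi'$ via the clone identities together with weak directedness of the target. For the deduction: if $\xi$ is a clone isomorphism then so is $\xi^{-1}\colon\cD\to\cC$, and $\xi^{-1}$ maps $\cD^{(1)}$ bijectively onto $\cC^{(1)}$ with restriction $(\xi')^{-1}$. Since $\xi$ is bijective, ``$\xi$ is open'' is the same statement as ``$\xi^{-1}$ is continuous'', and ``$\xi'$ is open'' the same as ``$(\xi')^{-1}$ is continuous''; so the first bullet, applied to $\xi^{-1}$ --- whose target $\cC$ is weakly directed by hypothesis and whose restriction to unary functions is the continuous map $(\xi')^{-1}$ --- is exactly the second bullet.

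To prove the first bullet, observe that since $\xi$ preserves arities and each $\cC^{(n)}$ is clopen in $\cC$ and each $\cD^{(n)}$ clopen in $\cD$, it suffices to show every restriction $\xi_n\colon\cC^{(n)}\to\cD^{(n)}$ is continuous; and as $\cD^{(n)}$ carries the subspace topology of $\Omega^{\Omega^n}$ with $\Omega$ discrete, this reduces to showing that for each $\bar a=(a_1,\dots,a_n)\in\Omega^n$ the map $f\mapsto\xi(f)(\bar a)$ is locally constant on $\cC^{(n)}$. I would then pick $c\in\Omega$ and $v_1,\dots,v_n\in\cC^{(1)}$ with $\xi'(v_i)(c)=a_i$ for every $i$ (this is the place where weak directedness of the target is invoked; see the next paragraph). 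Using $a_i=\xi(v_i)(c)$ together with the facts that $\xi$ fixes the projections and preserves composition, one computes
\[
\xi(f)(\bar a)=\bigl(\xi(f)\bigl(\xi(v_1),\dots,\xi(v_n)\bigr)\bigr)(c)=\xi\bigl(f(v_1,\dots,v_n)\bigr)(c)=\xi'\bigl(f(v_1,\dots,v_n)\bigr)(c),
\]
the last equality holding because $f(v_1,\dots,v_n)\in\cC^{(1)}$. Thus $f\mapsto\xi(f)(\bar a)$ factors as $f\mapsto f(v_1,\dots,v_n)$ (continuous, since $\cC$ with $\pw$ is a topological clone), followed by $\xi'$ (continuous by hypothesis), followed by evaluation at $c$ (continuous); hence it is continuous, finishing the argument.

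The step I expect to be the genuine obstacle is the realization of the tuple $\bar a$: one needs every $n$-tuple over $\Omega$ to arise as $\bigl(\xi'(v_1)(c),\dots,\xi'(v_n)(c)\bigr)$, i.e.\ weak directedness must be available inside the image $\xi(\cC)$ --- equivalently inside the submonoid $\xi'(\cC^{(1)})\leq\cD^{(1)}$ --- rather than merely inside $\cD$. Iterating weak directedness of $\cD$ by itself only produces a point $c$ and functions $g_i\in\cD^{(1)}$ with $g_i(c)=a_i$; these lie in $\xi(\cC)$ whenever $\xi$ is surjective --- in particular in the isomorphism case that powers the second bullet --- and for a general homomorphism this is exactly the content that the hypothesis must supply. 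Everything else --- the reduction to evaluation maps, the clone-homomorphism identities, and continuity of composition and of evaluation in a topological clone --- is routine bookkeeping.
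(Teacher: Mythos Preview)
The survey does not prove this theorem; it simply quotes it from \cite{Reconstruction}. So there is no in-paper proof to compare against, and I assess your argument on its own merits.

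Your reduction of the second item to the first is clean and correct: $\xi^{-1}$ is a clone homomorphism with target $\cC$ (weakly directed by hypothesis) and with continuous unary restriction $(\xi')^{-1}$, so the first item applied to $\xi^{-1}$ gives openness of $\xi$. The factorisation
\[
f\ \longmapsto\ f(v_1,\dots,v_n)\ \longmapsto\ \xi'\bigl(f(v_1,\dots,v_n)\bigr)\ \longmapsto\ \xi'\bigl(f(v_1,\dots,v_n)\bigr)(c)
\]
together with the clone identity $\xi(f)\bigl(\xi(v_1),\dots,\xi(v_n)\bigr)=\xi\bigl(f(v_1,\dots,v_n)\bigr)$ is also exactly the right mechanism for the first item.

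You have, however, correctly located a genuine gap in the first item for a general homomorphism. Your argument needs, for each $\bar a\in\Omega^n$, witnesses $c\in\Omega$ and $v_1,\dots,v_n\in\cC^{(1)}$ with $\xi'(v_i)(c)=a_i$; that is weak directedness of the \emph{image} $\xi'(\cC^{(1)})$ acting on $\Omega$, not of $\cD^{(1)}$. Weak directedness of $\cD$ only produces $g_i\in\cD^{(1)}$ with $g_i(c)=a_i$, and for non-surjective $\xi'$ there is no way to pull these back to $\cC^{(1)}$; the alternative route $\xi(f)(\bar a)=[\xi(f)(g_1,\dots,g_n)](c)$ is circular since it already presupposes continuity of $\xi$ on $n$-ary functions. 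So with this method the first item is established only when $\xi'$ is surjective --- precisely what is needed for the second item, where one applies the argument to the isomorphism $\xi^{-1}$ and the required witnesses come directly from weak directedness of $\cC$.

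In short: your proof of the second item is complete; your flagged obstacle in the first item is real, and the hypothesis ``$\cD$ weakly directed'' as recorded in the survey does not by itself supply the preimages your argument needs. It is worth checking the precise formulation in the original source.
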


{The following trick for clones is not available for monoids, since homomorphisms of the latter do not necessarily preserve constant functions:}

\begin{proposition}[{\cite{Reconstruction}}]\label{prop:constants} Let $\cC\in\cclone$ contain all constant functions. Then, any isomorphism $\xi:\cC\to\cD\in\cclone$ is open.
\end{proposition}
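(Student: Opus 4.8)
The plan is to exploit the fact that a clone containing all constant functions can recognise and name every single point of the domain purely algebraically, so that the action of $\cC$ on $\Omega$ is recoverable from the abstract clone structure. First I would fix, for each $a \in \Omega$, the constant unary function $c_a \in \cC$ with value $a$. The key algebraic observation is that these constants are exactly the \emph{left zeros} of the monoid of unary operations: $f \circ c_a = c_a$ for every unary $f$, and conversely any unary $g$ with $f\circ g$ depending only on... more precisely, $c_a$ is characterised by $c_a \ast c_a = c_a$ together with $g \ast c_a = c_a$ for all unary $g$; and among such idempotent left zeros the composite $f(c_{a_1},\dots,c_{a_n})$ equals $c_{f(a_1,\dots,a_n)}$. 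Hence an isomorphism $\xi$ permutes the set of constants, and this permutation is a bijection $\sigma : \Omega \to \Omega$ (from $\xi(c_a) = c_{\sigma(a)}$); moreover, reading off $\xi(f(c_{a_1},\dots,c_{a_n})) = \xi(f)(\xi(c_{a_1}),\dots,\xi(c_{a_n}))$ shows that for every $f \in \cC \cap \mathcal{O}^{(n)}$ and every tuple $\bar a \in \Omega^n$ we have $\xi(f)(\sigma(\bar a)) = \sigma(f(\bar a))$. In other words, $\xi$ is \textbf{induced by} the bijection $\sigma$.

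Once $\xi$ is known to be induced by a bijection $\sigma$ of $\Omega$, openness is immediate: the topology of pointwise convergence on $\cD$ has the subbasis $\{\cU_{(\bar b, b)} : \bar b \in \Omega^n, b\in\Omega\}$, and the computation above gives $\xi(\cU_{(\bar a, a)}) = \cU_{(\sigma(\bar a), \sigma(a))} \cap \cD$, since $f(\bar a) = a$ if and only if $\xi(f)(\sigma(\bar a)) = \sigma(a)$. As $\xi$ maps a subbasis of open sets to a subbasis of open sets (and is a bijection, so it commutes with arbitrary unions and respects the decomposition of $\mathcal O$ into arities, which $\sigma$ clearly also does), $\xi$ is an open map. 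Note this argument simultaneously shows $\xi$ is continuous, so in fact $\xi$ is a homeomorphism; but the statement only asks for openness.

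The main obstacle, and the only place that needs genuine care, is the purely algebraic characterisation of the constants and of the assignment $a \mapsto c_a$ inside $\cC$: we must verify that there is a first-order (or rather, clone-theoretic) property, preserved by any clone isomorphism, that pins down the constants and the way composition acts on them, using \emph{only} that $\cC$ contains all constants and the projections and is closed under composition — in particular without assuming anything about $\cD$ beyond its being a closed clone. Concretely one checks: (i) $c \in \cC \cap \mathcal O^{(1)}$ is a constant iff $c \ast g = c$ for all unary $g \in \cC$ (here one uses that $\cC$ contains all constants to rule out degenerate monoids where this could fail — e.g. the trivial clone where it also holds vacuously is excluded because that clone does not contain all constants unless $|\Omega|=1$, a case that is trivial anyway); (ii) distinct constants are distinguished because $c_a \ne c_b$ as functions; (iii) the composite identity $f(c_{a_1},\dots,c_{a_n}) = c_{f(a_1,\dots,a_n)}$ holds in $\mathcal O$ and hence in $\cC$. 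Each of these is routine once stated correctly, but getting clause (i) right — so that it transfers across $\xi$ to force $\xi(c_a)$ to be a constant of $\cD$ — is where one should be precise; I would phrase it as "$c$ is constant iff for every unary $g \in \cC$ and every $h \in \cC$, $h(c,\dots,c) = h(c,g,c,\dots,c) = \cdots$", i.e. $c$ absorbs substitution in any coordinate, which is manifestly an equational condition preserved by $\xi$.
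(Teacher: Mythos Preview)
Your overall strategy is the standard one and is correct: show that $\xi$ sends constants to constants, extract a bijection $\sigma:\Omega\to\Omega$ via $\xi(c_a)=c_{\sigma(a)}$, verify the intertwining identity $\xi(f)(\sigma(\bar a))=\sigma(f(\bar a))$, and read off that $\xi$ sends subbasic opens to subbasic opens. The paper does not spell out a proof (it only cites the source), but this is indeed the intended argument.

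However, the crucial step---forcing $\xi(c_a)$ to be a constant function in $\cD$---is not established by your argument. Your first characterisation, ``$f\circ c_a = c_a$ for every unary $f$'', has the composition reversed (what holds is $c_a\circ f = c_a$). Your corrected version, ``$c$ is a left zero: $c\ast g = c$ for all unary $g$'', correctly identifies the constants inside $\cC^{(1)}$, but it is a purely \emph{monoid}-theoretic condition and does \emph{not} force $\xi(c_a)$ to be constant in $\cD$: there exist closed transformation monoids with non-constant left zeros (e.g.\ on $\{0,1,2,3\}$ the monoid $\{\mathrm{id},d_1,d_2\}$ with $d_1=(0,0,2,2)$, $d_2=(0,0,3,3)$ has two non-constant left zeros, and is abstractly isomorphic to $\{\mathrm{id},c_0,c_1\}$ on $\{0,1\}$). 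Your final ``absorbing'' condition, $h(c,\dots,c)=h(c,g,c,\dots,c)$ for all $h,g$, is actually false for constants: take $h=\pi_2^2$ and any non-constant $g$.

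The fix is to use the \emph{clone} structure, namely that clone isomorphisms fix projections. A unary $c$ is constant if and only if $c\circ\pi_1^2 = c\circ\pi_2^2$; this equation is preserved by $\xi$, so $\xi(c_a)\circ\pi_1^2=\xi(c_a)\circ\pi_2^2$ in $\cD$, i.e.\ $\xi(c_a)(x)=\xi(c_a)(y)$ for all $x,y$, and $\xi(c_a)$ is constant. (The paper even records this characterisation elsewhere, and explicitly remarks that this trick ``is not available for monoids, since homomorphisms of the latter do not necessarily preserve constant functions''---which is exactly why your monoid-level left-zero argument cannot close the gap on its own.) With this repair, the rest of your proof goes through as written.
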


\section{Rubin's dream: stronger notions of reconstruction}

The group $\somega$ acts continuously on $\Omega^\Omega$ by conjugation: $(\theta,f)\mapsto f^\theta:=\theta f \theta^{-1}$ for  $\theta\in\somega$ and  $f\in\omom$. Hence, for any closed subsemigroup ${S}$ of $\omom$ and any $\theta\in\somega$, we obtain a new closed subsemigroup ${S}^\theta:=\{f^\theta\;|\; f\in{S}\}$ which is topologically isomorphic to it by fixing $\theta$ in the above map. We call such homeomorphisms \textbf{set-induced}. This idea leads to a stronger notion of reconstruction:

\begin{definition} Let $\cS\in\ctcl$ and $\classC\subseteq\ctcl $. We say that $\cS$ has \textbf{automatic action compatibility} with respect to $\classC$ if  any algebraic isomorphism from $\cS$ onto an element of $\classC$ is set-induced.
\end{definition}
\noindent Again, the analogous notion can be defined also in the context of permutation groups and clones. In the latter case, the action on $\Omega^{\Omega^n}$ is \[(\theta,f)\mapsto f^\theta(x_1,\ldots,x_n):=\theta f (\theta^{-1}(x_1),\ldots,\theta^{-1}(x_n))\;.\]
These notions have a structural interpretation: we call structures $\sA,\sB$ on $\Omega$ \textbf{(fo/e/ep/pp)-bi-definable} if they are (fo/e/ep/pp)-bi-interpretable with $d=1$ and a bijection $I\colon \Omega\to\Omega$ (using the notation from~Definition~\ref{def:interpretation}). In the situation where they are ep-bi-definable, for $\theta:=I$ we have $\End(\sB)=\End(\sA)^\theta$. Analogous statements hold for the other notions of definability. Moreover, for all of these notions the converse (i.e.~the existence of a permutation $\theta$ moving one space of symmetries onto the other by conjugation) holds in the oligomorphic case. This follows from the theorem of Ryll-Nardzewski, Engeler, and Svenonius for automorphism groups, is due to~\cite{BodirskyNesetrilJLC} for pp-bi-definability, and is  folklore for the other cases.

For the above form of reconstruction to hold, we need to exclude algebraicity: for any oligomorphic permutation group $G$ we can construct a topologically isomorphic oligomorphic $G'$ whose action is not isomorphic to that of $G$ by adding a fixed point. Thus, the interesting classes are $\cnagroup,\cnaeemb, \cnasemi,$ and $\cnaclone$: spaces of symmetries of $\omega$-categorical structures without algebraicity. 
In this context, we already have the following consequence of Theorem~\ref{thm:zariskipinskschind} regarding automatic homeomorphicity:


\begin{corollary}\label{cor:ahfromzariski} Let $\overline{G}\in\cnaeemb$. Then, $\overline{G}$ has automatic homeomorphicity with respect to $\cnaeemb$.
\end{corollary}
\begin{proof}
By Theorem~\ref{thm:zariskipinskschind},$\zar=\pw$ for any structure in $\cnaeemb$, yielding that any isomorphism between two structures in the class is also topological.
\end{proof}

\mic{However, much more is true. Namely, it is a result of~\cite{Rubin} that any fo-bi-interpretation of $\sB$ in $\sA$, where $\sA,\sB$ are $\omega$-categorical structures without algebraicity, 
{gives rise to} a bi-definition: this is essentially Lemma~2.11 in~\cite{Rubin}. In some of the literature on reconstruction one gets the impression that this fact has been overlooked, perhaps due to the complex mathematical language used in Rubin's paper and the belief that the statement holds only in the presence of a weak $\forall\exists$-interpretation (the main topic of  Rubin's article). Be that as it may, the following holds by analysing Rubin's proof, see~\cite{RomanMichael}:}

\begin{theorem}[{\cite{Rubin}}]\label{thm:bidefinition} 
\mic{Let $G, H\in \cnagroup$.
Then any topological isomorphism from $G$ onto $H$ is set-induced. The same statement holds in $\cnaeemb$.}
\end{theorem}

\mic{From the proof given in~\cite{RomanMichael},  Theorem~\ref{thm:bidefinition} straightforwardly extends to elements of $\cnasemi$ whose finite-to-one functions are dense, and to transitive {model-complete core clones in $\cnaclone$.}
However, whether Theorem~\ref{thm:bidefinition} holds for  $\cnasemi$ or $\cnaclone$ with no further assumptions remains an intriguing open problem:}
\begin{question} Are topological isomorphisms between members of $\cnasemi$ always set-induced? What about $\cnaclone$?
\end{question}

\mic{Theorem~\ref{thm:bidefinition} has, in the case of groups, been exploited to show that the equivalence relation of topological isomorphism on the class $\cnagroup$ of closed oligomorphic groups without algebraicity is \textbf{smooth}~\cite{paolini2024two}. Similar ideas are used in~\cite{RomanMichael} to prove smoothness of topological isomorphism on $\cnaeemb$ and for the class of transitive clones in $\cnaclone$ whose unary part is in $\cnaeemb$.}

{As a consequence of Theorem~\ref{thm:bidefinition}, automatic homeomorphicity and automatic action compatibility coincide in the context of $\cnaeemb$. Together with Corollary~\ref{cor:ahfromzariski} this yields:}

\begin{corollary}\label{cor:aacclosures} Let $\overline{G}\in\cnaeemb$. Then, $\overline{G}$ has automatic action compatibility with respect to $\cnaeemb$.
\end{corollary}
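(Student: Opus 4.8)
The plan is to chain together the two facts that immediately precede Corollary~\ref{cor:aacclosures} in the text. First I would recall the definition: $\overline{G}$ has automatic action compatibility with respect to $\cnaeemb$ if every algebraic isomorphism from $\overline{G}$ onto some $\overline{H}\in\cnaeemb$ is set-induced, i.e.\ is of the form $f\mapsto \theta f\theta^{-1}$ for a fixed bijection $\theta$ of $\Omega$. So let $\alpha\colon \overline{G}\to\overline{H}$ be an arbitrary algebraic isomorphism with $\overline{H}\in\cnaeemb$.

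The core of the argument is to promote $\alpha$ to a \emph{topological} isomorphism and then invoke Theorem~\ref{thm:bidefinition}. For the promotion step, I would apply Corollary~\ref{cor:ahfromzariski}: since $\overline{G}\in\cnaeemb$ has automatic homeomorphicity with respect to $\cnaeemb$, the algebraic isomorphism $\alpha$ is automatically a homeomorphism. (Equivalently, one can cite Theorem~\ref{thm:zariskipinskschind} directly: both $\overline{G}$ and $\overline{H}$ are the endomorphism monoids of elementary embeddings of $\omega$-categorical structures without algebraicity, these are model-complete cores hence have a mobile core with infinite model-complete core without algebraicity, so $\zar=\pw$ on each; as $\alpha$ is a homeomorphism for $\zar$ by pure algebra, it is a homeomorphism for $\pw$.) Either way we conclude $\alpha$ is a topological isomorphism.

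Now Theorem~\ref{thm:bidefinition}, applied with $\mathcal S=\overline{G}$ and $\mathcal T=\overline{H}$ (note $\cnaeemb\subseteq\cnasemi$, so both lie in the class to which the theorem applies), tells us that any topological isomorphism from $\overline{G}$ onto $\overline{H}$ is set-induced. Hence $\alpha$ is set-induced, which is exactly the assertion of Corollary~\ref{cor:aacclosures}. This is the two-line proof sketch the paper alludes to in the sentence immediately before the statement.

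The one point that needs care — and which I would expect to be the only real obstacle — is making sure the hypotheses of Theorem~\ref{thm:bidefinition} and of Corollary~\ref{cor:ahfromzariski} genuinely cover $\cnaeemb$: one must check that an element of $\cnaeemb$, being $\overline{G}$ for $G$ oligomorphic without algebraicity, really is (isomorphic to) the endomorphism monoid of elementary self-embeddings of an $\omega$-categorical structure without algebraicity, and that $\overline{H}$ inherits ``no algebraicity'' from membership in $\cnaeemb$ rather than having to be deduced. Both are built into the definition of the class $\cnaeemb$ in the excerpt, so the verification is routine; the substance of the corollary is entirely carried by Corollary~\ref{cor:ahfromzariski} and Theorem~\ref{thm:bidefinition}, and the proof is essentially just ``combine the previous two results.''
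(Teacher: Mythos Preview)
Your proposal is correct and matches the paper's own argument exactly: the paper states just before the corollary that Theorem~\ref{thm:bidefinition} makes automatic homeomorphicity and automatic action compatibility coincide in $\cnaeemb$, and then combines this with Corollary~\ref{cor:ahfromzariski}. Your write-up simply unpacks this two-line sketch.
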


Some specific cases of Corollary~\ref{cor:aacclosures} were proven in~\cite{behrisch-vargasgarcia-stronger}. The following proposition allows to lift being set-induced from semigroup homomorphisms to clone homomorphisms.

\begin{proposition}[{\cite[Theorem 4.3]{behrisch-vargasgarcia-stronger}}] Let $\cC, \cD\in\cclone$ with $\cC$ being weakly directed. Let $\xi:\cC\to\cD$ be a surjective clone homomorphism and let $\xi'$ be its restriction to unary functions. If $\xi'$ is set-induced, then so is $\xi$.
\end{proposition}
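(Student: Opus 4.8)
The plan is to verify that the very permutation $\theta\in S_\Omega$ which witnesses that $\xi'$ is set-induced also witnesses that $\xi$ is: fixing $\theta$ with $\xi'(g)=\theta g\theta^{-1}$ for every unary $g\in\cC$, the goal is to show $\xi(f)=f^\theta$ for every $f\in\cC$, of arbitrary arity $n$. First I would establish an iterated version of weak directedness: by induction on $n$, using that $\cC$ contains the unary projection (i.e.\ the identity) for the base case and a single application of weak directedness at each step — if $c'$ is a common preimage of $a_1,\dots,a_{n-1}$ via unary $g_1,\dots,g_{n-1}\in\cC$, apply weak directedness to $c'$ and $a_n$ to get $h,k\in\cC$ and $d$ with $h(d)=c'$, $k(d)=a_n$, then $d$ is a common preimage of $a_1,\dots,a_n$ via $g_1\circ h,\dots,g_{n-1}\circ h,k$ — one obtains that for every $(a_1,\dots,a_n)\in\Omega^n$ there are $c\in\Omega$ and unary $g_1,\dots,g_n\in\cC$ with $g_i(c)=a_i$ for all $i$.

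The main step then fixes an $n$-ary $f\in\cC$ and an arbitrary tuple $(a_1,\dots,a_n)$, chooses $c$ and $g_1,\dots,g_n$ as above, and considers the \emph{unary} function $u:=f(g_1,\dots,g_n)\in\cC$ formed by clone composition, which satisfies $u(c)=f(a_1,\dots,a_n)$. Applying the clone homomorphism $\xi$ gives $\xi(u)=\xi(f)(\xi'(g_1),\dots,\xi'(g_n))$ (using that $u$ and the $g_i$ are unary, so $\xi$ and $\xi'$ agree on them). Evaluating $\xi(u)=\xi'(u)=\theta u\theta^{-1}$ at the point $\theta(c)$ yields, on one hand, $\theta\,u(c)=\theta f(a_1,\dots,a_n)$, and on the other hand $\xi(f)\big(\xi'(g_1)(\theta c),\dots,\xi'(g_n)(\theta c)\big)=\xi(f)\big(\theta g_1(c),\dots,\theta g_n(c)\big)=\xi(f)(\theta a_1,\dots,\theta a_n)$. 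Hence $\xi(f)(\theta a_1,\dots,\theta a_n)=\theta f(a_1,\dots,a_n)$ for every tuple; since $\theta$ is a bijection of $\Omega$, substituting $b_i=\theta a_i$ gives $\xi(f)(b_1,\dots,b_n)=\theta f(\theta^{-1}b_1,\dots,\theta^{-1}b_n)=f^\theta(b_1,\dots,b_n)$ for all $(b_1,\dots,b_n)\in\Omega^n$, i.e.\ $\xi(f)=f^\theta$. Surjectivity of $\xi$ then gives $\cD=\cC^\theta$, so $\xi$ is set-induced.

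All of this is routine bookkeeping with clone composition and conjugation by $\theta$; the only place requiring a genuine (if short) argument is the iterated weak directedness, i.e.\ producing a single common preimage $c$ of the whole tuple under unary functions of $\cC$. That is precisely where the weak directedness hypothesis on $\cC$ enters, and the only use of the clone structure beyond the bare homomorphism property (the presence of the identity among the unary projections, and the ability to pre-compose unary functions). I do not expect any real obstacle.
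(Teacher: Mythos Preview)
The paper does not give its own proof of this proposition; it is simply quoted from \cite{behrisch-vargasgarcia-stronger}. Your argument is correct and is exactly the expected one: iterate weak directedness to produce, for each tuple $(a_1,\dots,a_n)$, a single point $c$ and unary $g_1,\dots,g_n\in\cC$ with $g_i(c)=a_i$; then use preservation of composition to compare $\xi(f(g_1,\dots,g_n))$ and $\xi(f)(\xi'(g_1),\dots,\xi'(g_n))$ at $\theta(c)$. The conclusion $\xi(f)=f^\theta$ follows, and surjectivity yields $\cD=\cC^\theta$. There is nothing to add.
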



We can thus extend Corollary~\ref{cor:aacclosures} to transitive clones.
\begin{corollary} 
Let $\cC,\cD\in\cnaclone$ be   model-complete core clones. If $\cC$ is transitive, then any clone isomorphism $\xi:\cC\to\cD$ is set-induced.

\end{corollary}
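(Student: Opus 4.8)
The plan is to chain together the results developed in this section. Let $\cC,\cD\in\cnaclone$ be model-complete core clones with $\cC$ transitive, and let $\xi\colon\cC\to\cD$ be a clone isomorphism. First I would pass to the unary parts: write $\cS$ for the monoid of unary operations of $\cC$ and $\cT$ for that of $\cD$, and let $\xi'$ be the restriction of $\xi$ to unary functions. Since $\xi$ is a clone isomorphism it preserves arities and composition, so $\xi'$ is a monoid isomorphism $\cS\to\cT$. Because $\cC$ and $\cD$ are model-complete core clones without algebraicity, $\cS,\cT\in\cnaeemb$ (their monoids of unary functions are model-complete cores, i.e.\ have a dense set of invertibles, and inherit no algebraicity).

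Next I would invoke Corollary~\ref{cor:aacclosures}: since $\cS,\cT\in\cnaeemb$, the monoid $\cS$ has automatic action compatibility with respect to $\cnaeemb$, so the algebraic isomorphism $\xi'\colon\cS\to\cT$ is set-induced; that is, there is a permutation $\theta\in\somega$ such that $\xi'(f)=\theta f\theta^{-1}$ for all $f\in\cS$. (Strictly, Corollary~\ref{cor:aacclosures} is stated as automatic homeomorphicity combined with Theorem~\ref{thm:bidefinition}, but the upshot recorded there is precisely that any algebraic isomorphism in $\cnaeemb$ is set-induced.)

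Finally I would apply the lifting proposition of~\cite{behrisch-vargasgarcia-stronger} (the Proposition immediately preceding this corollary): $\cC$ is weakly directed because it is transitive (a transitive permutation group inside the unary part already witnesses weak directedness), $\xi\colon\cC\to\cD$ is a surjective clone homomorphism, and its restriction $\xi'$ to unary functions is set-induced by the previous step. Hence $\xi$ itself is set-induced, with the same conjugating permutation $\theta$, which is exactly the claim.

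The only genuinely delicate point is the verification that $\cS$ and $\cT$ lie in $\cnaeemb$: one must check that the monoid of unary functions of a model-complete core clone is a closed oligomorphic transformation monoid with a dense set of invertibles, and that ``no algebraicity'' of the clone passes to its unary part. Both are immediate from the definitions given in the preliminaries (a model-complete core clone is by definition one whose unary part is a model-complete core, i.e.\ oligomorphic with dense invertibles, and algebraic closure is computed from the underlying group action, which is the same in $\cC$ and in $\cS$), so there is no real obstacle; everything else is a direct concatenation of cited results.
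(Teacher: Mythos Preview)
Your proposal is correct and matches the paper's intended argument: the paper states the corollary immediately after Corollary~\ref{cor:aacclosures} and the lifting proposition of~\cite{behrisch-vargasgarcia-stronger}, with only the sentence ``We can thus extend Corollary~\ref{cor:aacclosures} to transitive clones'' in lieu of a proof, and your chaining (restrict to unary parts, apply Corollary~\ref{cor:aacclosures}, then lift via weak directedness) is precisely what is meant. One minor remark: Corollary~\ref{cor:aacclosures} already states automatic action compatibility directly, so your parenthetical about it being ``stated as automatic homeomorphicity combined with Theorem~\ref{thm:bidefinition}'' is unnecessary --- you can invoke it as is.
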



\section{Final remarks on clones}

In this final section, we summarise some of the literature on  reconstruction for polymorphism clones mentioned throughout this survey and list some final questions on the topic. Current techniques for clones usually rely on lifting reconstruction properties from the underlying group or monoid. Whilst this was originally the case also for the study of  reconstruction for monoids, now we have several powerful direct techniques. We hope that the study of clones will develop in a similar direction. 
The original ``gate covering'' techniques of~\cite{Reconstruction} relied on lifting to prove automatic homeomorphicity with respect to $\cclone$ of the Horn clone, the polymorphism clone of the random graph, and all minimal closed clones above the automorphism group of the random graph. Meanwhile, ~\cite{behrisch-truss-vargas, truss2021reconstructing} use Proposition~\ref{prop:constants} and other lifting tricks to deduce automatic continuity of the polymorphism clones of  $\pau{(\mathbb{Q}; \leq)}$ \mic{and some structures first-order definable in it.}
Although Property~$\mathrm{X}$ for $\cS_{\Pol(\sA)}$ can be used to deduce UPP and automatic continuity for $\Pol(\sA)$~\cite{EJMMMP-zariski, EJMPP}, current proofs of UPP do rely on lifting minimality of $\pw$ using Lemma~\ref{lem:liftmini}, and automatic continuity still relies on lifting from $\Aut(\sA)$. {The only existing technique that seems to not rely on lifting is~\cite[Theorem 4.1]{PechPechHomeo}, which 
utilises gate coverings to show automatic homeomorphicity with respect to $\cclone$ for the polymorphism clones of various homogeneous structures, such as the generic (non-strict) poset.}\\


{One challenge 
{we face} is that current techniques often rely on specific structural properties such as particular forms of amalgamation. However,  most structures do not enjoy these properties themselves, although in many cases they are first-order definable, or interpretable, in  another structure that does. For example, amongst the continuum  of closed clones  whose underlying permutation group is $S_\Omega$ (presented in ~\cite{BodChenPinsker}), we only know automatic homeomorphicity for those containing the full monoid $\pau{O}^{(1)}$, for the Horn clone~\cite{Reconstruction}, and for $\mic{\cclosure{S_\Omega}}$~\cite{behrisch-truss-vargas}.

\begin{question} {Does the polymorphism clone of every structure which is first-order definable in $(\mathbb{N}, =)$ have automatic homeomorphicity with respect to $\cclone$? In general, are there conditions which guarantee that automatic homeomorphicity of $\End(\sA)$ (or $\Pol(\sA)$) for a structure $\sA$ \pau{implies}
the same for the structures first-order definable or interpretable in it?}
\end{question}
{Note that Theorem~\ref{thm:ACdoesnotimplyAH} shows that some non-trivial conditions must be imposed. We are not aware of any model-complete core clone without automatic homeomorphicity whose monoid of unary functions has automatic homeomorphicity.}\\

{Of particular interest, especially in applications to Constraint Satisfaction~\cite{Book, pinsker2022current}, is when an $\omega$-categorical structure $\sA$ pp-interprets all finite structures. This is the case if and only if there is a continuous clone homomorphism from $\Pol(\sA)$ to the clone $\cP_{\{0,1\}}$ of projections on a two-element set. Naturally, one then wishes to know whether all such homomorphisms must be continuous, or whether at least the existence of a homomorphism implies the existence of a continuous one. {A structure first-order definable in the Cherlin-Hrushovski structure provides a counterexample to the former statement~\cite{BPP-projective-homomorphisms}. However, the proof requires the ultrafilter lemma, and the structure is only homogenisable in an infinite language. } 
The truth of the latter statement is unknown, cf.~\cite[Questions~7.3~\&~7.4]{BPP-projective-homomorphisms}.}
\begin{question} 
{
Let $\cC\in\cpol$ have a clone homomorphism to ${P}_{\{0,1\}}$. Does it then always also have a continuous such homomorphism? Can the existence of a discontinuous homomorphism from some $\cC\in\cpol$ to $\cP_{\{0,1\}}$ be proven in ZF? Can it be proven for the polymorphism clone of a structure which is first-order definable in a finitely homogeneous structure?
}
\end{question}

{A positive answer to the first question is made plausible by the result from~\cite[Theorem 1.6]{Topo} that for an $\omega$-categorical \pau{model-complete} core $\mic{\mathbb{A}}$, 
$\Pol(\sA,c_1,\ldots,c_m)$  has a continuous homomorphism to $\cP_{\{0,1\}}$ for some constants $\pau{c_1, \dots, c_m}$ if and only if $\Pol(\sA, d_1, \dots, d_n)$ has a homomorphism to $\cP_{\{0,1\}}$ for some constants $\pau{d_1, \dots, d_n}$.}\\





\textbf{Acknowledgement:}
We thank Luna Elliott, Roman Feller, \mic{and Mira Tartarotti} for helpful comments. 
Funded by the European Union (ERC, POCOCOP, 101071674). Views and opinions expressed are however those of the authors only and do not necessarily reflect those of the European Union or the European Research Council Executive Agency. Neither the European Union nor the granting authority can be held responsible for them. This research was funded in whole or in part by the Austrian Science Fund (FWF) [I 5948]. For the purpose
of Open Access, the authors have applied a CC BY public copyright licence to any Author Accepted Manuscript (AAM) version arising from this submission.

%


 \printbibliography

\end{document}